\CompileMatrices\SelectTips{cm}{12}
\theoremstyle{plain}
\newtheorem{Thm}{\sc Theorem}[section]
\newtheorem{Theorem}[Thm]{\sc Theorem}
\newtheorem{Corollary}[Thm]{\sc Corollary}
\newtheorem*{Corollary*}{\sc Corollary}
\newtheorem{Proposition}[Thm]{\sc Proposition}
\newtheorem*{Proposition*}{\sc Proposition}
\newtheorem{Lemma}[Thm]{\sc Lemma}
\theoremstyle{definition}
\theoremstyle{remark}
\newtheorem*{Example*}{Example}
\newtheorem*{Remark*}{Remark}
\renewcommand{\L}{{\cal L}}
\newcommand{\M}{{\cal M}}
\renewcommand{\O}{{\cal O}}
\newcommand{\db}{{{\rm D}}}
\newcommand{\Db}{{{\cal D}}}
\newcommand{\Diff}{{\cal D}}
\newcommand{\Spin}{\mathop{\rm Spin}}
\newcommand{\SO}{\mathop{\rm SO}}
\newcommand{\PP}{\mathbb{P}}
\newcommand{\ZZ}{\mathbb{Z}}
\renewcommand{\char}{\mathop{\rm char}}
\newcommand{\Coh}{\mathop{{\rm Coh}}}
\newcommand{\ext}{\mathop{{\rm Ext}}}
\newcommand{\Hom}{{\mathop{{\cal H}om}}}
\renewcommand{\hom}{{\mathop{\rm Hom}}}
\newcommand{\Proj}{ {\mathop{\rm Proj}}}
\newcommand{\rk}{\mathop{\rm rk}}
\newcommand{\SL}{\mathop{\rm SL}}
\newcommand{\ti}{\tilde}
\title{D-affinity and Frobenius morphism on quadrics}
\author{Adrian Langer}
\begin{document}

%%%
\maketitle
%%%

{\sc Address:}
Institute of Mathematics, Warsaw University, Ul.\ Banacha 2,
PL-02-097 Warszawa, Poland, e-mail: {\tt alan@mimuw.edu.pl}

\bigskip

{\sc Abstract:}
We compute decomposition of Frobenius push-forwards of line bundles on
quadrics into a direct sum of line bundles and spinor bundles. As an
application we show when the Frobenius push-forward gives a tilting
bundle and we apply it to study D-modules on quadrics.

%{Key words: Frobenius morphism, quadrics, spinor bundles}

\section*{Introduction}

Let $X$ be a smooth projective variety.  A coherent sheaf $E\in \Coh
X$ is called \emph{quasi-exceptional} if $\ext ^i(E,E) =0$ for all
$i>0$. A coherent sheaf $E\in \Coh X$ is called \emph{tilting} if it
is quasi-exceptional, $E$ Karoubian generates the derived category
$D^b(X)$ and the algebra $\hom _X(E, E)$ has finite global dimension.

Let $Q_n$ be an $n$-dimensional quadric defined over an algebraically
closed field $k$ of characteristic $p>2$. Assume for simplicity that
$n\ge 3$.  Let $F: Q_n\to Q_n$ be the absolute Frobenius morphism and
let $F^s$ be the composition of $s$ absolute Frobenius morphisms.

\begin{Theorem}\label{main}
Let $s$ be a positive integer. Then $F^s_*\O_{Q_n}$ is a tilting
bundle if and only if one of the following holds:
\begin{enumerate}
\item $s=1$ and $p>n$,
\item $s=2$, $n=4$ and $p=3$,
\item $s\ge 2$, $n$ is odd and $p\ge n$.
\end{enumerate}
\end{Theorem}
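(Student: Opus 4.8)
\medskip
\noindent\textbf{Proof plan.}\quad
The plan is to derive the theorem from the explicit decomposition of $F^s_*\O_{Q_n}$ established above, using the known cohomology of line bundles and spinor bundles on $Q_n$ together with the elementary observation that whether a bundle is tilting depends only on the \emph{set} of its indecomposable direct summands, not on their multiplicities (for quasi-exceptionality this is immediate, since $\ext^i(\bigoplus M_j\otimes E_j,\bigoplus M_k\otimes E_k)=\bigoplus \hom(M_j,M_k)\otimes\ext^i(E_j,E_k)$; for Karoubian generation it is clear; for finite global dimension it follows because replacing a bundle by its ``basic'' version is a Morita equivalence on endomorphism algebras).

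First I would write the decomposition as
\[
F^s_*\O_{Q_n}\;\cong\;\bigoplus_{a\in I_s}\O_{Q_n}(a)^{\oplus c_a}\;\oplus\;\bigoplus_{b\in J_s}\Sigma_\bullet(b)^{\oplus d_b},
\]
with all $c_a,d_b>0$, with $I_s,J_s$ the explicit integer intervals produced by the decomposition theorem (in terms of $n$ and $p^s$), and with $\Sigma_\bullet=\Sigma$ the spinor bundle for $n$ odd, resp.\ one of the half-spinor bundles $\Sigma^{\pm}$ for $n$ even. Since $\ext^i(F^s_*\O_{Q_n},F^s_*\O_{Q_n})$ is the direct sum of the $\ext^i(E,E')$ over all ordered pairs of summands, and $\ext^i(\O(a),\O(a'))=H^i(\O(a'-a))$, $\ext^i(\O(a),\Sigma_\bullet(b))=H^i(\Sigma_\bullet(b-a))$, $\ext^i(\Sigma_\bullet(b),\O(a))=H^i(\Sigma_\bullet^{\vee}(a-b))$ and $\ext^i(\Sigma_\bullet(b),\Sigma_{\bullet'}(b'))=H^i(\Sigma_\bullet^{\vee}\otimes\Sigma_{\bullet'}(b'-b))$, the quasi-exceptionality of $F^s_*\O_{Q_n}$ reduces to a finite list of cohomology vanishings for these four families of bundles, at the twists occurring as differences of elements of $I_s\cup J_s$.

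Next I would run these vanishings. Using $H^{>0}(Q_n,\O(m))=0\iff m\ge -n+1$, the Bott-type vanishing windows for $\Sigma_\bullet(m)$ and $\Sigma_\bullet^{\vee}(m)$, and the $\SO$-equivariant splitting of $\Sigma_\bullet^{\vee}\otimes\Sigma_{\bullet'}$ into bundles whose cohomology is again governed by Bott's theorem, I expect to find that $\ext^{>0}(F^s_*\O_{Q_n},F^s_*\O_{Q_n})=0$ holds exactly when $I_s$ has length $\le n-1$ and $J_s$ lies inside a prescribed window relative to $I_s$ (for $n$ even one also needs $\Sigma^{+}$ and $\Sigma^{-}$ to occur with compatible twists). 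Separately, if $F^s_*\O_{Q_n}$ is tilting it Karoubi-generates $D^b(Q_n)$, so its distinct summands span $K_0(Q_n)\otimes\QQ$, of rank $n+1$ ($n$ odd) or $n+2$ ($n$ even); this forces $I_s$ to contain $n$ consecutive integers and $J_s$ to supply the remaining spinor class(es), so in fact $I_s$ has length exactly $n-1$ and $J_s$ is pinned down. Feeding the explicit $I_s,J_s,c_a,d_b$ into these two requirements, one checks they are simultaneously met precisely in cases (1)--(3); outside the list either $\ext^{>0}(F^s_*\O_{Q_n},F^s_*\O_{Q_n})\neq0$, or $I_s$ is too short (resp.\ a half-spinor bundle is absent or mistwisted) and generation fails. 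In cases (1)--(3), the same explicit description shows that the indecomposable summands of $F^s_*\O_{Q_n}$ are, up to a single common twist, exactly those of Kapranov's tilting bundle on $Q_n$ (the $\O(i)$, $0\le i\le n-1$, together with $\Sigma$ for $n$ odd, resp.\ $\Sigma^{\pm}$ for $n$ even), which is a tilting bundle since $\char k\neq2$; by the opening remark $F^s_*\O_{Q_n}$ is then tilting as well.

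The hard part will be the cohomology bookkeeping of the third step: tracking $H^{\bullet}(Q_n,\Sigma_\bullet^{\vee}\otimes\Sigma_{\bullet'}(m))$ across all twists $m$ that occur, and, for $n$ even, recording which half-spinor bundle appears with which twist in $F^s_*\O_{Q_n}$. This is what should produce the sharp thresholds ($p>n$ when $s=1$, versus only $p\ge n$ when $s\ge2$ and $n$ odd, with the summand set stabilising from $s=2$ onwards) and single out the borderline case $n=4$, $p=3$, $s=2$, which I would verify by a direct computation using the isomorphism $Q_4\cong\Gr(2,4)$.
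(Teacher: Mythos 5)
Your overall strategy --- read off the indecomposable summands of $F^s_*\O_{Q_n}$ and then do the Ext and $K_0$ bookkeeping --- is the same as the paper's (Corollaries \ref{t1}--\ref{t3}), and the reductions you state (multiplicities irrelevant, quasi-exceptionality reduces to cohomology of $\O(m)$, $\Sigma_\bullet(m)$ and $\Sigma_\bullet^{\vee}\otimes\Sigma_{\bullet'}(m)$, generation forces the summands to hit all of $K_0$) are all sound. But there is a genuine gap for $s\ge 2$: you treat the decomposition of $F^s_*\O_{Q_n}$, with its explicit sets $I_s$ and $J_s$, as already available. The paper only computes the full decomposition for $s=1$ (Theorem \ref{decomposition}); for $s\ge 2$ it explicitly does \emph{not} compute $J_s$, and instead iterates $F^s_*=F_*\circ F^{s-1}_*$, which forces one to understand the Frobenius push-forward of a \emph{spinor} bundle, not just of line bundles. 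That is Proposition \ref{splitting2}(2) (which spinor twists occur in $F_*(S_n(j))$), and it rests on the separate vanishing/non-vanishing theorems for $H^i(\Sigma_1\otimes F^*\Sigma_2(t))$ (Theorem \ref{2-vanishing}, Corollary \ref{2-non-vanishing}). Nothing in your plan addresses $F_*$ of spinor bundles, so you cannot show that for $n$ odd and $p\ge n$ the summand set stabilises at $\{\O,\dots,\O(-n+1),\Sigma(-m)\}$, nor exhibit the two spinor twists $\Sigma(-a),\Sigma(-b)$ with $0<|a-b|<n$ whose nonzero $\ext^{|a-b|}$ kills quasi-exceptionality when $p<n$, or when $n$ is even and $s\ge 3$. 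This is precisely where the sharp thresholds and the exceptional case $(n,p,s)=(4,3,2)$ come from.

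A second, smaller concern: for the ``hard part'' you propose to compute $H^\bullet(Q_n,\Sigma_\bullet^{\vee}\otimes\Sigma_{\bullet'}(m))$ via an $\SO$-equivariant splitting governed by Bott's theorem. In characteristic $p$ the Borel--Weil--Bott theorem fails and tensor products of spin representations need not be semisimple, so this route is not available as stated. The paper instead gets these groups (Lemma \ref{tensor-spin}) from the matrix-factorization exact sequences (\ref{(i)})--(\ref{(iii)}) together with strong slope stability of spinor bundles and a Serre-duality argument; you would need some such characteristic-free substitute. Your $Q_4\cong\Gr(2,4)$ check of the borderline case is fine as a verification, but the positive characteristic caveat applies there too.
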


The above theorem is a summary of Corollaries \ref{t1}, \ref{t2} and
\ref{t3}. In case of $s=1$ and $n=3,4$ the theorem (see also Corollary
\ref{quasi-exc}) gives the main result of \cite{Sa1}. In case of $s=1$
A. Samokhin in \cite{Sa2} proved a related result for $p\gg 0$ but
using a completely different method.

In fact, we prove a much stronger result than stated above: we
determine the decomposition of Frobenius push-forwards of line bundles
on quadrics.  In case of projective spaces one can easily compute the
corresponding decomposition using the Horrocks splitting criterion
(see, e.g., \cite[Lemma 2.1]{Ra}; one can also perform another direct
computation as in \cite[Proposition 4.1]{HKR}). We use a similar
strategy in the case of quadrics although it is much more difficult in
carrying out as we need to prove some non-trivial vanishing and
non-vanishing theorems for cohomology of Frobenius pull-backs of
spinor bundles.

\medskip

Let $X$ be a smooth projective variety defined over an algebraically
closed field $k$ of characteristic $p>0$. Let $X^{(s)}$ denotes the
$s$-th Frobenius twist of $X$, i.e., $X$ with the $k$-structure
defined by the fiber product of $X$ over the $s$-th Frobenius morphism
on $k$. Then the sheaf $\Diff_X$ of $k$-algebras of differential
operators admits the so called \emph{$p$-filtration}
$$\Db ^1_X\subset \Db ^2_X\subset \dots \subset \Db^s_X=\Hom
_{X^{(s)}}(\O_X, \O_X)\subset \dots \subset {\Diff}_X=\lim
_{\mathop{\leftarrow}_{s}}\Db ^s_X.$$ Let us also set $\db^s_X=\hom
_{X^{(s)}}(\O_X, \O_X)$. By definition $\db^s_X$ is an algebra, finite
dimensional over $k$.  By \cite[Proposition 3.4]{HKR} the above
theorem implies the following corollary:

\begin{Corollary}
If one of the conditions 1, 2, 3 of Theorem \ref{main} holds then
there is a canonical triangulated equivalence between the bounded
derived category $D^b(\Coh (\Db _{Q_n}^s))$ of coherent $\Db
_{Q_n}^s$-modules and the bounded derived category $D^b(\db
_{Q_n}^s-{\mathrm{mod}})$ of $\db _{Q_n} ^s$-modules.
\end{Corollary}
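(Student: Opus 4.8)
The plan is to obtain this as a formal consequence of Theorem \ref{main} together with the tilting-and-Morita mechanism packaged in \cite[Proposition 3.4]{HKR}. Write $E:=F^s_*\O_{Q_n}$. Since the relative Frobenius $Q_n\to Q_n^{(s)}$ is finite and flat, $E$ is a locally free sheaf on $Q_n^{(s)}$ (of rank $p^{ns}$), and by the definitions recalled above $\Db^s_{Q_n}=\Hom_{Q_n^{(s)}}(\O_{Q_n},\O_{Q_n})=\Hom_{Q_n^{(s)}}(E,E)$ is a sheaf of $\O_{Q_n^{(s)}}$-algebras on $Q_n^{(s)}$, with $\db^s_{Q_n}=\hom_{Q_n^{(s)}}(E,E)$ its algebra of global sections.

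The argument then has two steps. \emph{Step 1 (Morita).} As $E$ is a vector bundle, it is Zariski-locally on $Q_n^{(s)}$ a progenerator of $\Coh Q_n^{(s)}$; Morita theory for sheaves of algebras therefore gives an exact equivalence $\Coh(\Db^s_{Q_n})\simeq\Coh Q_n^{(s)}$, implemented by $\M\mapsto\Hom_{\Db^s_{Q_n}}(E,\M)$ with quasi-inverse $\F\mapsto E\otimes_{\O_{Q_n^{(s)}}}\F$, and hence a triangulated equivalence $D^b(\Coh(\Db^s_{Q_n}))\simeq D^b(\Coh Q_n^{(s)})$. \emph{Step 2 (tilting).} Under the identification $Q_n^{(s)}\cong Q_n$, Theorem \ref{main} says that in each of the cases (1), (2), (3) the bundle $E$ is tilting: it is quasi-exceptional, it Karoubian generates $D^b(Q_n^{(s)})$, and $\hom_{Q_n^{(s)}}(E,E)=\db^s_{Q_n}$ has finite global dimension. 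The standard tilting equivalence then shows that $R\hom_{Q_n^{(s)}}(E,-)$ is a triangulated equivalence $D^b(\Coh Q_n^{(s)})\simeq D^b(\db^s_{Q_n}-\mathrm{mod})$, with quasi-inverse $-\otimes^{\mathbf L}_{\db^s_{Q_n}}E$.

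Composing the equivalences of the two steps gives the desired equivalence $D^b(\Coh(\Db^s_{Q_n}))\simeq D^b(\db^s_{Q_n}-\mathrm{mod})$; since both constituent equivalences are built from canonical functors, so is the composite, which is the meaning of ``canonical'' in the statement. The only non-formal ingredient is Theorem \ref{main}, which supplies the tilting property; everything else is standard homological algebra, so I do not anticipate a genuine obstacle here beyond invoking \cite[Proposition 3.4]{HKR} with its hypotheses in place. The one detail worth checking explicitly is that the $\Db^s_{Q_n}$-module structure on $E$ matches its $\O_{Q_n^{(s)}}$-module structure under the Morita functor of Step 1, so that the composite really is the equivalence asserted; this is routine.
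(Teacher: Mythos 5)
Your proposal is correct and follows essentially the same route as the paper, which simply deduces the corollary from Theorem \ref{main} by invoking \cite[Proposition 3.4]{HKR}; your Morita-plus-tilting argument is just an unpacking of that cited proposition, with $\Db^s_{Q_n}=\End_{Q_n^{(s)}}(F^s_*\O_{Q_n})$ and $\db^s_{Q_n}$ its global endomorphism algebra. No gaps beyond the routine left/right module bookkeeping you already flag.
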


This is an analogue of the derived localization theorem of Beilinson
and Bernstein. Note that contrary to some expectations this equivalence
does not hold in any characteristic if $n$ is even and $s\ge 3$.

Let us recall that a variety $X$ is called \emph{D-quasi-affine} if
any $\O_X$-quasi-coherent $\Diff _X$-module is $\Diff_X$-generated by
its global sections. $X$ is \emph{D-affine} if it is D-quasi-affine and
$H^i (\Diff _{Q_n})=0$ for $i>0$.

In characteristic zero Beilinson and Bernstein (see \cite{BB}) proved
that flag varieties for semisimple algebraic groups are D-affine. In
\cite{Ha} Haastert proved that in positive characteristic the
projective space and the full flag variety for $\SL _3$ are D-affine but
no other examples of smooth projective D-affine varieties are
known. We prove that all quadrics are D-quasi-affine (see Proposition
\ref{quasi-aff}). Together with Theorem \ref{main} this implies the
following corollary:

\begin{Corollary}
If $n$ is odd and $p\ge n$ then $Q_n$ is D-affine.
\end{Corollary}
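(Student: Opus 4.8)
The plan is to reduce the corollary to a cohomology vanishing statement for the sheaf of differential operators and then to read that vanishing off from Theorem~\ref{main}. By Proposition~\ref{quasi-aff} every quadric is D-quasi-affine, so by the definition of D-affinity it remains only to prove that $H^i(Q_n,\Diff_{Q_n})=0$ for all $i>0$; this is the sole step in which the hypotheses ``$n$ odd'' and ``$p\ge n$'' are used.

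For this I would exploit the $p$-filtration $\Db^1_{Q_n}\subset \Db^2_{Q_n}\subset\cdots$ with $\Diff_{Q_n}=\bigcup_s \Db^s_{Q_n}$. Since $Q_n$ is Noetherian, sheaf cohomology commutes with this increasing union, so $H^i(Q_n,\Diff_{Q_n})=\lim_{\mathop{\to}_{s}}H^i(Q_n,\Db^s_{Q_n})$; as a direct limit along a cofinal subsystem computes the same colimit, it suffices to show $H^i(Q_n,\Db^s_{Q_n})=0$ for $i>0$ and all $s\ge 2$. Now by definition $\Db^s_{Q_n}=\Hom_{Q_n^{(s)}}(\O_{Q_n},\O_{Q_n})$ is the endomorphism sheaf $\End_{\O_{Q_n^{(s)}}}(F^s_*\O_{Q_n})$; since $Q_n$ is smooth, $F^s$ is finite and flat and $F^s_*\O_{Q_n}$ is locally free on $Q_n^{(s)}$, so the higher local $\Ext$ sheaves vanish, the local-to-global $\Ext$ spectral sequence degenerates, and (as $F^s$ is a homeomorphism on underlying spaces) $H^i(Q_n,\Db^s_{Q_n})\cong \ext^i_{Q_n^{(s)}}(F^s_*\O_{Q_n},F^s_*\O_{Q_n})$. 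For $n$ odd and $p\ge n$ we are in case~3 of Theorem~\ref{main} for every $s\ge 2$, so $F^s_*\O_{Q_n}$ is a tilting bundle, in particular quasi-exceptional, and the right-hand side vanishes for $i>0$. Chaining the identifications gives $H^i(Q_n,\Diff_{Q_n})=0$ for $i>0$, which together with D-quasi-affineness is exactly D-affinity of $Q_n$.

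I do not expect a genuine obstacle here: the corollary is a formal consequence of Theorem~\ref{main} and Proposition~\ref{quasi-aff}. The one point requiring a little care is that Theorem~\ref{main} only guarantees (quasi-)exceptionality of $F^s_*\O_{Q_n}$ at $s=1$ when $p>n$ strictly, so in the boundary case $p=n$ one must argue purely through the indices $s\ge 2$; this causes no trouble because those indices already form a cofinal subsystem of the $p$-filtration. A secondary bookkeeping matter is keeping track of the distinction between $Q_n$ and its Frobenius twist $Q_n^{(s)}$, but since $F^s$ induces an isomorphism on cohomology up to a twist of the base field $k$, this affects none of the vanishing assertions.
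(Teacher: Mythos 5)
Your argument is correct and is exactly the (implicit) proof the paper intends: D-quasi-affinity from Proposition~\ref{quasi-aff}, plus $H^i(Q_n,\Db^s_{Q_n})\cong\ext^i(F^s_*\O_{Q_n},F^s_*\O_{Q_n})=0$ for $i>0$ from the tilting/quasi-exceptional case~3 of Theorem~\ref{main}, passed to the colimit over the increasing $p$-filtration (with the cofinality remark correctly handling the boundary case $p=n$, where $s=1$ is not covered by case~1). No discrepancy with the paper's route.
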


Let us note that contrary to \cite[Proposition]{AK}, non-vanishing of
$H^i (\Db ^s_{Q_n})$ for even $n$ and $s\ge 3$ does not imply that
$H^i (\Diff _{Q_n})\ne 0$ (there is an error in the last two lines of
proof of \cite[Proposition]{AK} as the natural inclusions $\Db ^s\to
\Db^r$ do not agree with the maps from \cite[Lemma]{AK}).  In
particular, our results do not immediately imply that other quadrics
are not D-affine.

\medskip

We do not consider the case of characteristic $2$ although our methods
apply also in this case. We skip it as this case needs a separate,
quite long treatment. Let us just note that on the $3$-dimensional
quadric already $F^4_*\O_{Q_3}$ is not quasi-exceptional (but
$F^i_*\O_{Q_3}$ is quasi-exceptional if $i\le 3$).

\medskip

The structure of the paper is as follows. In Section 1 we give a few
interpretations of spinor bundles, compute some related cohomology and
show some dualities. In Section 2 we study Hilbert functions of some
algebras that occur to describe decompositions of Frobenius
push-forwards of line bundles on a quadric. In Section 3 we use these
results to prove some vanishing and non-vanishing theorems for
Frobenius pull-backs of spinor bundles. In Section 4 we use them for
computation of decompositions of Frobenius push-forwards of line
bundles on a quadric and we prove Theorem \ref{main}.

\section{Spinor bundles}

\subsection{Spinor bundles via representation theory}

Let $k$ be an algebraically closed field of characteristic $p>2$.  Let
us recall that $\Spin (n+2)$, $n\ge 1$, is a connected, semi-simple,
simply connected $k$-group, isogenous to $\SO (n+2)$.

We fix a maximal torus $T\subset \Spin (n+2)$.  If $n=2m+1$ or $n=2m$
then $\Spin (n+2)$ has $m+1$ simple roots $\alpha _1, \dots,
\alpha_{m+1}$.  Let us recall that a smooth $n$-dimensional quadric
$Q_n\subset \PP^N$, $N=n+1$, is a homogeneous space $\Spin
(n+2)/P(\alpha_1)$.

If $n=2m+1$ then the Dynkin diagram of $\Spin (n+2)$ is of type
$B_{m+1}$. If $n=2m$ then the Dynkin diagram is of type $A_1\times
A_1$ for $m=1$, $A_3$ for $m=2$ and $D_{m+1}$ if $m\ge 3$.

Let $\lambda_i\in X^*(T)$ be the fundamental weights defined by
$2\langle\lambda_i,\alpha_j \rangle/\langle\alpha_i,\alpha_j
\rangle=\delta_{ij}$.

For $n=2m$ there are two spin representations of the Levi
quotient of $P(\alpha_1)$ (which is of the same type as $\Spin (n)$)
with highest weight $\lambda_m$ and $\lambda_{m+1}$. They are of
dimension $2^{m-1}$.  If $n=2m+1$ then there is one spin
representation of the Levi quotient of $P(\alpha_1)$ with highest
weight $\lambda_{m+1}$. It is of dimension $2^{m}$.

Duals of vector bundles on $Q_n$ associated to the principal
$P(\alpha_1)$-bundle $\Spin (n+2)\to \Spin (n+2)/P(\alpha_1)$ via
these representations are called \emph{spinor bundles} and denoted by
$\Sigma$ if $n$ is odd and $\Sigma_{-}$ and $\Sigma_{+}$ if $n$ is
even.  The determinant of any spinor bundle on $Q_n$, $n\ge 3$ is
isomorphic to $\O _{Q_n}(-2^{[\frac{n-3}{2}]})$. Let us note the
following useful isomorphisms:
$$
\begin{array}{cl}
\Sigma ^*\simeq \Sigma (1) \quad &\hbox{if $n=2m+1$, } \\
{\Sigma}_{-} ^*\simeq {\Sigma}_{-} (1) \quad \hbox{ and }\quad {\Sigma}_{+}^*\simeq
{\Sigma_{+}}(1) \quad &\hbox{if $n=4m$, } \\
{\Sigma}_{-} ^*\simeq {\Sigma_{+}} (1) \quad \hbox{ and }\quad {\Sigma}_{+}^*\simeq
\Sigma_{-} (1) \quad &\hbox{if $n=4m+2$. } \\
\end{array}
$$ We leave it to the reader to verify these isomorphisms using the
well known computation of the centre of the Spin group.

Let us recall that a vector bundle $E$ on a smooth $n$-dimensional
hypersurface $X=(f=0)\subset \PP^{n+1}=\Proj\, S$ is called
\emph{arithmetically Cohen-Macaulay} (ACM for short), if it has no
intermediate cohomology, i.e., $H^i(X, E(t))=0$ for $0<i<n$ and all
$t$. A vector bundle $E$ is ACM if and only if the corresponding
graded $S/(f)$-module is maximal Cohen-Macaulay.

Let us also recall that a vector bundle is called strongly slope (semi)stable
if all its Frobenius pull-backs are slope (semi)stable.

\begin{Theorem} \label{coh-spin}
A spinor bundle $\Sigma$ on $Q_n\subset \PP^N$ is a strongly slope
stable ACM bundle. Moreover, $h^0(Q_n, \Sigma (t))=0$ for $t\le 0$ and
$h^0(Q_n, \Sigma (t))=2^{[\frac{N}{2}]}{{t+n-1}\choose {n}}$ for $t\ge
1$.
\end{Theorem}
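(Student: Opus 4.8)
The plan is to establish the three assertions—ACM, strong slope stability, and the cohomology formula—in a way that uses the representation-theoretic description of $\Sigma$ together with classical facts about quadrics.

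\medskip

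First I would treat the cohomology vanishing $H^i(Q_n,\Sigma(t))=0$ for $0<i<n$ and all $t$, i.e.\ the ACM property. The natural tool is Kempf vanishing together with the Borel--Weil--Bott description of cohomology of homogeneous bundles in arbitrary characteristic, but Bott's theorem fails in positive characteristic, so instead I would exploit the standard locally free resolution (Koszul-type) of $\Sigma$ on $Q_n$ coming from the embedding $Q_n\subset\PP^N$, or better, use the two-periodic description: on the affine cone, the module corresponding to $\Sigma$ is a maximal Cohen--Macaulay module over the hypersurface ring $S/(f)$, arising as the cokernel of one of the two matrices in a matrix factorization of the quadratic form $f$. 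This matrix factorization structure is characteristic-free (it only needs $p\ne 2$ so that the quadratic form is nondegenerate and the Clifford algebra behaves well), and it immediately gives that $\Sigma$ is ACM, because cokernels of matrix factorizations of a hypersurface equation are exactly the MCM modules without free summands. So the ACM claim reduces to identifying $\Sigma$ (up to twist) with such a cokernel, which is the Clifford-module / spinor construction and should be stated as known.

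\medskip

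Next, for the explicit value of $h^0(Q_n,\Sigma(t))$: by the ACM property the graded module $M=\bigoplus_t H^0(Q_n,\Sigma(t))$ is MCM over $R=S/(f)$, and the matrix factorization presents $M$ as the cokernel of a map $R(-1)^{\oplus r}\xrightarrow{} R^{\oplus r}$ with $r=2^{[N/2]}$ (the rank times $1$, with the appropriate normalization of the twist so that $\Sigma$ has no sections in non-positive degrees and starts in degree $1$). From the exact sequence $0\to \Sigma(t-1)\to \O_{Q_n}(t)^{\oplus r}\to\Sigma(t)\to 0$ one gets $h^0(Q_n,\Sigma(t)) - h^0(Q_n,\Sigma(t-1)) = r\cdot h^0(Q_n,\O_{Q_n}(t)) = r\cdot\left[\binom{t+n+1}{n+1}-\binom{t+n-1}{n+1}\right] = r\binom{t+n-1}{n}$ (using $h^0(\O_{Q_n}(t))=h^0(\O_{\PP^{n+1}}(t))-h^0(\O_{\PP^{n+1}}(t-2))$); telescoping from the base case $h^0(\Sigma(t))=0$ for $t\le 0$ yields $h^0(Q_n,\Sigma(t))=r\sum_{j=1}^{t}\binom{j+n-1}{n}=r\binom{t+n-1}{n-1}$—at which point I would recheck the binomial bookkeeping against the claimed $\binom{t+n-1}{n}$, since the exact normalization of the twist in the spinor bundle (the paper uses $\det\Sigma=\O(-2^{[(n-3)/2]})$) affects where the module starts. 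The vanishing $h^0(\Sigma(t))=0$ for $t\le 0$ follows from slope stability: $\mu(\Sigma(t))<0=\mu(\O_{Q_n})$ for $t\le 0$ once one knows $\mu(\Sigma)<0$, which is clear from the negativity of $\det\Sigma$.

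\medskip

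Finally, strong slope stability: I would show every Frobenius pull-back $F^{s*}\Sigma$ is slope stable. For $n$ odd $\Sigma$ is the unique spinor bundle and $F^{s*}\Sigma$ is again a homogeneous (equivariant) bundle under $\Spin(n+2)$, hence a direct sum of indecomposable equivariant bundles; stability of an equivariant bundle on a variety with Picard number one can be checked by comparing slopes of equivariant (hence again homogeneous) saturated subsheaves, and one uses that $\Sigma$ itself is stable (a classical fact, e.g.\ via the fact that it is simple and rigid, or via restriction to a line where it becomes $\O\oplus\cdots\oplus\O(-1)$ balanced). The cleanest argument: a torsion-free sheaf on $Q_n$ with $\rho=1$ that is rigid and whose restriction to a general line is balanced is slope stable; and $F^{s*}\Sigma$ restricted to a general line $\ell\cong\PP^1$ is $F^{s*}$ of $\Sigma|_\ell=\O(-1)^{a}\oplus\O^{b}$, whose Frobenius pull-back is $\O(-p^s)^a\oplus\O^b$—not balanced. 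So I instead argue via the openness of stability and the fact that $\Sigma$ restricted to a general hyperplane section $Q_{n-1}$ is again (a sum of) spinor bundle(s), running an induction on $n$: strong stability on $Q_3$ (where $Q_3\cong$ a point in the moduli sense, or use $\Sigma$ is an Ulrich bundle of rank $2$ and rank-$2$ Ulrich bundles on $Q_3$ are stable, and this persists under $F^*$ because $F^*\Sigma$ is still ACM with the right Hilbert polynomial to be semistable) plus the Mehta--Ramanathan-type restriction theorem to push stability down from $Q_n$ to $Q_{n-1}$. I expect this stability step—specifically, verifying that \emph{all} Frobenius pull-backs stay stable rather than merely semistable—to be the main obstacle, and the honest route is probably: $F^{s*}\Sigma$ is equivariant, so its maximal destabilizing subsheaf is equivariant, hence corresponds to a subrepresentation of the (twisted-Frobenius) $P(\alpha_1)$-module defining $\Sigma$; a weight computation shows no such subrepresentation has slope $\ge\mu(F^{s*}\Sigma)$, giving stability directly.
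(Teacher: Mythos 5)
Your overall architecture (matrix factorization for ACM, the two\-/periodic exact sequence for $h^0$, equivariance for stability) is consistent with how the paper sets things up, but two steps as written do not go through.

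First, the $h^0$ recursion is structurally wrong. The relevant sequence, namely (\ref{(i)}) (or (\ref{(ii)}), (\ref{(iii)})) twisted by $t-1$, reads
$$0\to \Sigma(t-1)\to \O_{Q_n}(t-1)^{\oplus r}\to \Sigma(t)\to 0,\qquad r=2^{[N/2]},$$
so the middle term carries the twist $t-1$, not $t$, and since $\Sigma$ is ACM the long exact sequence gives the \emph{alternating} relation $h^0(\Sigma(t))+h^0(\Sigma(t-1))=r\,h^0(\O_{Q_n}(t-1))$, not the telescoping difference $h^0(\Sigma(t))-h^0(\Sigma(t-1))=r\,h^0(\O_{Q_n}(t))$ that you wrote. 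Your version yields $r\binom{t+n-1}{n-1}$ (and even that uses an incorrect hockey\-/stick identity: $\sum_{j=1}^{t}\binom{j+n-1}{n}=\binom{t+n}{n+1}$), which already fails at $t=1$ where the answer must be $r$. The correct alternating recursion, seeded by $h^0(\Sigma(t))=0$ for $t\le 0$ (which you justify correctly via stability and $\mu(\Sigma)<0$), does produce the stated $2^{[N/2]}\binom{t+n-1}{n}$, because $\binom{t+n-1}{n}+\binom{t+n-2}{n}=\binom{t+n}{n+1}-\binom{t+n-2}{n+1}=h^0(\O_{Q_n}(t-1))$. This is exactly the computation the paper defers to the sequences (\ref{(i')})--(\ref{(iii')}).

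Second, the strong stability step is left as a gesture: ``a weight computation shows no such subrepresentation has slope $\ge\mu$'' is the entire content of the claim and is not carried out, after several abandoned attempts (restriction to lines, Mehta--Ramanathan) that you yourself note fail. Your equivariant route can be closed cleanly: $(F^s)^*\Sigma$ is the homogeneous bundle associated to the $s$\-/th Frobenius twist of the spin representation of the Levi quotient of $P(\alpha_1)$, which is again irreducible, so the same criterion ([Bi, Theorem~2.1], irreducible representation $\Rightarrow$ stable homogeneous bundle) that gives stability of $\Sigma$ gives stability of every Frobenius pull\-/back. The paper instead deduces stability of $\Sigma$ from irreducibility via Biswas and then gets strong stability from the general principle that Frobenius pull\-/back preserves stability when $\mu_{\max}(\Omega_{Q_n})<0$ (if the maximal destabilizing subsheaf of $F^*E$ were not preserved by the canonical connection one gets a nonzero map into the quotient tensored with $\Omega_{Q_n}$, forcing a slope contradiction). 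Either route works, but you need to commit to one and execute it.
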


\begin{proof}
Let us first note that spin representations are irreducible (this
follows from \cite[Part II, Corollary 5.6]{Ja}). Therefore by
\cite[Theorem 2.1]{Bi} spinor bundles are slope stable. Strong slope
stability follows in a standard way from inequality $\mu _{\max}(\Omega _{Q_n})<0$.

The fact that spinor bundles are ACM can be proven in the same way as
\cite[Theorem 2.3]{Ot}. The last part of the theorem will be obvious
later (use sequences (\ref{(i')}), (\ref{(ii')}) and (\ref{(iii')})).
\end{proof}

\subsection{Spinor bundles via matrix factorization}

Theorem \ref{coh-spin} implies that spinor bundles correspond to
irreducible maximal Cohen--Macaulay modules on an affine cone over the
quadric (or equivalently to indecomposable matrix factorizations of
the equation of the quadric). Below we give an explicit construction
of spinor bundles using matrix factorization.

As a special case of Kn\"orrer's periodicity theorem (see
\cite[Theorem 3.1]{Kn}) we get the following theorem:

\begin{Theorem} \label{ACM-Q}
Any ACM bundle on a smooth projective quadric defined over an
algebraically closed field of characteristic $p\ne 2$ is a direct sum
of line bundles and twisted spinor bundles.
\end{Theorem}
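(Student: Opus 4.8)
The plan is to deduce this from Knörrer periodicity, reducing the classification of matrix factorizations of a nondegenerate quadratic form in $N+1 = n+2$ variables down to a form in a small number of variables. Concretely, a quadric $Q_n = (f=0) \subset \PP^{n+1}$ with $f$ a nondegenerate quadratic form over $k$ ($\char k \ne 2$) can be put in the normal form $f = x_0 x_1 + x_2 x_3 + \dots$ (plus one leftover square if $n+2$ is odd). Knörrer's theorem \cite[Theorem 3.1]{Kn} gives an equivalence between the category of matrix factorizations of $f$ and that of $f' = f - (uv)$ (or $f - (u^2 + v^2)$, which after a change of variable is $f - uv$ up to the relevant subtleties in characteristic $\ne 2$) in two fewer variables; iterating, one lands on a quadratic form in either one variable ($u^2$, the case $n+2$ odd, i.e. $n$ odd) or two variables ($uv$, the case $n+2$ even, i.e. $n$ even).

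The key steps, in order: first record that ACM bundles on $Q_n$ correspond bijectively, via the cone construction, to graded maximal Cohen–Macaulay modules over the hypersurface ring $S/(f)$, which by Eisenbud's theory are the same as (reduced) matrix factorizations of $f$; this is exactly the correspondence invoked right before the statement, on the strength of Theorem \ref{coh-spin}. Second, apply Knörrer periodicity $\lceil (n)/2 \rceil$ or so times to transport the problem to a quadratic form $f_0$ in one or two variables. Third, classify matrix factorizations of $f_0$ directly: for $f_0 = u^2$ the indecomposable matrix factorizations are $(u,u)$ and the trivial ones $(1,u^2)$, $(u^2,1)$; for $f_0 = uv$ the indecomposables are $(u,v)$, $(v,u)$ and the trivial ones. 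Fourth, translate back: the trivial matrix factorizations correspond to (twists of) $\O_{Q_n}$ — equivalently, free modules / line bundles — and the genuinely nontrivial one(s) correspond, by the explicit form of the Knörrer equivalence on objects, precisely to the twisted spinor bundle(s) $\Sigma$ (one of them when $n$ is odd) or $\Sigma_+, \Sigma_-$ (two of them when $n$ is even). Finally, note that a direct sum decomposition of a matrix factorization corresponds to a direct sum decomposition of the associated ACM bundle, so the classification of indecomposables yields the stated direct-sum statement for all ACM bundles, with the twist bookkeeping tracked through the determinant formula $\det \Sigma \simeq \O_{Q_n}(-2^{[(n-3)/2]})$ already recorded above.

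The main obstacle is the identification in the fourth step: matching the abstract nontrivial matrix factorization produced by iterated Knörrer periodicity with the concretely-defined spinor bundle coming from representation theory. This requires either an explicit description of the Clifford-algebra module underlying the periodicity equivalence and comparing it with the spin representation of the Levi of $P(\alpha_1)$, or a dimension/rank count combined with indecomposability and stability (Theorem \ref{coh-spin}) to pin down that there is a unique nontrivial indecomposable ACM bundle up to twist when $n$ is odd, and exactly two when $n$ is even (interchanged by a symmetry when $4 \mid n+2$, fixed otherwise), forcing them to be the spinor bundles. A secondary but routine nuisance is handling characteristic-$2$–free normal forms carefully so that the "$u^2+v^2$ vs. $uv$" dichotomy in Knörrer's theorem does not introduce a spurious extra case; since $p > 2$ this is harmless. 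I would present the argument mostly by citing \cite{Kn} for the periodicity and then doing the small base-case classification by hand, keeping the identification-with-$\Sigma$ step brief by appealing to uniqueness of the nontrivial indecomposable(s).
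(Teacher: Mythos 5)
Your proposal is correct and follows essentially the same route as the paper, which simply derives the theorem as a special case of Kn\"orrer's periodicity theorem \cite[Theorem 3.1]{Kn}; your extra detail on the base-case classification and on identifying the nontrivial indecomposable matrix factorization(s) with the spinor bundle(s) matches what the paper makes explicit via the inductively defined pairs $(\varphi_m,\psi_m)$ and $(\varphi'_m,\psi'_m)$ in the following subsection. No gap.
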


If $n\le 2$ then the Picard group is not generated by $\O_{Q_n}(1)$
and any ACM bundle on $Q_n$ is isomorphic to a direct sum of line
bundles $\O_{Q_n}(i)$ and spinor bundles twisted by some $\O
_{Q_n}(i)$. If $n\ge 3$ then any direct sum of line bundles and
twisted spinor bundles is ACM.

\medskip

Let us set $\varphi_0=\psi_0=(x_0)$ and let us define inductively
pairs of matrices
$$\varphi_{m+1}=\left(
\begin{array}{cc}
\varphi_{m} & x_{2m+1}I_{2^{m}\times 2^{m}}\\
x_{2m+2}I_{2^{m}\times 2^{m}}& -\psi _m\\
\end{array}
\right)$$
and
$$\psi_{m+1}=\left(
\begin{array}{cc}
\psi_{m} & x_{2m+1}I_{2^{m}\times 2^{m}}\\
x_{2m+2}I_{2^{m}\times 2^{m}}& -\varphi _m\\
\end{array}
\right) .$$
Then the pair $(\varphi_m,\psi_m)$ is a matrix factorization of
$x_0^2+x_1x_2+\dots +x_{2m-1}x_{2m}$.

Let us set $\varphi'_0=(x_1)$, $ \psi'_0=(x_2)$ and as above define
inductively pairs of matrices
$$\varphi'_{m+1}=\left(
\begin{array}{cc}
\varphi'_{m} & x_{2m+1}I_{2^{m}\times 2^{m}}\\
x_{2m+2}I_{2^{m}\times 2^{m}}& -\psi' _m\\
\end{array}
\right)$$
and
$$\psi'_{m+1}=\left(
\begin{array}{cc}
\psi_{m}' & x_{2m+1}I_{2^{m}\times 2^{m}}\\
x_{2m+2}I_{2^{m}\times 2^{m}}& -\varphi'_m\\
\end{array}
\right) .$$
Then the pair $(\varphi_m',\psi_m')$ is a matrix factorization of
$x_1x_2+\dots +x_{2m-1}x_{2m}$.

Let $i:Q_n \hookrightarrow \PP^N$ be the above defined embedding of a
quadric ($n=2m$ or $n=2m+1$). Then we have the following short exact
sequences of sheaves on $\PP^N$:
\begin{equation}
0\to \O_{\PP^N}^{2^{m+1}}(-1)\mathop{\to}^{\varphi_{m+1}}
\O_{\PP^N}^{2^{m+1}}\to i_*\Sigma (1)\to 0,\label{(i')}
\end{equation}
if $n=2m+1$, and
\begin{equation}
0\to \O_{\PP^N}^{2^{m}}(-1)\mathop{\to}^{\varphi'_m}
\O_{\PP^N}^{2^{m}}\to i_*\Sigma _{-}(1)\to 0,\label{(ii')}
\end{equation}
\begin{equation}
0\to \O_{\PP^N}^{2^{m}}(-1)\mathop{\to}^{\psi'_m}
\O_{\PP^N}^{2^{m}}\to i_*\Sigma _{+}(1)\to 0.\label{(iii')}
\end{equation}
if $n=2m$.

Using the above description we get the following short exact sequences
of vector bundles:
\begin{equation}
0\to \Sigma \to \O_{Q_n}^{2^{m+1}}\to \Sigma (1)\to 0,\label{(i)}
\end{equation}
if $n=2m+1$, and
\begin{equation}
0\to \Sigma_{-}\to \O_{Q_n}^{2^{m}}\to \Sigma _{+}(1)\to 0,\label{(ii)}
\end{equation}
\begin{equation}
0\to \Sigma _{+}\to \O_{Q_n}^{2^{m}}\to \Sigma _{-}(1)\to 0,\label{(iii)}
\end{equation}
if $n=2m$. It should be noted that the above explicit presentations
allow for computer calculations of cohomology groups of Frobenius pull
backs of spinor bundles.

\begin{Lemma} \label{tensor-spin}
For any spinor bundles $\Sigma_1, \Sigma _2$ on $Q_n$
we have $H^1(Q_n, \Sigma_1 \otimes \Sigma _2(t))=0$ if $t\ne 0$.
Moreover,
$$
h^1(Q_n, \Sigma_1 \otimes \Sigma _2)=\left\{
\begin{array}{cl}
1& \quad\hbox{if $n=2m+1$,}\\
1& \quad\hbox{if $n=4m$ and $\Sigma_1\not\simeq \Sigma_2$,}\\
0& \quad\hbox{if $n=4m$ and $\Sigma_1\simeq \Sigma_2$,}\\
1& \quad\hbox{if $n=4m+2$ and $\Sigma_1\simeq \Sigma_2$,}\\
0& \quad\hbox{if $n=4m+2$ and $\Sigma_1\not \simeq \Sigma_2$.}\\
\end{array}\right.
$$ In particular, for any $0<i<n$ there exist spinor bundles
$\Sigma_1, \Sigma_2$ such that $\ext ^i(\Sigma_1(i), \Sigma_2)\ne 0$.
\end{Lemma}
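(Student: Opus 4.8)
The plan is to reduce everything to computations of spaces of sections by tensoring the defining sequences (\ref{(i)})--(\ref{(iii)}) with twists of $\Sigma_2$ and using that spinor bundles are arithmetically Cohen--Macaulay (Theorem \ref{coh-spin}). Given a spinor bundle $\Sigma_j$, I will write $\Sigma_j'$ for the third term of the appropriate sequence among (\ref{(i)})--(\ref{(iii)}) (so $\Sigma_j'=\Sigma_j$ when $n$ is odd, and $\Sigma_j'$ is the other spinor bundle when $n$ is even), and $\widetilde\Sigma_j$ for the spinor bundle with $\Sigma_j^*\simeq\widetilde\Sigma_j(1)$ (so $\widetilde\Sigma_j=\Sigma_j$ unless $n\equiv 2\pmod 4$, in which case $\widetilde\Sigma_j$ is the other spinor bundle), as dictated by the displayed isomorphisms before Theorem \ref{coh-spin}. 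Writing $r=2^{m+1}$ if $n=2m+1$ and $r=2^m$ if $n=2m$, the sequences (\ref{(i)})--(\ref{(iii)}) read $0\to\Sigma_1\to\O_{Q_n}^{r}\to\Sigma_1'(1)\to 0$; tensoring with $\Sigma_2(t)$ and using $H^i(Q_n,\Sigma_2(s))=0$ for $0<i<n$ yields the exact sequence
\[
0\to H^0(\Sigma_1\otimes\Sigma_2(t))\to H^0(\Sigma_2(t))^{r}\to H^0(\Sigma_1'\otimes\Sigma_2(t+1))\to H^1(\Sigma_1\otimes\Sigma_2(t))\to 0
\]
together with isomorphisms $H^{i+1}(\Sigma_1\otimes\Sigma_2(t))\simeq H^i(\Sigma_1'\otimes\Sigma_2(t+1))$ for $1\le i\le n-2$. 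This reduces all the needed cohomology to $H^0$'s.

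Next I would compute these $H^0$'s. From $\Sigma_1^*\simeq\widetilde\Sigma_1(1)$ one gets $H^0(\Sigma_1\otimes\Sigma_2(t))\simeq\hom(\widetilde\Sigma_1,\Sigma_2(t-1))$. All spinor bundles on $Q_n$ are slope stable with the same rank and the same determinant $\O_{Q_n}(-2^{[(n-3)/2]})$; hence a nonzero homomorphism $\widetilde\Sigma_1\to\Sigma_2(t-1)$ can exist only for $t\ge 1$, and for $t=1$ equality of slopes forces it to be an isomorphism. Therefore $H^0(\Sigma_1\otimes\Sigma_2(t))=0$ for $t\le 0$, while $H^0(\Sigma_1\otimes\Sigma_2(1))$ is one-dimensional when $\widetilde\Sigma_1\simeq\Sigma_2$ and zero otherwise; applying this with $\Sigma_1'$ in place of $\Sigma_1$ likewise gives $H^0(\Sigma_1'\otimes\Sigma_2(1))\simeq\hom(\widetilde{\Sigma_1'},\Sigma_2)$, which is one-dimensional exactly when $\widetilde{\Sigma_1'}\simeq\Sigma_2$.

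Now from the four-term sequence, $h^1(\Sigma_1\otimes\Sigma_2(t))=h^0(\Sigma_1\otimes\Sigma_2(t))-r\,h^0(\Sigma_2(t))+h^0(\Sigma_1'\otimes\Sigma_2(t+1))$. For $t\le -1$ every term on the right vanishes, so $H^1(\Sigma_1\otimes\Sigma_2(t))=0$; for $t=0$ only the last term survives, giving $h^1(\Sigma_1\otimes\Sigma_2)=h^0(\Sigma_1'\otimes\Sigma_2(1))$, and unravelling $\widetilde{\Sigma_1'}$ in the cases $n\equiv 1\pmod 2$, $n\equiv 0\pmod 4$, $n\equiv 2\pmod 4$ (where $\widetilde{\Sigma_1'}$ is $\Sigma$, the other spinor bundle, and $\Sigma_1$ respectively) produces exactly the five listed values. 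For $t\ge 1$ I would use Serre duality and the isomorphisms above: $H^1(\Sigma_1\otimes\Sigma_2(t))^*\simeq H^{n-1}(Q_n,\widetilde\Sigma_1\otimes\widetilde\Sigma_2(2-t-n))$, and applying the isomorphisms $H^{i+1}\simeq H^i$ of the first paragraph $n-2$ times (to $\widetilde\Sigma_1$'s sequence tensored with twists of $\widetilde\Sigma_2$) rewrites this as $H^1(B\otimes\widetilde\Sigma_2(-t))$ for a spinor bundle $B$; since $-t\le -1$, the case already treated gives $0$. This handles the first assertion.

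Finally, for $0<i<n$ write $\ext^i(\Sigma_1(i),\Sigma_2)\simeq H^i(Q_n,\Sigma_1^*\otimes\Sigma_2(-i))\simeq H^i(Q_n,\widetilde\Sigma_1\otimes\Sigma_2(1-i))$ and apply the isomorphisms $H^{j}\simeq H^{j-1}$ exactly $i-1$ times (legitimate since the degrees stay in $[1,n-1]$) to get $\ext^i(\Sigma_1(i),\Sigma_2)\simeq H^1(B\otimes\Sigma_2)$, where $B$ is obtained from $\Sigma_1$ by the operations $\Sigma\mapsto\widetilde\Sigma$ and $\Sigma\mapsto\Sigma'$. As these operations are bijections of the one- or two-element set of spinor bundles, $B$ can be made any spinor bundle by choosing $\Sigma_1$ appropriately, and taking $B$ and $\Sigma_2$ in one of the nonvanishing cases of the second part (for instance $B=\Sigma_2=\Sigma$ if $n$ is odd, $B=\Sigma_-$ and $\Sigma_2=\Sigma_+$ if $n\equiv 0\pmod 4$, $B=\Sigma_2=\Sigma_-$ if $n\equiv 2\pmod 4$) makes $\ext^i(\Sigma_1(i),\Sigma_2)$ nonzero. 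The step I expect to be the main obstacle is carrying the $t\ge 1$ vanishing back to the $t\le -1$ one through Serre duality and the iterated isomorphisms; the rest is bookkeeping of which spinor bundle occurs under $\Sigma\mapsto\widetilde\Sigma$ and $\Sigma\mapsto\Sigma'$, made tractable by the slope-stability input forcing the relevant boundary maps between twisted spinor bundles to be isomorphisms or zero.
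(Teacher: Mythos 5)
Your proposal is correct and follows essentially the same route as the paper: stability of spinor bundles plus the sequences (\ref{(i)})--(\ref{(iii)}) give the $H^0$ computations and the value at $t=0$, and the $t>0$ vanishing is reduced to $t<0$ by the duality of Lemma \ref{2-spin-duality} (with $s=0$), whose proof via Serre duality and the iterated isomorphisms $H^{i+1}\simeq H^i$ you have simply inlined. The case bookkeeping for $\Sigma\mapsto\Sigma'$ and $\Sigma\mapsto\widetilde\Sigma$ checks out against the stated values.
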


\begin{proof}
As spinor bundles on a quadric are strongly stable we see that $\hom
(\Sigma_1, \Sigma_2(t))=0$ if $t<0$ or if $t=0$ and $\Sigma_1$ and
$\Sigma_2$ are not isomorphic. This remark, together with sequences
(\ref{(i)}), (\ref{(ii)}), (\ref{(iii)}), imply the second part of the
lemma.

To prove the first assertion note that by Lemma \ref{2-spin-duality}
for $s=0$ there exist spinor bundles $\ti \Sigma_1, \ti \Sigma _2$
such that
$$H^1(Q_n, \Sigma_1 \otimes \Sigma _2(t))\simeq H^1(Q_n, \ti \Sigma_1
\otimes \ti \Sigma _2(-t))^*.$$ So it is sufficient to note that by
Theorem \ref{coh-spin} sequences (\ref{(i)}), (\ref{(ii)}),
(\ref{(iii)}) imply that $H^1(\Sigma_1 \otimes \Sigma _2(t))$ is a
quotient of $H^0(\Sigma_1' \otimes \Sigma _2(t+1))$ for some spinor
bundle $\Sigma_1'$. This last cohomology group vanishes for $t<0$ as
we can write it as $\hom (\Sigma '', \Sigma_2(t))$ for some spinor
bundle $\Sigma''$.

The last part of the lemma follows from isomorphisms
$${\ext} ^i(\Sigma_1(i), \Sigma_2)\simeq H^i(\Sigma_1^*\otimes
\Sigma_2(-i))\simeq H^1(\Sigma_1'\otimes \Sigma_2)$$ for some spinor
bundle $\Sigma_1'$.
\end{proof}

\begin{Corollary} \label{dir-sum1}
Let $E$ be an ACM bundle on $Q_n$ and let $0<i<n$ be a fixed integer.
If for all spinor bundles $\Sigma$ on $Q_n$ we have
$H^i(Q_n, E\otimes \Sigma (t))=0$ for all $t\in \ZZ$ then $E$ is a direct sum
of line bundles.
\end{Corollary}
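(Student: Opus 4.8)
The plan is to combine Theorem~\ref{ACM-Q} with the cohomological criterion hidden in Lemma~\ref{tensor-spin}. Since $E$ is ACM on $Q_n$ with $n\ge 3$ (the case $n\le 2$ being degenerate as noted after Theorem~\ref{ACM-Q}), Knörrer periodicity gives a decomposition
$$
E\simeq \bigoplus_j \O_{Q_n}(a_j)\ \oplus\ \bigoplus_k \Sigma^{(k)}(b_k),
$$
where each $\Sigma^{(k)}$ is a spinor bundle. The goal is to show the second summand is empty. The strategy is to suppose it is not and to produce, from the hypothesis $H^i(Q_n,E\otimes\Sigma(t))=0$ for all spinor $\Sigma$ and all $t$, a contradiction by testing against a well-chosen spinor bundle and twist.

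The key step is the following: for a twisted spinor summand $\Sigma^{(k)}(b_k)$ appearing in $E$, I would pick the spinor bundle $\Sigma$ and the shift $t$ so that $H^i(Q_n,\Sigma^{(k)}(b_k)\otimes\Sigma(t))\ne 0$. By the last sentence of Lemma~\ref{tensor-spin}, for each $0<i<n$ there are spinor bundles $\Sigma_1,\Sigma_2$ with $\ext^i(\Sigma_1(i),\Sigma_2)\ne 0$, equivalently $H^i(Q_n,\Sigma_1^*\otimes\Sigma_2(-i))\ne 0$; unwinding via $\Sigma_1^*\simeq\Sigma_1'(1)$ this is an $H^i$ of a twisted tensor product of two spinor bundles, which is nonzero for exactly one value of the twist (again by Lemma~\ref{tensor-spin}, the $H^1$-statement after dualizing through the sequences (\ref{(i)})--(\ref{(iii)}), since $\ext^i$ of ACM bundles reduces to $H^1$ up to twist). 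So for the summand $\Sigma^{(k)}(b_k)$ I choose the matching spinor partner $\Sigma$ and the unique twist $t$ that makes $H^i(Q_n,\Sigma^{(k)}(b_k)\otimes\Sigma(t))$ one-dimensional. The point is that the contributions of the \emph{other} summands to $H^i(Q_n,E\otimes\Sigma(t))$ all vanish: line bundle summands $\O_{Q_n}(a_j)$ contribute $H^i(Q_n,\Sigma(t+a_j))=0$ for $0<i<n$ because spinor bundles are ACM (Theorem~\ref{coh-spin}), and the remaining spinor summands $\Sigma^{(k')}(b_{k'})$ contribute $H^i$ of twisted tensor products of two spinor bundles, each of which vanishes except possibly at one specific twist — and here is where the care is needed.

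The main obstacle, then, is bookkeeping the twists: a priori several spinor summands could conspire so that the ``bad'' twist for one of them coincides with a vanishing twist for all the others, or cancellation could occur. I would handle this by ordering the spinor summands by their twist $b_k$ and looking at the extreme one, say the summand with the largest $b_k$ (or smallest, whichever makes the relevant $H^i$ or $H^0$ argument monotone). Concretely, writing $H^i(Q_n,E\otimes\Sigma(t))$ as a sum of $H^i$'s of individual twisted tensor products and using that for spinor bundles these $H^i$'s are computed by the short exact sequences (\ref{(i)})--(\ref{(iii)}) — reducing $H^i$ up to one twist to an $H^0$ which grows with the twist by the explicit formula $h^0(Q_n,\Sigma(t))=2^{[N/2]}\binom{t+n-1}{n}$ — one sees that at the extreme twist there is a single surviving contribution, with no room for cancellation since every term in the sum is a genuine cohomology group of a bundle (the maps in the long exact sequences force the relevant term to inject or surject). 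This forces $H^i(Q_n,E\otimes\Sigma(t))\ne 0$ for that choice, contradicting the hypothesis. Hence there are no spinor summands and $E$ is a direct sum of line bundles.
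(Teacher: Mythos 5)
Your proof is correct and is essentially the paper's argument: decompose $E$ via Theorem~\ref{ACM-Q}, reduce $H^i$ to $H^1$ through the sequences (\ref{(i)})--(\ref{(iii)}), and test against the matching spinor partner supplied by Lemma~\ref{tensor-spin}. The only comment is that the ``main obstacle'' you spend most of the proof on is illusory: cohomology commutes with direct sums, so $H^i(Q_n,E\otimes\Sigma(t))=\bigoplus_j H^i(Q_n,\O_{Q_n}(a_j)\otimes\Sigma(t))\oplus\bigoplus_k H^i(Q_n,\Sigma^{(k)}(b_k)\otimes\Sigma(t))$, and a single nonvanishing summand already forces nonvanishing of the whole group --- no cancellation can occur and no extreme-twist or monotonicity argument is needed.
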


\begin{proof}
Using sequences (\ref{(i)}), (\ref{(ii)}), (\ref{(iii)}) and Theorem
\ref{coh-spin} we see that $H^i(E\otimes \Sigma (t))=H^1(E\otimes
\Sigma ' (t+i-1))$ for some spinor bundle $\Sigma'$.  Then the
required assertion follows from Theorem \ref{ACM-Q} and Lemma
\ref{tensor-spin}.
\end{proof}

\subsection{Some dualities}

For a non-negative integer $s$ let us set $q=p^s$ and
$d_{n,s}=(n-1)\frac{q-1}{2}$.

\begin{Lemma}\label{spin-duality}
Let $\Sigma$ be a spinor bundle on $Q_n$. Then for all $0<i<n$ there
exists some spinor bundle $\ti \Sigma$ such that for all integers
$j$ we have the following duality
$$H^i(Q_n, (F^s)^*\Sigma (d_{n,s}-(i-1)q+j)) \simeq H^i(Q_n,
(F^s)^*\ti\Sigma (d_{n,s}-(i-1)q-j-1))^*.$$
\end{Lemma}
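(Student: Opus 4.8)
The plan is to derive this as an instance of Serre duality on $Q_n$, tracking how the spinor bundle transforms under dualizing. Recall $\omega_{Q_n} \simeq \O_{Q_n}(-n)$, so Serre duality gives
$$H^i(Q_n, (F^s)^*\Sigma(a)) \simeq H^{n-i}(Q_n, (F^s)^*\Sigma^* (-a)\otimes \O_{Q_n}(-n))^*.$$
Now $(F^s)^*\Sigma^* = ((F^s)^*\Sigma)^*$, and by the isomorphisms $\Sigma^*\simeq\Sigma(1)$ (or $\Sigma_\pm^*\simeq\Sigma_\mp(1)$, etc.) listed just after the definition of spinor bundles, $\Sigma^*$ is again a twist of a spinor bundle; pulling back by $F^s$ twists the $\O(1)$ to $\O(q)$. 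So the right-hand side is $H^{n-i}(Q_n,(F^s)^*\Sigma'(-a-q-n))^*$ for a suitable spinor bundle $\Sigma'$.

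The next step is to reduce the cohomological degree $n-i$ back to $i$. Since spinor bundles are ACM (Theorem~\ref{coh-spin}), all their twists have vanishing intermediate cohomology $H^j$ for $0<j<n$; so the only nonzero cohomologies live in degrees $0$, $n$, and — for the middle degree when $n$ is even — one must be slightly careful, but in all cases the relevant statement for $0<i<n$ reduces to comparing the single "interesting" cohomology group to its dual. Rather than fight with the ACM structure directly, the cleaner route is to use the exact sequences (\ref{(i)}), (\ref{(ii)}), (\ref{(iii)}): tensoring with $(F^s)^*(-)$ and twisting, each such sequence lets one shift the cohomological degree by one at the cost of replacing $\Sigma$ by another spinor bundle and shifting the twist by $q$ (from $\O(1)$ pulled back by $F^s$) together with an adjustment from the rank-$2^{m+1}$ trivial summand whose cohomology vanishes in the intermediate range. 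Iterating this $i-1$ times converts $H^{n-i}$ back to $H^1$ on both sides, or more efficiently one massages both sides of the Serre-duality isomorphism into the same shape. The bookkeeping that produces exactly $d_{n,s}=(n-1)\frac{q-1}{2}$ and the shifts $-(i-1)q+j$ versus $-(i-1)q-j-1$ is the content of the claimed symmetric form: the $-1$ on the right is the standard Serre-duality shift by the canonical bundle after the twists are balanced, and $d_{n,s}$ is precisely the centering twist making the duality symmetric, arising from $\tfrac12(\text{(contribution of }\Sigma^*\text{ twist)} + \text{(contribution of }\omega))$ spread across the $i-1$ applications of the ACM sequences.

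The main obstacle is the combinatorial accounting: keeping track of which spinor bundle $\ti\Sigma$ appears (it depends on the parity of $n$ and on $i$ through the alternation $\Sigma_-\leftrightarrow\Sigma_+$ in the sequences and in the duality $\Sigma^*$), and verifying that every auxiliary cohomology group introduced by the trivial summands $\O_{Q_n}^{2^{m+1}}$ in (\ref{(i)})--(\ref{(iii)}) indeed vanishes in the degree range $0<i<n$ where we apply the shift — this is where one genuinely uses $0<i<n$ and cannot let $i$ drift to $0$ or $n$. Once one fixes the convention that a single application of an ACM sequence sends $H^{j}((F^s)^*\Sigma(a)) \to H^{j-1}((F^s)^*\Sigma'(a+q))$ (valid for $1<j<n$, with the endpoint cases handled by $H^0$-vanishing from Theorem~\ref{coh-spin}), the identity falls out by matching $j=n-i$ on the left with $j=i$ after $n-2i$ shifts, or symmetrically. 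I would therefore organize the proof as: (1) state Serre duality and substitute $\Sigma^*$; (2) record the one-step ACM shift lemma with its degree restriction; (3) apply it the appropriate number of times to each side; (4) check the arithmetic of twists yields $d_{n,s}$ and the $j \leftrightarrow -j-1$ symmetry; (5) name $\ti\Sigma$ by chasing parities.
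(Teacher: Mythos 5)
Your proposal is correct and follows essentially the same route as the paper: the paper likewise combines Serre duality and the twist-duality $\Sigma^*\simeq\ti{\Sigma}(1)$ with repeated use of the Frobenius pullbacks of the sequences (\ref{(i)})--(\ref{(iii)}) (whose middle terms are sums of line bundles with no intermediate cohomology) to shift the cohomological degree, the only difference being that the paper shifts $H^i$ to $H^{1}$ and then to $H^{n-1}$ before dualizing, rather than dualizing first. Just drop the aside suggesting that ACM-ness of $\Sigma$ controls the intermediate cohomology of $(F^s)^*\Sigma$ --- the Frobenius pullback is not ACM, which is exactly why the exact-sequence route you settle on is the one that works.
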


\begin{proof}
Let us first prove the lemma for $i=1$.  Taking Frobenius pull backs of
sequences (\ref{(i)}), (\ref{(ii)}), (\ref{(iii)}) and twisting by
$\O_{Q_n}(j)$ we get the following isomorphisms
$$H^i(Q_n, (F^s)^*\Sigma (q+j))\simeq H^{i+1}(Q_n, (F^s)^*\Sigma _1 (j)),$$
for some spinor bundle $\Sigma _1$, $0<i<n-1$ and all integers $j$.
Hence we get
$$H^1(Q_n, (F^s)^*\Sigma (j))\simeq H^{2}(Q_n, (F^s)^*\Sigma_1(j-q))\simeq
\dots \simeq H^{n-1}(Q_n, (F^s)^*\Sigma_{n-2}(j-(n-2)q))$$ for some spinor
bundles $\Sigma _1, \dots , \Sigma _{n-2}$. Now  using the Serre duality we have
$$H^{n-1}(Q_n, (F^s)^*\Sigma_{n-2}(j-(n-2)q))\simeq H^1(Q_n, (F^s)^*\ti \Sigma
((n-1)(q-1)-j-1))^*$$ for some spinor bundle $\ti \Sigma$, which
proves the lemma for $i=1$.

In general, there exist some spinor bundles $\Sigma_1, \ti
\Sigma_1$ and $\ti \Sigma$ such that
$$\begin{array}{rl}
H^i(Q_n, (F^s)^*\Sigma (j))\simeq & H^1(Q_n, (F^s)^*\Sigma _1(j+(i-1)q))
\simeq H^1(Q_n, (F^s)^*\ti\Sigma_1 (2d_{n,s}-(i-1)q-j-1))^*\\
\simeq & H^i(Q_n, (F^s)^*\ti\Sigma (2d_{n,s}-2(i-1)q-j-1))^*.
\end{array}$$
\end{proof}

From the proof of the lemma it is clear that we can easily determine
dependence of $\ti \Sigma$ on $\Sigma$ but we need to consider some
cases depending on $n \pmod{4}$.  More precisely, $\ti \Sigma =\Sigma$
if $n$ is odd or $n$ is divisible by $4$ and $\ti \Sigma$ is the
opposite spinor bundle otherwise (at least for $i=1$).
\medskip

Similarly as above we have the following duality:

\begin{Lemma}\label{2-spin-duality}
Let $\Sigma_1$ and $\Sigma _2$ be spinor bundles on $Q_n$ (possibly
equal).  Then for all $0<i<n$ there exists some spinor bundles
$\ti \Sigma_1$ and $\ti \Sigma _2$ such that for all integers $j$ we
have the following duality
$$H^i(Q_n, \Sigma_1\otimes (F^s)^*\Sigma_2 (d_{n,s}-(i-1)q+j)) \simeq
H^i(Q_n,\ti \Sigma_1\otimes (F^s)^*\ti\Sigma_2
(d_{n,s}-(i-1)q-j))^*.$$
\end{Lemma}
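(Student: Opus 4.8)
The plan is to mimic the proof of Lemma~\ref{spin-duality} almost verbatim, the only change being that one of the spinor-bundle factors now carries no Frobenius twist and simply rides along for the whole computation.

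First I would reduce to the case $i=1$ by the same telescoping argument. Taking Frobenius pull-backs of the basic sequences (\ref{(i)}), (\ref{(ii)}), (\ref{(iii)}), tensoring with the fixed bundle $\Sigma_1$ and twisting by $\O_{Q_n}(j)$, one gets isomorphisms
$$H^i(Q_n,\Sigma_1\otimes (F^s)^*\Sigma_2(q+j))\simeq H^{i+1}(Q_n,\Sigma_1\otimes (F^s)^*\Sigma_2'(j))$$
for a suitable spinor bundle $\Sigma_2'$, valid for $0<i<n-1$ and all $j$; iterating turns an $H^1$ into an $H^{n-1}$ with the twist shifted by $(n-2)q$, and turns a general $H^i$ into an $H^1$ with the twist shifted by $(i-1)q$. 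Here the point that makes the argument go through is that (\ref{(i)})--(\ref{(iii)}) are exact sequences of \emph{vector bundles}, so tensoring by the locally free sheaf $\Sigma_1$ keeps them exact and the connecting maps in cohomology behave as before.

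Next I would apply Serre duality to the $H^{n-1}$ term: since $\Sigma_1^*$ is again a twisted spinor bundle by the isomorphisms listed after the description of $\Spin(n+2)$ (namely $\Sigma_1^*\simeq\Sigma_1(1)$ in the odd and $n=4m$ cases, and the opposite one twisted by $\O(1)$ when $n=4m+2$), and $(F^s)^*$ commutes with taking duals, one has $(\Sigma_1\otimes(F^s)^*\Sigma_2)^*\simeq\ti\Sigma_1\otimes(F^s)^*\ti\Sigma_2(\text{twist})$ for appropriate spinor bundles $\ti\Sigma_1,\ti\Sigma_2$; the canonical bundle of $Q_n$ is $\O_{Q_n}(-n)$. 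Bookkeeping the twists exactly as in Lemma~\ref{spin-duality} — using $d_{n,s}=(n-1)\frac{q-1}{2}$ and the relation $n-1=(q-1)+\text{(stuff)}$ carefully absorbed into $d_{n,s}$ — gives the stated duality with the symmetric centre $d_{n,s}-(i-1)q$ and the reflection $j\mapsto -j$. I expect the main (and essentially only) obstacle to be getting this twist arithmetic right: one must check that the total shift coming from the $(n-2)q$ of the telescope, the $-n$ from the canonical bundle, the $+1$'s from the self-duality isomorphisms of the spinor bundles, and the $2d_{n,s}$ accumulated when one finally collapses back from $H^1$ to $H^i$, all combine so that the reflection is about $d_{n,s}-(i-1)q$ and sends $j$ to $-j$ (rather than to $-j-1$ as in the single-bundle lemma, the discrepancy coming from the extra untwisted self-dual factor). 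Everything else is formal.
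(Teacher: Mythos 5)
Your argument is correct and is essentially the paper's own proof (which is likewise only a sketch for $i=1$): tensor the Frobenius pull-backs of (\ref{(i)})--(\ref{(iii)}) with $\Sigma_1$, telescope from $H^1$ up to $H^{n-1}$ using the fact that the middle terms $\Sigma_1(j)^{\oplus 2^{[N/2]}}$ have no intermediate cohomology (Theorem \ref{coh-spin} --- this ACM property, not merely the exactness of the tensored sequences, is what makes the connecting maps isomorphisms), and then apply Serre duality with $\omega_{Q_n}=\O_{Q_n}(-n)$ together with the self-duality isomorphisms of spinor bundles. Your identification of the extra $+1$ coming from $\Sigma_1^*\simeq\ti\Sigma_1(1)$ as the source of the reflection being $j\mapsto -j$ here versus $j\mapsto -j-1$ in Lemma \ref{spin-duality} is exactly right.
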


\begin{proof}
The proof of the lemma is similar to that of Lemma \ref{spin-duality},
and so we just sketch it for $i=1$.  We can easily show the following
isomorphisms
$$H^1(Q_n, \Sigma_1\otimes (F^s)^*\Sigma_2 (j))\simeq \dots \simeq
H^{n-1}(Q_n, \Sigma_1'\otimes (F^s)^*\Sigma_2' (j-(n-2)q))$$ for some
spinor bundles. Finally, using the Serre duality we have
$$H^{n-1}(Q_n, \Sigma_1'\otimes (F^s)^*\Sigma_2' (j-(n-2)q))\simeq
H^1(Q_n, \ti \Sigma_1\otimes (F^s)^*\ti\Sigma_2 ((n-1)(q-1)-j))^*.$$
\end{proof}

Similarly as above one can easily find $\ti \Sigma_1$ and $\ti
\Sigma_2$ corresponding to $\Sigma_1$ and $\Sigma _2$.

\subsection{D-quasi-affinity of quadrics}

Let $L$ be a line bundle on a smooth projective variety $X$ and let
$\Diff _X(L)$ be the $\O_X$-bimodule of differential operators from
$L$ to $L$ (see \cite[1.1]{Ha}).  $X$ is called
\emph{D(L)-quasi-affine} if any $\O_X$-quasi-coherent $\Diff
_X(L)$-module is $\Diff_X(L)$-generated by its global sections.  We
say that $X$ is \emph{D(L)-affine} if it is D(L)-quasi-affine and for
any $\O_X$-quasi-coherent $\Diff _X(L)$-module $\M$ we have $H^i
(X,\M)=0$ for $i>0$.

\begin{Proposition} \label{quasi-aff}
Let $j$ be a non-negative integer. Then any smooth projective quadric
$Q_n$ is $D(\O_{Q_n}(j))$-quasi-affine.
\end{Proposition}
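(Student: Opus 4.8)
The plan is to use the general criterion for $D(L)$-quasi-affinity in terms of cohomology vanishing after applying Frobenius push-forwards. Recall from Haastert's work that $\Diff_X(L) = \bigcup_s \Diff^s_X(L)$, where $\Diff^s_X(L) = \Hom_{X^{(s)}}(L,L)$, and that $\Diff^s_X(L)$ is an $\O_{X^{(s)}}$-module whose underlying sheaf is $(F^s)_* L^\vee \otimes (F^s)_* L$ (suitably interpreted). A standard reduction (as in \cite{Ha}, or \cite[Proposition 3.4]{HKR}) shows that $X$ is $D(L)$-quasi-affine provided that for every $s\gg 0$ and every line bundle $M$ on $X$, one has $H^i\bigl(X, (F^s)_*L^\vee \otimes (F^s)_*L \otimes M\bigr)$ controlled by global generation, which in turn follows if the summands of $(F^s)_*\O_{Q_n}(j)$ (and its dual) generate $D^b(Q_n)$ in the appropriate sense, i.e. every sufficiently positive twist of every such summand is globally generated and has no higher cohomology. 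So the first step is to reduce the statement to: for each line bundle $L$ and each $s$, the bundle $(F^s)_*L$ is a direct sum of line bundles and twisted spinor bundles, each of which becomes globally generated and acyclic after a bounded twist.

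Next I would invoke Theorem~\ref{ACM-Q}: since $(F^s)^*$ of any twist of $\O_{Q_n}$ or of a spinor bundle is again ACM (Frobenius pull-back preserves the ACM property, as higher cohomology of twists is computed from the graded module and $(F^s)^*$ just rescales the grading), and since $(F^s)_*$ of an ACM bundle is ACM — this is because $H^i(Q_n, (F^s)_*E \otimes \O(t)) = H^i(Q_n, E \otimes \O(p^s t))$ for all $i$ by the projection formula and affineness of $F^s$ — we conclude that $(F^s)_*\O_{Q_n}(j)$ is ACM, hence by Theorem~\ref{ACM-Q} a direct sum of line bundles $\O_{Q_n}(a)$ and twisted spinor bundles $\Sigma(a)$. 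Now both line bundles and (by Theorem~\ref{coh-spin}) spinor bundles on $Q_n$ are, after a bounded positive twist, globally generated with vanishing higher cohomology: for line bundles this is classical, and for $\Sigma(t)$ with $t\ge 1$ Theorem~\ref{coh-spin} gives $h^0(Q_n,\Sigma(t)) = 2^{[N/2]}\binom{t+n-1}{n}>0$ and ACM-ness plus Serre duality kills the intermediate cohomology, while top cohomology vanishes for $t$ large.

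The remaining step is to assemble these facts into the generation statement for $\Diff_X(L)$-modules. Given an $\O_{Q_n}$-quasi-coherent $\Diff_{Q_n}(L)$-module $\M$, it is in particular a module over each $\Diff^s_{Q_n}(L) = \Hom_{Q_n^{(s)}}(L,L)$, and by adjunction a quasi-coherent sheaf on $Q_n^{(s)}$ together with compatible module structures; the point (following the argument behind \cite[Proposition 3.4]{HKR}) is that $\M$ is generated by global sections as a $\Diff^s$-module once $(F^s)_*L^\vee \otimes (F^s)_* L \otimes (\text{any quasi-coherent sheaf})$ — equivalently, once every summand of $(F^s)_*L$ tensored with every summand of $((F^s)_*L)^\vee$ — admits enough global sections, and letting $s\to\infty$ the $p$-filtration exhausts $\Diff_{Q_n}(L)$. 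Since the summands of $(F^s)_*L$ and of its dual are (twisted) spinor bundles and line bundles, and a sufficiently high twist of each is globally generated, one deduces that the natural map $\Diff_{Q_n}(L)\otimes_k H^0(Q_n,\M) \to \M$ is surjective. The main obstacle is the last step: carefully verifying that the global-generation property of the \emph{individual summands} of $(F^s)_*L$ (for all $s$) implies $\Diff_{Q_n}(L)$-generation of an \emph{arbitrary} quasi-coherent $\Diff_{Q_n}(L)$-module by global sections — this requires tracking the compatibility of the $p$-filtration $\Diff^1 \subset \Diff^2 \subset \cdots$ with the module structures and passing to the colimit, which is where one must be most careful (and is precisely the subtlety the author flags in the Introduction concerning the erroneous argument in \cite{AK}).
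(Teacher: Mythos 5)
There is a genuine gap, and it sits exactly where the real content of the proposition lies. The criterion you extract from Haastert is not the right one: you reduce to showing that ``every sufficiently positive twist of every summand of $(F^s)_*L$ is globally generated and acyclic,'' but that statement is true for any vector bundle on any projective variety (Serre) and therefore cannot prove D-quasi-affinity. The correct reduction, via \cite[Propositions 2.3.3 and 2.3.4]{Ha}, is that for every positive integer $t$ there exists $s_0$ such that for all $s\ge s_0$ the \emph{untwisted dual} $\bigl(F^s_*(\O_{Q_n}(j+t))\bigr)^*$ is globally generated as an $\O_{Q_n}$-module. Writing $F^s_*(\O_{Q_n}(j+t))\simeq \bigoplus \O_{Q_n}(a_i)\oplus\bigoplus\Sigma_i(b_i)$ by Theorem~\ref{ACM-Q} (this part of your argument is fine and matches the paper), global generation of the dual forces you to prove the numerical bounds $a_i\le 0$ and $b_i\le 0$ once $p^{s_0}>j+t$. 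Your proposal never addresses these bounds.

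For the line-bundle summands the bound is easy: the inclusion $\O_{Q_n}(a_i)\hookrightarrow F^s_*(\O_{Q_n}(j+t))$ gives by adjunction a non-zero map $\O_{Q_n}(p^sa_i)\to\O_{Q_n}(j+t)$, whence $p^sa_i\le j+t<p^{s_0}$ and $a_i\le 0$. For the spinor summands, the analogous adjunction produces a non-zero section of $(F^s)^*\Sigma_i^*(p^{s_0}-1-p^sb_i)$, and concluding $b_i\le 0$ requires the non-trivial vanishing $H^0\bigl(Q_n,(F^s)^*\Sigma_i^*(-1)\bigr)=0$, which the paper proves by restricting to hyperplane-section quadrics and inducting on $n$. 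This vanishing is the substantive step of the proof and is entirely absent from your proposal. Finally, the subtlety you flag at the end --- compatibility of the $p$-filtration with module structures in the colimit --- is a red herring here: that issue (and the error in \cite{AK}) concerns the cohomology vanishing needed for D-\emph{affinity}, not D-quasi-affinity, which Haastert's two propositions dispose of completely once the global generation of the duals is established.
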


\begin{proof}
The proof is analogous to the proof of \cite[Satz 3.1]{Ha}.  By
\cite[Proposition 2.3.3]{Ha} it is sufficient to show that for any
integer $t$ the module $\Diff (\O_{Q_n}(j))\otimes \O_{Q_n}(-t)$ is
$\Diff_X(\O_{Q_n}(j))$-generated by its global sections.  Therefore by
\cite[Proposition 2.3.4]{Ha} it is sufficient to show that for any
positive integer $t$ there exists $s_0$ such that for every $s\ge s_0$
$(F^s_*(\O_{Q_n}(j+t)))^*$ is globally generated as an
$\O_{Q_n}$-module.  Let us take as $s_0$ any integer such that
$p^{s_0}>j+t$. By Theorem \ref{ACM-Q} we can write
$$F^s_*(\O_{Q_n}(j+t))\simeq \bigoplus \O _{Q_n}(a_i)\oplus \bigoplus
\Sigma_i(b_i)$$ for some spinor bundles $\Sigma_i$ and some integers
$a_i$ and $b_i$. Hence we need to show that all $a_i$ and $b_i$ are
non-positive.

Note that $\O _{Q_n}(a_i)\hookrightarrow F^s_*(\O_{Q_n}(j+t))$ gives
rise to a non-zero map $(F^s)^*(\O _{Q_n}(a_i))=\O _{Q_n}(p^s a_i)\to
\O_{Q_n}(j+t)\subset \O_{Q_n}(p^{s_0}-1)$, so $a_i\le 0$.  Similarly,
$\Sigma _i(b_i)\hookrightarrow F^s_*(\O_{Q_n}(j+t))$ gives rise to a
non-zero map $(F^s)^*\Sigma_i(p^s b_i)\to \O_{Q_n}(j+t)\subset
\O_{Q_n}(p^{s_0}-1)$. This gives rise to a section of
$(F^s)^*\Sigma_i^*(p^{s_0}-1-p^sb_i)$. Now it is sufficient to show
that $H^0((F^s)^*\Sigma_i^*(-1))=0$ as then $b_i\le 0$.  But this can
be easily shown by restricting to quadrics of lower dimension and
induction on the dimension $n$ (cf.~the proof of Theorem
\ref{vanishing}).
\end{proof}

\section{Hilbert functions of some algebras}

In this section we study Hilbert functions of some finite dimensional
algebras in positive characteristic. Their geometric meaning will
become clear in Sections 3 and 4 (see the proof of Corollary
\ref{dir-sum-lb} and Theorem \ref{decomposition}).

\begin{Proposition} \label{diff-new}
Let $k$ be a field of characteristic $p>2$. Let  $0\le e<p$  be an
integer. Then for any $d\le (N+1)\cdot \frac{p-1}{2}-e$ the $d$th
grading of the ideal $((x_0^p,\dots,
x_N^p):(\sum_{i=0}^Nx_i^2)^{e})$ of $k[x_0,\dots ,x_N]$ is
contained in $(x_0^p,\dots, x_N^p, \sum_{i=0}^Nx_i^2)_d$.
\end{Proposition}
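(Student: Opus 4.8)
The plan is to read the statement inside the graded Artinian Gorenstein algebra $A:=k[x_0,\dots,x_N]/(x_0^p,\dots,x_N^p)$ and then, by a parity trick, to reduce it to a Lefschetz‑type property of a smaller monomial complete intersection equipped with a \emph{linear} Lefschetz element.

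Write $Q=\sum_{i=0}^Nx_i^2$ and $I=(x_0^p,\dots,x_N^p)$. Being a complete intersection, $A$ has one–dimensional socle, in degree $\sigma:=(N+1)(p-1)$, and the multiplication pairing $A_a\times A_{\sigma-a}\to A_\sigma\cong k$ is perfect for every $a$. Unwinding definitions, the inclusion $\bigl((I:Q^e)\bigr)_d\subseteq(I,Q)_d$ is exactly the assertion
$$\ker\bigl(\cdot Q^e\colon A_d\to A_{d+2e}\bigr)\ \subseteq\ Q\cdot A_{d-2},$$
and we must prove this for $d\le\sigma/2-e=(N+1)\frac{p-1}{2}-e$.

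Multiplication by each $x_i^2$ preserves the decomposition $A=\bigoplus_{\varepsilon\in(\ZZ/2)^{N+1}}A^\varepsilon$, where $A^\varepsilon$ is the span of the monomials whose exponent vector is $\equiv\varepsilon$ componentwise modulo $2$; hence so do $\cdot Q$ and $\cdot Q^e$, and it suffices to prove the inclusion inside each $A^\varepsilon$. The substitution $x^{\varepsilon+2\mathbf b}\mapsto y^{\mathbf b}$ identifies $A^\varepsilon$, as a graded vector space, with the monomial complete intersection
$$R\ :=\ k[y_0,\dots,y_N]/(y_0^{c_0},\dots,y_N^{c_N}),\qquad c_i=(p+1)/2\ \text{ or }\ (p-1)/2\ \text{ according as }\ \varepsilon_i=0\ \text{ or }\ 1$$
(so each $c_i<p$), carrying $\cdot Q^e$ to $\cdot\ell^e$ with $\ell:=y_0+\dots+y_N$ and sending $A^\varepsilon_d$ to $R_m$ with $m=(d-|\varepsilon|)/2$. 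Since the socle degree of $R$ is $\sigma_R=\sum_i(c_i-1)=(N+1)\frac{p-1}{2}-|\varepsilon|$, the range $d\le\sigma/2-e$ translates into $m\le(\sigma_R-e)/2$. Thus the proposition is reduced to the following statement: \emph{for $R$ as above, for every $0\le e<p$ and every $m\le(\sigma_R-e)/2$ one has} $\ker(\cdot\ell^e\colon R_m\to R_{m+e})\subseteq\ell\cdot R_{m-1}$.

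Finally one must establish this Lefschetz‑type statement for $R$. One is tempted to claim that $\cdot\ell^e$ is \emph{injective} in the indicated range — which would upgrade the conclusion to $(I:Q^e)_d=I_d$ — i.e.\ that $R$ has the Strong Lefschetz Property for $\ell$; but in characteristic $p$ this can fail (already $k[y_0,\dots,y_N]/(y_0^2,\dots,y_N^2)$ with $p=3$ has $\ell^p=\sum_iy_i^p=0$, and genuine failures with $e<p$ occur for larger $N$), so the real content is that the — possibly nonzero — kernel still lies inside $\ell R$. I would argue by induction on the number of variables. Writing $R=R'[T]/(T^{c_N})$ with $R'=k[y_0,\dots,y_{N-1}]/(y_0^{c_0},\dots,y_{N-1}^{c_{N-1}})$, $T=y_N$ and $\ell=\ell'+T$ where $\ell'=y_0+\dots+y_{N-1}$, an element $u=\sum_{j=0}^{c_N-1}u_jT^j$ with $u_j\in R'$ lies in $\ker(\cdot\ell^e)$ exactly when $(\ell'+T)^e\bigl(\sum_j u_jT^j\bigr)$ is divisible by $T^{c_N}$ — a triangular system of identities in $R'$ whose lowest component reads $(\ell')^eu_0=0$. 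Solving the system in increasing $T$‑degree, one constructs $w_0,w_1,\dots\in R'$ with $u-\ell\cdot\sum_jw_jT^j$ divisible by $T^{c_N}$, feeding the lower‑degree components $u_j$ into the inductive hypothesis for $R'$ and using that $\ell'+T$ is a nonzerodivisor in $R'[[T]]$ (it lies outside the unique associated prime $\mathfrak m'R'[[T]]$). The hypothesis $e<p$ is precisely what makes the multinomial coefficients in $\ell^e=(\sum_iy_i)^e$ units modulo $p$. The crux — which I expect to absorb essentially all of the work — is the degree bookkeeping: the inductive hypothesis for $R'$ only reaches degrees up to $(\sigma_R-(c_N-1)-e)/2$, falling short of the required range $m\le(\sigma_R-e)/2$ by $(c_N-1)/2$, so in this top band one cannot apply the induction to $u_0$ directly; handling it forces one to combine the induction on the number of variables with a secondary induction on $e$ and on the degree, the latter mediated by the Gorenstein duality of $R$ (which exchanges the asserted inclusion in degree $m$ with an annihilator statement in the complementary degree $\sigma_R-m$), and getting all these bookkeepings to fit together is where the real difficulty lies.
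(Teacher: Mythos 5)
Your reduction is correct and attractive: reading the statement in $A=k[x_0,\dots,x_N]/(x_0^p,\dots,x_N^p)$, decomposing $A$ by the parity of exponent vectors (which multiplication by $Q=\sum_i x_i^2$ preserves), and identifying each parity piece with $R=k[y_0,\dots,y_N]/(y_0^{c_0},\dots,y_N^{c_N})$ with $Q$ acting as the linear form $\ell=\sum_i y_i$, does faithfully translate the proposition into the statement $\ker(\cdot\,\ell^e\colon R_m\to R_{m+e})\subseteq \ell R_{m-1}$ for $m\le(\sigma_R-e)/2$. You are also right that one cannot hope for injectivity of $\cdot\,\ell^e$ in this range (Proposition \ref{diff} shows the kernel contains the classes of multiples of $Q^{p-e}$ there), so the containment of the kernel in $\ell R$ is the actual content.

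But that Lefschetz-type containment \emph{is} the entire theorem, and you do not prove it. Your induction on the number of variables, peeling off $T=y_N$, yields as its lowest-order consequence $(\ell')^e u_0=0$ in $R'$, but the inductive hypothesis for $R'$ only covers $m\le(\sigma_{R'}-e)/2=(\sigma_R-(c_N-1)-e)/2$, leaving a band of width $(c_N-1)/2$ of degrees in which you have no argument; you acknowledge this yourself and defer to an unspecified ``secondary induction on $e$ and on the degree, mediated by Gorenstein duality'' that is never carried out. That missing step is exactly where the paper's proof lives: after separating the single variable $x_0$ (the one-variable analogue of your parity split), it forms the combinations $W_{2j}$, and the key Lemma \ref{bound2} shows by a downward induction that each $W_{2j}$ is congruent to $y^{p-e+j}W'_{2j}$ modulo $(x_1^p,\dots,x_N^p)$ --- a step requiring both the invertibility of a binomial determinant (evaluated via Harris--Tu, and this is where the hypotheses $e<p$ and $d+e\le(N+1)\frac{p-1}{2}$ enter quantitatively) and an application of the inductive hypothesis in fewer variables with a \emph{shifted} exponent in place of $e$; only then does the congruence $y^{(p+1)/2}\equiv\pm x_0^{p+1}\equiv 0$ modulo $(x_0^p,\dots,x_N^p,\sum_i x_i^2)$ finish the argument. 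Until you supply an argument of comparable substance for the top band of degrees, what you have is a correct reformulation plus a plan, not a proof.
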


\begin{proof}
We need to prove that if for some homogeneous polynomial $h\in
k[x_0,\dots ,x_N]_d$ with $d+e\le (N+1)\cdot \frac{p-1}{2}$ we
have
\begin{equation}
(\sum_{i=0}^Nx_i^2)^{e} h=\sum_{i=0}^N g_i x_i^p \label{eq1}
\end{equation}
for some homogeneous polynomials $g_i\in k[x_0,\dots ,x_N]$ then
$$h\in (x_0^p,\dots, x_N^p, \sum_{i=0}^Nx_i^2).$$
We can assume that $h=h_0+h_1x_0+...+h_{p-1}x_0^{p-1}$, where
$h_0, ... , h_{p-1}$ are polynomials in $x_1,..., x_N$. Moreover,
we can also assume that all $h_i$s  are of degree less than $p$ in
each variable. Let us set $y=\sum_{i=1}^Nx_i^2$. To simplify
exposition we divide the proof into two cases: first we deal with
the ``even'' part of $h$ and later we deal with the ``odd'' part
of $h$.

\medskip

Let us set
$$
W_{2j}={e\choose j}h_0+{e\choose j-1}y h_2+\dots + {e\choose 0}y^j
h_{2j}
$$
for $j=0,\dots ,\frac{p-1}{2}$. Note that in $\sum_{i=0}^N g_i
x_i^p$ treated as a polynomial in $x_0$, coefficients at $x_0^i$
for $i<p$ are in the ideal $(x_1^p,..., x_N^p)$. Comparing
coefficients in (\ref{eq1}) at even powers of $x_0$ we get
therefore the  following equalities:
\begin{equation}
\label{eq2} y^{e-j}W_{2j}  \equiv 0\pmod{(x_1^p,\dots ,x_N^p)}
\end{equation}
for $j=0,\dots, \frac{p-1}{2}$ (note that if $j>e$ then all the
terms of  $W_{2j}$ are divisible by $y^{j-e}$ and we still have a
polynomial on the left hand side of the above equality).

\begin{Lemma} \label{h_j}
We have
$$y^jh_{2j}=(-1)^{j}{e+j-1\choose e-1}W_0+ (-1)^{j-1}{e+j-2\choose e-1}W_2+\dots
+ (-1)^{0}{e-1\choose e-1}W_{2j}.
$$
\end{Lemma}

\begin{proof}
Let $a_0, ..., a_{p-1}$ be arbitrary elements in a fixed ring $R$.
Consider the following equality of formal power series in
$R[[x]]$:
$$a_0+a_1x +\dots +a_{p-1}x^{p-1}=(1+x^2)^{-e}\left( (1+x^2)^e(a_0+a_1x +\dots +a_{p-1}x^{p-1})\right).$$
Now use expansion
$$(1+x^2)^{-e}=\sum _{j\ge 0} {-e \choose j}x^{2j}=\sum _{j\ge 0} (-1)^j{e +j-1 \choose j}x^{2j}$$
and multiply this by expansion of the product
$(1+x^2)^e(a_0+a_1x_0 +\dots +a_{p-1}x_0^{p-1})$. Then comparing
appropriate coefficients of the product we compute $a_j$. The
assertion in the lemma is a special case of such equality when
comparing coefficients at even powers of $x$.
\end{proof}

\begin{Lemma} \label{combination}
If $j\ge e-\frac{p+1}{2}$ then $ W_{2j}$  is a linear combination
of  $W_{{2j+2}}$, \dots , $W_{p-1}$ and
$y^{\frac{p-1}{2}-j}h_{p-1-2j}$, \dots , $y^{\frac{p-1}{2}}
h_{p-1}$.
\end{Lemma}

\begin{proof}
Set $m=\frac{p-1}{2}-j$. By Lemma \ref{h_j} we have
$$\sum _{k\le j } (-1)^{k} {e+s-1-k \choose e-1}W_{2k}
=  y^{s} h_{s}-\sum _{k>j } (-1)^{k} {e +s-1-k\choose e-1}W_{2k}$$
for $s=m, ..., m+j$. The determinant of this linear system of
equations is up to sign equal to
$$\left|
\begin{array}{ccc}
{e-1+m\choose {e-1} } & ... & {e-1+m-j\choose {e-1}}\\
...&...& ...\\
{e-1+m+j\choose {e-1}}& ... & {e-1+m \choose {e-1}}\\
\end{array} \right|= \left|
\begin{array}{ccc}
{e-1+m-j\choose {e-1-j} } & ... & {e-1+m-j\choose {e-1}}\\
...&...& ...\\
{e-1+m\choose {e-1-j}}& ... & {e-1+m \choose {e-1}}\\
\end{array} \right|= \left|
\begin{array}{ccc}
{e-1+m-j\choose {e-1-j} } & ... & {e-1+m-j\choose {e-1}}\\
...&...& ...\\
{e-1+m-j\choose {e-1-2j}}& ... & {e-1+m-j \choose {e-1-j}}\\
\end{array} \right|$$
where the first equality follows by inductive subtracting columns,
and the second equality follows by inductive subtracting rows. The
last determinant can be computed and by \cite{HT} it is equal to
$$\prod _{i=0}^{j}
\frac{ (e-j+m-1+i)\,! \, i\,!}{ (m-1+i)\,!   \, (e-i)\,!}.$$ It is
non-zero when $e+m-1\le p-1$, which is equivalent to $j\ge
e-\frac{p+1}{2}$. In this case we can solve the above linear
system of equations obtaining the required assertion.
\end{proof}

\begin{Lemma} \label{bound2}
If  $e-\frac{p-1}{2}\le j \le \frac{p-1}{2}$ then there exists a
polynomial $W'_{2j}$ such that $W_{2j}-y^{p-e+j}W'_{2j}$ lies in
the ideal $(x_1^p,\dots ,x_N^p)$.
\end{Lemma}

\begin{proof}
Assume that the assertion of the lemma is false and choose the
largest $j \le \frac{p-1}{2}$ for which it fails. If $j\ge e$ then
we get a contradiction with (\ref{eq2}) as we know that
$W_{2j}\equiv 0\pmod{(x_1^p,\dots ,x_N^p)}$. Therefore $j<e$. By
assumption $j\ge e-\frac{p-1}{2}$ and for every $m>j$ there exists
a polynomial $W'_{2m}$ such that $W_{2m}\equiv
y^{p-e+m}W'_{2m}\pmod{(x_1^p,\dots ,x_N^p)}$. By Lemma
\ref{combination} there exist constants
$b_0,...,b_{\frac{p-1}{2}}$ such that
\begin{equation} \label{eq3}
W_{2j} = b_{0} y^{\frac{p-1}{2}-j}h_{p-1-2j}+ \dots+ b_{j}
y^{\frac{p-1}{2}} h_{p-1} + b_{j+1} W_{{2j+2}} +\dots +
b_{\frac{p-1}{2}}W_{p-1}.
\end{equation}
If $2j\le e- \frac{p-1}{2}$ then $\frac{p-1}{2}-j\ge p-e+j$ and
for the polynomial
$$W_{2j} '= b_0
y^{\frac{p-1}{2}-j-(p-e+j)}h_{p-1-2j}+ \dots+ b_j
y^{\frac{p-1}{2}-(p-e+j)} h_{p-1}+b_{j+1} yW'_{{2j+2}} +\dots +
b_{\frac{p-1}{2}}y^{\frac{p-1}{2}-j} W'_{p-1}.
$$
we have $W_{2j}\equiv y^{p-e+j}W'_{2j}\pmod{(x_1^p,\dots
,x_N^p)}$.  Therefore $2j> e- \frac{p-1}{2}$. We have
$$ y^{e-j} W_{2j} \equiv
0\pmod{(x_1^p,\dots ,x_N^p)}.
$$
and from equations (\ref{eq2}) we see that $ y^{e-j} W_{2m} \equiv
0\pmod{(x_1^p,\dots ,x_N^p)}$ for $m>j$. Hence by (\ref{eq3}) we
have
$$
y^{e+\frac{p-1}{2}-2j} (b_0 h_{p-1-2j}+ \dots+ b_j y^{j}
h_{p-1})\equiv 0\pmod{(x_1^p,\dots ,x_N^p)}.$$ But we know that
$0\le e+\frac{p-1}{2} -2j<p$ and
$$\left( e+\frac{p-1}{2} -2j\right)+\deg h_{p-1-2j}=d+e-\frac{p-1}{2}\le N\frac{p-1}{2},$$
so we can apply the induction assumption. Therefore there exists a
polynomial $P$ such that
$$b_0h_{p-1-2j}+ \dots+ b_j
y^{j} h_{p-1}\equiv y^{p-e+j-(\frac{p-1}{2} -j)} P
\pmod{(x_1^p,\dots ,x_N^p)} .
$$
If we multiply this equality by $y^{\frac{p-1}{2}-j}$ and use
equation (\ref{eq3}) and we see that
$$W_{2j}\equiv y^{p-e+j}\left(P+b_{j+1} yW'_{2j+2}+\dots + b_{\frac{p-1}{2}}y^{\frac{p-1}{2}-j}W_{p-1}'\right)
\pmod{(x_1^p,\dots ,x_N^p)}.
$$
This gives a contradiction with our choice of $j$.
\end{proof}

\begin{Lemma}
$$\sum _{j=0}^{\frac{p-1}{2}} (-1)^jy^jh_{2j}={{e+\frac{p-1}{2}}\choose e}W_0+
{{e+\frac{p-1}{2} -1}\choose e}W_2+\dots + {e\choose e}W_{p-1}.$$
\end{Lemma}

\begin{proof}
Use  Lemma \ref{h_j} and equality
$${n\choose n}+{n+1\choose n}+\dots +{n+m\choose n}={n+m+1\choose n+1}.$$
\end{proof}

Since ${{e+\frac{p-1}{2} -j}\choose e}$ vanishes if
$j<e-\frac{p-1}{2}$ we have
$$
h_0+h_2x_0^2+...+h_{p-1}x_0^{p-1}\equiv \sum
_{j=0}^{\frac{p-1}{2}} (-1)^jy^jh_{2j} = \sum _{j \ge
e-\frac{p-1}{2}} {{e+\frac{p-1}{2} -j}\choose e}W_{2j}
\pmod{(x_1^p,\dots x_N^p, \sum_{i=0}^N x_i^2)}.
$$
Now by Lemma \ref{bound2} for any $j \ge e-\frac{p-1}{2}$ there
exists $W'_{2j}$ such that $W_{2j}\equiv
y^{p-e+j}W'_{2j}\pmod{(x_1^p,\dots ,x_N^p)}$. But $p-e+j\ge
\frac{p+1}{2}$ and
$$y^{\frac{p+1}{2}}\equiv \pm \, x_0^{p+1}\equiv 0 \pmod{(x_0^p,\dots x_N^p, \sum_{i=0}^N x_i^2)}.$$
Therefore $ h^{\rm even}=h_0+h_2x_0^2+...+h_{p-1}x_0^{p-1} $
belongs to the ideal $(x_0^p,\dots x_N^p, \sum_{i=0}^N x_i^2)$.

\bigskip
In a similar way we deal with the remaining part of $h$. First,
let us set
$$
W_{2j+1}={e\choose j}h_1+{e\choose j-1}y h_3+\dots + {e\choose
0}y^j h_{2j+1}
$$
for $j=0,\dots ,\frac{p-3}{2}$. Comparing coefficients in
(\ref{eq1}) at odd powers of $x_0$ we get
\begin{equation}
\label{eq2'} y^{e-j}W_{2j+1}  \equiv 0\pmod{(x_1^p,\dots ,x_N^p)}.
\end{equation}
for $j=0,\dots ,\frac{p-3}{2}$.

\begin{Lemma} \label{h_j-odd}
We have
$$y^jh_{2j+1}=(-1)^{j}{e+j-1\choose e-1}W_1+ (-1)^{j-1}{e+j-2\choose e-1}W_3+\dots
+ (-1)^{0}{e-1\choose e-1}W_{2j+1}.
$$
\end{Lemma}

\begin{Lemma} \label{combination-odd}
If $j\ge e-\frac{p+1}{2}$ then $ W_{2j+1}$  is a linear
combination of  $W_{{2j+3}}$, \dots , $W_{p-2}$ and
$y^{\frac{p-3}{2}-j}h_{p-2-2j}$, \dots , $y^{\frac{p-3}{2}}
h_{p-2}$.
\end{Lemma}

\begin{Lemma} \label{bound2-odd}
If  $e-\frac{p+1}{2}\le j \le \frac{p-3}{2}$ then there exists a
polynomial $W'_{2j+1}$ such that $W_{2j+1}-y^{p-e+j}W'_{2j+1}$
lies in the ideal $(x_1^p,\dots ,x_N^p)$.
\end{Lemma}

\begin{Lemma}
$$\sum _{j=0}^{\frac{p-3}{2}} (-1)^jy^jh_{2j+1}={{e+\frac{p-3}{2}}\choose e}W_1+
{{e+\frac{p-3}{2} -1}\choose e}W_3+\dots + {e\choose e}W_{p-2}.$$
\end{Lemma}

Since ${{e+\frac{p-3}{2} -j}\choose e}$ vanishes if
$j<e-\frac{p+3}{2}$, we have
$$
h_1+h_3x_0^2+...+h_{p-2}x_0^{p-3}\equiv \sum
_{j=0}^{\frac{p-3}{2}} (-1)^jy^{j}h_{2j+1} = \sum _{j \ge
e-\frac{p+1}{2}} {{e+\frac{p-3}{2} -j}\choose e}W_{2j+1}
\pmod{(x_1^p,\dots x_N^p, \sum_{i=0}^N x_i^2)}.
$$
Now by Lemma \ref{bound2-odd} for any $j \ge e-\frac{p+1}{2}$
there exists $W'_{2j+1}$ such that $W_{2j+1}\equiv
y^{p-e+j}W'_{2j+1}\pmod{(x_1^p,\dots ,x_N^p)}$. But $p-e+j\ge
\frac{p-1}{2}$ and
$$y^{\frac{p-1}{2}}\equiv \pm \, x_0^{p-1} \pmod{(x_0^p,\dots x_N^p, \sum_{i=0}^N x_i^2)}.$$
Therefore $ h^{\rm odd}=h_1x_0+h_3x_0^3+...+h_{p-2}x_0^{p-2} $
belongs to the ideal $(x_0^p,\dots x_N^p, \sum_{i=0}^N x_i^2)$.

Since $h=h^{\rm even}+h^{\rm odd}$, this finishes proof of
the proposition.
\end{proof}

\medskip

The following corollary of Proposition \ref{diff-new} is the main
step in our proof of Theorem \ref{vanishing}:

\begin{Proposition} \label{diff}
Let $k$ be a field of characteristic $p>2$. Let  $0\le e<p$  be an
integer. Then for any $d\le (N+1)\cdot \frac{p-1}{2}-e$ we have
$$((x_0^p,\dots, x_N^p):(\sum_{i=0}^Nx_i^2)^{e})_d=(x_0^p,\dots,
x_N^p,(\sum_{i=0}^Nx_i^2)^{p-e})_d.
$$
in $k[x_0,\dots ,x_N]$.
\end{Proposition}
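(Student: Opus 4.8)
The plan is to deduce this from Proposition \ref{diff-new} by an elementary ideal-theoretic argument in the polynomial ring $R=k[x_0,\dots,x_N]$, writing $y=\sum_{i=0}^N x_i^2$ and $I=(x_0^p,\dots,x_N^p)$. First I would establish the inclusion $(x_0^p,\dots,x_N^p,y^{p-e})\subseteq ((x_0^p,\dots,x_N^p):y^e)$, which is easy: $x_i^p\cdot y^e\in I$ trivially, and $y^{p-e}\cdot y^e=y^p\equiv \sum_i x_i^{2p}\pmod{I}$ by the Frobenius (characteristic $p$), so $y^p\in I$. Note this inclusion holds in every degree, with no restriction on $d$; the content of the proposition is the reverse inclusion in the range $d\le (N+1)\frac{p-1}{2}-e$.

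For the reverse inclusion I would argue by descending induction on $e$ (for fixed degree bound), or more directly as follows. Take $h\in R_d$ with $y^e h\in I$ and $d+e\le (N+1)\frac{p-1}{2}$. By Proposition \ref{diff-new}, $h\in (x_0^p,\dots,x_N^p,y)$, so write $h=\sum_i f_i x_i^p + g y$ with $g\in R_{d-2}$. Then $y^e g y = y^{e+1} g$ and, multiplying the relation $y^e h\in I$ through, we get $y^{e+1}g\in I$ as well (since $y^e\sum f_i x_i^p\in I$). Now $g$ has degree $d-2$ and $(d-2)+(e+1)=d+e-1\le (N+1)\frac{p-1}{2}-1 < (N+1)\frac{p-1}{2}$, so if $e+1\le p-1$ we may apply the inductive hypothesis (descending in the quantity $p-e$, i.e. ascending in $e$) to conclude $g\in (x_0^p,\dots,x_N^p, y^{p-e-1})$; substituting back, $h\equiv gy\in (x_0^p,\dots,x_N^p, y^{p-e})\pmod I$, as desired. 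The base case is $e=p-1$: there Proposition \ref{diff-new} gives $h\in(x_0^p,\dots,x_N^p,y)=(x_0^p,\dots,x_N^p,y^{p-e})$ directly, since $p-e=1$.

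The step I expect to require the most care is the bookkeeping of the degree bounds through the induction: one must check that replacing $(h,e)$ by $(g,e+1)$ keeps the pair inside the hypothesis region of Proposition \ref{diff-new}, and that the recursion terminates correctly at $e=p-1$ rather than running past it. The arithmetic above shows the degree bound is in fact slack by $1$ at each step, so there is no difficulty, but it should be spelled out. One should also be slightly careful that the decomposition $h=\sum f_i x_i^p+gy$ is not unique and that $g$ is only well-defined modulo the syzygies among $x_0^p,\dots,x_N^p,y$; this causes no problem since we only need the membership $y^{e+1}g\in I$, which follows for any choice of $g$ from $y^e h\in I$ and $y^e x_i^p\in I$. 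Everything else is formal.
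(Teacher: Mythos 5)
Your argument is correct and is essentially the paper's own proof: both reduce to the trivial inclusion $y^p\in(x_0^p,\dots,x_N^p)$ in one direction and, in the other, iterate Proposition \ref{diff-new} to peel off factors of $y$ (raising $e$ by one and lowering the degree by two at each step, which keeps the hypothesis of Proposition \ref{diff-new} satisfied) until $e$ reaches $p-1$. Your formulation as an induction ascending in $e$ with base case $e=p-1$ is just a cleaner packaging of the paper's ``continuing in this way'' step.
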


\begin{proof}
We need to prove that for every homogeneous degree $d$ (with
$d+e\le (N+1)\cdot \frac{p-1}{2}$) polynomial $h$ such that
$$(\sum_{i=0}^Nx_i^2)^{e} h\equiv 0 \pmod{(x_0^p,\dots x_N^p)}$$
we have $h\in(x_0^p,\dots, x_N^p,(\sum_{i=0}^Nx_i^2)^{p-e})_d. $

If $e=0$ then the assertion is trivial so we can assume that $e\ge
1$.

The proof is by induction on $N$. For $N=0$ we have $2(d+e)\le
p-1$. But if $h\ne 0$ then counting degrees we get $d+2e\ge p$, a
contradiction. Now assume that the assertion holds for polynomials
in $N$ variables (i.e., for $N-1$).

If $e=p-1$ then the required assertion follows from Proposition
\ref{diff-new}. Otherwise, by Proposition \ref{diff-new} there
exists a polynomial $h'$ such that
$$h\equiv (\sum_{i=0}^Nx_i^2)h'\pmod{(x_0^p,\dots x_N^p)}.$$
Therefore
$$(\sum_{i=0}^Nx_i^2)^{e+1} h'\equiv (\sum_{i=0}^Nx_i^2)^{e} h\equiv 0
\pmod{(x_0^p,\dots x_N^p)}$$ and now we can again apply
Proposition \ref{diff-new}, since $e+1+\deg h'=e+\deg h\le
(N+1)\cdot \frac{p-1}{2}$. Continuing in this way till new $e$
becomes $p-1$, we get the required assertion.
\end{proof}

\medskip

Let us set $S=k[x_0, \dots , x_{N}]$, where $\char k=p$ and $N=n+1$.
Let $q=p^s$ for some non-negative integer $s$. Set
$I=(x_0^2+\dots+x_{N}^2,x_1^q,\dots ,x_{N}^q)$ and $A=S/I$. Let us
recall that
$$
\left(\frac{1-t^q}{1-t}\right) ^N=\sum _{i=0}^{\infty}\alpha_{i,N} t^i,$$
where $$\alpha_{i,N}=\sum _{j=0}^N(-1)^j
{{N}\choose {j}} {{i-jq+N-1}\choose {N-1}} ,$$
where we set ${a\choose b}=0$ if $a<b$ or $b<0$.
The Hilbert series of $A$ can be computed as
$$h_A(t)=\sum \dim A_i\, t^i=h_S(t) (1-t^2)(1-t^q)^N=(1+t)\left(
\frac{1-t^q}{1-t}\right) ^N.$$
Therefore
$$\dim A_i=\alpha_{i,N}+\alpha_{i-1,N}.$$ Note that $\dim_k A=2q^N$
(e.g., by the Bezout's theorem). The following lemma will be used in
the proof of Lemma \ref{sum-B}.

\begin{Lemma} \label{p^n}
For every integer $i$ we have
$$\sum _{j\in \ZZ}\dim A_{i+jq}=2q^n.$$
\end{Lemma}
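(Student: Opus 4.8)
The claim is that for every integer $i$, the sum $\sum_{j\in\ZZ}\dim A_{i+jq}=2q^n$, where $A=S/I$ with $S=k[x_0,\dots,x_N]$, $N=n+1$, and $I=(x_0^2+\dots+x_N^2,x_1^q,\dots,x_N^q)$. The plan is to exploit the explicit Hilbert series $h_A(t)=(1+t)\left(\frac{1-t^q}{1-t}\right)^N$ computed just above the lemma. Grouping the coefficients of $h_A(t)$ according to residue class modulo $q$ amounts to substituting $q$-th roots of unity: more precisely, if $\zeta$ runs over the $q$-th roots of unity in an algebraic closure, then $\sum_{j}\dim A_{i+jq}$ is $\frac{1}{q}\sum_{\zeta^q=1}\zeta^{-i}h_A(\zeta)$ (as a formal/analytic identity for the polynomial $h_A$).

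\textbf{Key steps.} First I would observe that $h_A(t)=(1+t)\left(\frac{1-t^q}{1-t}\right)^N$ and note that $\frac{1-t^q}{1-t}=1+t+\dots+t^{q-1}$ is the polynomial whose value at a $q$-th root of unity $\zeta$ is $q$ if $\zeta=1$ and $0$ otherwise. Hence $h_A(\zeta)=0$ for every $q$-th root of unity $\zeta\ne 1$, while $h_A(1)=2q^N$. Substituting into the root-of-unity filter gives $\sum_{j}\dim A_{i+jq}=\frac{1}{q}\cdot\zeta^{-i}h_A(\zeta)\big|_{\zeta=1}=\frac{1}{q}\cdot 2q^N=2q^{N-1}=2q^n$, using $N=n+1$. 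Since this is independent of $i$, the lemma follows. A purely combinatorial variant would substitute the formula $\dim A_i=\alpha_{i,N}+\alpha_{i-1,N}$ and the closed form for $\alpha_{i,N}$, but the generating-function argument is cleaner; alternatively one can argue directly that $\left(\frac{1-t^q}{1-t}\right)^N=\left(\sum_{k=0}^{q-1}t^k\right)^N$ expands as $\sum_i \alpha_{i,N}t^i$ with $\alpha_{i,N}$ counting the number of ways to write $i$ as an ordered sum of $N$ integers each in $\{0,\dots,q-1\}$, so $\sum_{j}\alpha_{i+jq,N}$ counts the number of $N$-tuples in $\{0,\dots,q-1\}^N$ with a prescribed sum modulo $q$, which is $q^{N-1}$ for each residue; then $\sum_j \dim A_{i+jq}=\sum_j\alpha_{i+jq,N}+\sum_j\alpha_{i-1+jq,N}=q^{N-1}+q^{N-1}=2q^{n}$.

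\textbf{Main obstacle.} There is no serious obstacle; the only care needed is to justify that the product of two polynomials (here $1+t$ and $\left(\frac{1-t^q}{1-t}\right)^N$) may be evaluated at roots of unity to extract the sums of coefficients along an arithmetic progression, which is the standard root-of-unity filter and is valid over $\QQ$ (the $\dim A_i$ are honest non-negative integers, so the characteristic of $k$ plays no role in this counting identity). One should also note that the sum $\sum_{j\in\ZZ}$ is finite since $A$ is finite-dimensional, so all manipulations are of polynomials, not of divergent series.
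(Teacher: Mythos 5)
Your proof is correct, but it proceeds differently from the paper's. The paper peels off one factor of $\frac{1-t^q}{1-t}=1+t+\dots+t^{q-1}$ from the Hilbert series, obtaining the recursion $\alpha_{i,N}=\alpha_{i,n}+\dots+\alpha_{i-q+1,n}$; summing this over a fixed residue class modulo $q$ makes each $\alpha_{j,n}$ appear exactly once, so $\sum_{j}\alpha_{i+jq,N}=\sum_j\alpha_{j,n}=q^n$, and the identity $\dim A_l=\alpha_{l,N}+\alpha_{l-1,N}$ then gives $2q^n$. You instead apply the root-of-unity filter directly to $h_A(t)=(1+t)\bigl(\frac{1-t^q}{1-t}\bigr)^N$, using that $h_A$ vanishes at every nontrivial $q$-th root of unity and equals $2q^N$ at $1$; your secondary combinatorial argument (tuples in $\{0,\dots,q-1\}^N$ with prescribed sum modulo $q$) is essentially the paper's argument in a different guise. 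All three are valid and equally elementary; the filter is perhaps the slickest, while the paper's recursion has the side benefit of being reused (the same device reappears in the proof of Lemma \ref{Bl-Cl} to derive the recursion for $\gamma_N$). Your one point requiring care — that the roots of unity live over $\QQ$ (or $\CC$), not over $k$, since $q=p^s$ and $k$ has characteristic $p$ — is correctly flagged in your final paragraph: the $\dim A_i$ are integers and the identity is purely combinatorial, so this causes no problem.
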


\begin{proof} By definition
$$(1+t+\dots +t^{q-1})\sum _{i=0}^{\infty}\alpha_{i,n}t^i=\sum _{i=0}^{\infty}\alpha _{i,N}t^i.$$
This gives $\alpha_{i,N}=\alpha_{i,n}+\dots +\alpha_{i-q+1,n}$.
Hence
$$\sum _{j\in \ZZ}\alpha_{i+jq,N}=\sum _{j\in \ZZ}\alpha _{j,n}=q^n.$$
Now the lemma follows from equality $\dim A_l=\alpha_{l,N}+\alpha_{l-1,N}.$
\end{proof}

\medskip

Let us set $d_{N,s}= \frac{1}{2}n(q-1)$ and
$$\gamma_N(i)=\frac{1}{2^N} \sum_{j\in \ZZ}(-1)^j\dim A_{d_{N,s}+i+jq}$$
for $i\in \ZZ$.  In principle, we could write the formulas for
$\gamma_N(i)$ using formulas for $\dim A_j$, but the obtained formulas
are rather useless and we need different formulas. Let us first define
inductively some sequences of numbers and functions.  Set $w_{0}=1$
and assume we have defined integers $w_0, ..., w_k$. Then we set
$$F_{k}(i)=\sum _{j=0}^{k}(-1)^j w_{k-j} {{i+j} \choose {2j+1}}$$
and
$$w_{k+1}=\sum _{j=0}^{k}(-1)^j w_{k-j} {{\frac{q+1}{2}+j} \choose
{2j+2}}.$$ Similarly, set $u_{0}=0$ and assume we have defined $u_0,
...,u_k$.  Then we set
$$G_{k}(i)=(-1)^k {{i+k} \choose {2k}}+ \sum _{j=0}^{k}(-1)^j u_{k-j}
{{i+j} \choose {2j+1}}$$
and
$$u_{k+1}=(-1)^k {{\frac{q+1}{2}+k}\choose {2k+1}}+ \sum
_{j=0}^{k}(-1)^j u_{k-j} {{\frac{q+1}{2}+j} \choose {2j+2}}.$$

\begin{Lemma} \label{Bl-Cl}
We have $\gamma_N(0)=0$, $\gamma_N(i)>0$ for $i=1,...,q-1$, and
$\gamma _N(q-i)=\gamma _N(i)$. Moreover, for $i=1,..., \frac{q-1}{2}$ we have
$$\gamma_{N}(i)=\left\{
\begin{array}{lc}
F_k(i)&\hbox{if $N=2k+2$,}\\
G_k(i)&\hbox{if $N=2k+1$.} \\
\end{array}
\right.$$
\end{Lemma}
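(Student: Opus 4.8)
The plan is to extract everything from the generating-function identity
$$h_A(t)=(1+t)\left(\frac{1-t^q}{1-t}\right)^N,$$
by isolating the residue classes mod $q$. Concretely, introduce the operator that, applied to a power series, keeps only the exponents congruent to a fixed $r$ mod $q$ and divides out $t^r$; equivalently work with the substitution $t\mapsto \zeta t$ for $q$-th roots of unity $\zeta$ and average. The quantity $\gamma_N(i)$ is, up to the factor $2^{-N}$, the alternating sum $\sum_j(-1)^j\dim A_{d_{N,s}+i+jq}$, which is the value at a primitive $2q$-th root of unity (or rather the coefficient extracted by the character $j\mapsto(-1)^j$ on $\ZZ/2q$) of the shifted series. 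Since $\big(\frac{1-t^q}{1-t}\big)=1+t+\dots+t^{q-1}$, raising to the $N$-th power and multiplying by $(1+t)$ gives an explicit polynomial whose coefficient extraction along arithmetic progressions is governed by binomial convolutions; the factor $2^{-N}$ should cancel against a factor $2^N$ coming from evaluating $(1+t+\dots+t^{q-1})$-type expressions at the relevant root of unity.

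First I would establish the symmetry $\gamma_N(q-i)=\gamma_N(i)$ and the vanishing $\gamma_N(0)=0$. The symmetry is immediate from the functional equation $h_A(t)=t^{\deg}h_A(1/t)$ (the socle degree of $A$ is $2d_{N,s}$-ish, and $d_{N,s}$ is chosen precisely so that $d_{N,s}+i$ and $d_{N,s}+(q-i)$ are swapped by $j\mapsto -j$ together with the reflection), so the alternating sum over $j$ is invariant under $i\mapsto q-i$. The vanishing at $i=0$ I would get from the relation between $\dim A_l$ and $\alpha_{l,N}$ together with $\sum_{j}(-1)^j\alpha_{d+jq,N}$: pairing the $j$ and $-j-1$ or $j$ and something shifted terms, the $(1+t)$ factor forces a telescoping cancellation at the central index. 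Positivity of $\gamma_N(i)$ for $1\le i\le q-1$ I expect to follow once the closed forms $F_k,G_k$ are in hand, since those are manifestly built from binomial coefficients $\binom{i+j}{2j+1}$ which are nonnegative, with the inductively defined weights $w_k,u_k$ shown to have the right signs; alternatively positivity has a direct representation-theoretic or combinatorial meaning (it counts something like a dimension of an eigenspace), but proving it by induction on $N$ alongside the formula is cleanest.

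The heart of the matter is the closed-form evaluation, proved by induction on $N$ (equivalently on $k$). Passing from $N$ to $N+1$ multiplies $h_A$ by $\frac{1-t^q}{1-t}=1+t+\dots+t^{q-1}$ and shifts $d_{N,s}$ by $\frac{q-1}{2}$; on the level of the alternating-progression-sum this becomes a recursion expressing $\gamma_{N+1}$ in terms of a partial sum of $\gamma_N$-values, i.e. $\gamma_{N+1}(i)=\sum_{\ell}\varepsilon_\ell\,\gamma_N(\text{shift of }i)$ where the $\varepsilon_\ell\in\{0,\pm1\}$ come from how the uniform weights of $1+t+\dots+t^{q-1}$ distribute over residue classes and pick up the sign $(-1)^j$. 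I would then check that $F_k,G_k$ satisfy exactly this recursion: the identity $\binom{i+j}{2j+1}$ summed against the shift pattern collapses, using Pascal-type identities such as $\sum_{a}\binom{a}{2j+1}$ telescoping to $\binom{\cdot}{2j+2}$, to produce $F_{k+1}$ or $G_{k+1}$ with the prescribed new coefficient $w_{k+1}$ resp.\ $u_{k+1}$ — which is precisely why $w_{k+1},u_{k+1}$ were defined by those formulas (they are the "boundary term" of the collapse, evaluated at $i=\frac{q+1}{2}$, which is the reflection point of the range). The main obstacle is bookkeeping the odd/even parity of $N$: the two factors of $2$ (one from the extra $(1+t)$ that is present only once, one from pairing residue classes $r$ and $q-1-r$) interact differently depending on whether $N$ is even or odd, which is exactly the source of the dichotomy $F_k$ vs.\ $G_k$ — note $G_k$ has the extra leading term $(-1)^k\binom{i+k}{2k}$ that $F_k$ lacks. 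Getting the indexing of these root-of-unity/parity arguments exactly right, and verifying the base cases $N=1,2$ by hand against the definitions $w_0=1,u_0=0$, is the delicate part; the rest is a routine binomial-identity verification (citing \cite{HT} for any determinant-type identity if one prefers the linear-algebra route used earlier in the section).
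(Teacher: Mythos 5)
Your proposal is correct and follows essentially the same route as the paper: both derive the recursion for $\gamma_N$ from multiplying the Hilbert series by $\frac{1-t^q}{1-t}$ (the paper gets $2\gamma_N(i)=\sum_{|j|\le\frac{q-1}{2}}\gamma_{N-1}(i+j)$, which simplifies to the partial sum $\gamma_N(i)=\sum_{j=1}^{i}\gamma_{N-1}(\frac{q+1}{2}-j)$), verify that $F_k,G_k$ satisfy it via the hockey-stick identity $\sum_{j=k}^{n}\binom{j}{k}=\binom{n+1}{k+1}$ with $w_{k+1},u_{k+1}$ appearing exactly as the boundary terms at $\frac{q+1}{2}$ you describe, and then induct on $N$ from the base case. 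Your only wobble is the suggestion that positivity is ``manifest'' from the closed forms (it is not, because of the alternating signs $(-1)^j$), but your fallback of proving positivity by induction alongside the formula, via the partial-sum recursion with all summands positive, is precisely what the paper does.
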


\begin{proof}
Using the same method as in the proof of Lemma
\ref{p^n} we get the following recurssion:
$$2\gamma_N (i)=\sum _{|j|\le \frac{q-1}{2}}\gamma _{n}(i+j).$$
Then one can see that
$$\gamma _1(i)=\left\{ \begin{array}{cl}
0&\hbox{if $i=0$,}\\
1&\hbox{if $i=1,\dots , q-1$,}\\
\end{array}\right.$$
$\gamma _N(0)=0$, $\gamma _N(q-i)=\gamma _N(i)$ for $i=1,..., q-1$ and
$$\gamma _N(i)=\sum _{j=1}^i\gamma _n\left( \frac{q+1}{2} -j\right)$$
for $i=1,..., \frac{q-1}{2}.$ Using this recurrsion and the formula
$$\sum _{j=k}^n{j\choose k}={{n+1}\choose {k+1}},$$ one can easily
prove the lemma by induction.
\end{proof}

\medskip

Set $B=A/(x_0^q)$ and $C=A/(I:x_0^q)$. Note that $A$ and $C$ are
graded $0$-dimensional Gorenstein rings (in fact, $A$ is a complete
intersection ring).
From the definition of $A$ one can easily see that $\dim A_i= 0$ if
and only if $i<0$ or $i>N(q-1)+1$. As $C_i\subset A_{i+q}$, this implies that $C_i=0$ if $i<0$
or $i>N(q-1)+1-q=n(q-1).$

From now on we consider the case $s=1$, i.e., $q=p$.

\begin{Proposition} \label{C=B}
If $p>2$ then $C_d=B_{d}$ for $d\le d_N=\frac{1}{2}n(p-1).$
\end{Proposition}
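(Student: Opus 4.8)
We want to show that the two graded quotients of $A = S/(x_0^2+\dots+x_N^2,\, x_1^p,\dots,x_N^p)$ agree in low degrees, namely $C_d = B_d$ for $d \le d_N = \tfrac12 n(p-1)$, where $B = A/(x_0^p)$ and $C = A/(I:x_0^p)$. Since $B = S/(I, x_0^p)$ and $C = S/(I:x_0^p)$, and since $x_0^p \in (I:x_0^p)$ always (as $x_0^p \cdot x_0^p = x_0^{2p}$ and modulo $x_0^2+\dots+x_N^2$ one has $x_0^2 \equiv -(x_1^2+\dots+x_N^2)$, so $x_0^{2p} \equiv \pm(x_1^2+\dots+x_N^2)^p \in (x_1^p,\dots,x_N^p) \subset I$), there is always a surjection $B \twoheadrightarrow C$. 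So the content is the reverse inequality on dimensions: $\dim_k C_d \ge \dim_k B_d$ for $d \le d_N$, equivalently that $(I:x_0^p)_d = (I, x_0^p)_d$ in these degrees.

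The plan is to identify $(I:x_0^p)$ concretely and then invoke Proposition \ref{diff}. Working modulo $x_0^p$ is the same as working modulo $x_0^p$ and using the relation $x_0^2 = -(x_1^2+\dots+x_N^2) =: -y'$ inside $A' := S/(x_1^p,\dots,x_N^p)$ — actually it is cleanest to pass to the polynomial ring and write $I = (x_0^2+\dots+x_N^2, x_1^p,\dots,x_N^p)$. A polynomial $h$ lies in $(I:x_0^p)$ iff $x_0^p h \in I$, i.e. $x_0^p h \equiv 0$ modulo $(x_0^2+\dots+x_N^2, x_1^p,\dots,x_N^p)$. Eliminating the quadric, $x_0^p h \equiv (\pm) x_0 (x_1^2+\dots+x_N^2)^{(p-1)/2} h$, so the condition becomes $x_0 (x_1^2+\dots+x_N^2)^{(p-1)/2} h \in (x_1^p,\dots,x_N^p, x_0^2+\dots+x_N^2)$; reducing the single remaining $x_0$ against the quadric is impossible, so in fact $h \in (I:x_0^p)$ forces $(x_1^2+\dots+x_N^2)^{(p-1)/2} h$ to lie in $(x_1^p,\dots,x_N^p)$ after the appropriate reduction — this is exactly a colon-ideal statement in the $N$ variables $x_1,\dots,x_N$, to which Proposition \ref{diff} applies with $e = (p-1)/2$ and the ambient variable count one smaller. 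The degree bound $d \le d_N = \tfrac12 n(p-1)$ is precisely what makes $d + e \le \tfrac12 N(p-1) + \text{(lower order)}$ fall inside the range where Proposition \ref{diff} is valid; this is where one must be careful tracking the shift by $x_0$ and the one fewer variable.

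Concretely: Proposition \ref{diff} (applied in $k[x_1,\dots,x_N]$, so with $N$ replaced by $N-1 = n$ and $e = (p-1)/2$) gives, for $d' \le (n+1)\cdot\tfrac{p-1}{2} - \tfrac{p-1}{2} = n\cdot\tfrac{p-1}{2}$, the identity $\big((x_1^p,\dots,x_N^p):y^{(p-1)/2}\big)_{d'} = (x_1^p,\dots,x_N^p, y^{(p+1)/2})_{d'}$ where $y = x_1^2+\dots+x_N^2$. Feeding this back: if $h$ (of degree $d \le d_N$) satisfies $x_0^p h \in I$, then after the elimination above the $x_1,\dots,x_N$-part of $h$ must lie in $(x_1^p,\dots,x_N^p, y^{(p+1)/2})$, and $y^{(p+1)/2} \equiv \pm x_0^{p+1} \equiv 0$ modulo $(x_0^p, x_0^2+\dots+x_N^2)$. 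Hence $h \in (I, x_0^p)$, which is the desired containment $(I:x_0^p)_d \subseteq (I,x_0^p)_d$; combined with the trivial reverse containment this proves $C_d = B_d$.

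The main obstacle is the bookkeeping in the elimination step — correctly reducing $x_0^p h$ against the quadric relation to isolate a statement purely in $x_1,\dots,x_N$, and checking that the degree shift introduced by the factors of $x_0$ (one factor of $x_0$ survives, contributing to the degree count) keeps us inside the hypothesis $d' \le n\cdot\tfrac{p-1}{2}$ of Proposition \ref{diff}. One must also handle the odd/even-in-$x_0$ decomposition of $h$ if the reduction does not cleanly commute with it, but this is parallel to the even/odd split already carried out in the proof of Proposition \ref{diff-new}. Everything else is formal manipulation of monomial and colon ideals.
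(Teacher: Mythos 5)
Your argument is correct, and it reaches Proposition \ref{diff} by a genuinely different reduction than the paper's. The paper starts from the syzygy $g_0x_0^p=\sum_{i\ge 1}g_ix_i^p+(\sum x_i^2)h$, observes that the coefficient $h$ of the quadric lies in $\bigl((x_0^p,\dots,x_N^p):\sum x_i^2\bigr)$, and applies Proposition \ref{diff} with $e=1$ in all $N+1$ variables; the degree count $\deg h=d+p-2\le (N+1)\frac{p-1}{2}-1$ is exactly tight, and a final comparison of coefficients in $x_0$ finishes the proof. You instead eliminate $x_0$ first: writing $h\equiv h^{(0)}+x_0h^{(1)} \pmod{x_0^2+y}$ with $h^{(0)},h^{(1)}\in k[x_1,\dots,x_N]$ and $x_0^p\equiv \pm x_0y^{(p-1)/2}$, the condition $x_0^ph\in I$ splits (since $S/(x_0^2+y)\cong k[x_1,\dots,x_N]\oplus x_0k[x_1,\dots,x_N]$) into $y^{(p-1)/2}h^{(0)}\in(x_1^p,\dots,x_N^p)$ and $y^{(p+1)/2}h^{(1)}\in(x_1^p,\dots,x_N^p)$, so you invoke Proposition \ref{diff} in $N$ variables with $e=\frac{p-1}{2}$ and $e=\frac{p+1}{2}$ respectively. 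The one detail you flagged but left implicit does work out: for the odd part the exponent is $\frac{p+1}{2}$, not $\frac{p-1}{2}$, the hypothesis becomes $\deg h^{(1)}\le N\frac{p-1}{2}-\frac{p+1}{2}=d_N-1$, which holds since $\deg h^{(1)}=d-1$, and the output $h^{(1)}\in(x_1^p,\dots,x_N^p,y^{(p-1)/2})$ suffices because $x_0y^{(p-1)/2}\equiv\pm x_0^p$. What each route buys: the paper's choice $e=1$ keeps the even/odd gymnastics confined to the proof of Proposition \ref{diff-new} and makes the degree bookkeeping a single line, while your elimination makes transparent why the critical degree is $d_N=n\frac{p-1}{2}$ (it is the bound for $n+1$ variables with $e=\frac{p-1}{2}$) and connects directly to the module structure of $A$ over $k[x_1,\dots,x_N]/(x_1^p,\dots,x_N^p)$.
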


\begin{proof}
Since $x_0^{2p}=(-\sum_{i\ge 1}x_i^2)^p$ in $S$, we see that
$I+(x_0^p)\subset I+(I:x_0^p)$. Our assertion is equivalent to the fact
that this inclusion is  equality in gradings $d \le d_N$.
Therefore it is sufficient to show that if $g_0\in (I:x_0^p)_d$, $d\le d_N$
then $g_0 \in I+x_0^p$.

Assume that $g_0\in (I:x_0^p)_d$ for some $d\le d_N$. Then there exist
some homogeneous polynomials $g_i$, $i=1,\dots, N$ and $h$, such that
$$g_0x_0^p=\sum _{i=1}^Ng_ix_i^p+(\sum _{i=0}^N x_i^2) h.$$
Then by Proposition \ref{diff} we can write $h$ as
$$h=\sum _{i=0}^N x_i^p h_i+(\sum _{i=0}^N x_i^2)^{p-1}h'$$
for some homogeneous polynomials $h_i$, $i=0,...,N$ and $h'$.
Then
$$g_0x_0^p=\sum _{i=1}^N(g_i+(\sum _{i=0}^N x_i^2)h_i)x_i^p+(\sum
_{i=0}^N x_i^2)h_0x_0^p + (\sum _{i=0}^N x_i^{2p})h'$$ and hence
$$(g_0-x_0^ph_0-(\sum _{i=0}^N x_i^2)h')x_0^p=\sum _{i=1}^N(g_i+(\sum
_{i=0}^N x_i^2)h_i+x_i^ph')x_i^p.$$ Now comparing coefficients of both
sides treated as polynomials in $x_0$ we see that $g_0-x_0^ph_0-(\sum
_{i=0}^N x_i^2)h'\in (x_1^p, \dots , x_N^p)$, which finishes the
proof.
\end{proof}

\medskip

The above proposition allows us to compute the Hilbert functions of
$B$ and $C$:
\begin{Lemma} \label{Hilb-B}
$$\dim {B_{i}}=\left\{
\begin{array}{lc}
\sum_{j\ge 0}(-1)^j\dim A_{i-jp} &\hbox{if $i\le d_N+p$,}\\
\dim B_{2d_N-i}&\hbox{if $i\ge d_N+p$.}\\
 \end{array}\right.$$
In particular, $B_i=C_i$ if $i\le d_N$ or $i\ge d_N+p$. We also have
$\dim C_{2d_N-i}=\dim C_i$ for all integers $i$.  Moreover, $B_i\ne 0$
($C_i\ne 0$) if and only if $0\le i\le 2d_N=n(p-1)$.
\end{Lemma}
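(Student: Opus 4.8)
The plan is to extract everything from Proposition \ref{C=B}, which identifies $C_d=B_d$ in degrees $d\le d_N$, together with the Gorenstein symmetry of $C$. First I would establish the formula for $\dim B_i$ when $i\le d_N+p$. Since $B=A/(x_0^p)$ and multiplication by $x_0^p$ on $A$ is a graded map, the Koszul-type exact sequence
\[
0\to (A/(I:x_0^p))(-p)\xrightarrow{\,\cdot x_0^p\,} A\to B\to 0
\]
gives $\dim B_i=\dim A_i-\dim C_{i-p}$. Now iterate: $\dim C_{i-p}=\dim B_{i-p}$ as long as $i-p\le d_N$ (Proposition \ref{C=B}), so $\dim B_i=\dim A_i-\dim A_{i-p}+\dim C_{i-2p}$, and continuing we get $\dim B_i=\sum_{j\ge 0}(-1)^j\dim A_{i-jp}$ in the range where all the intermediate indices stay $\le d_N$, i.e.\ for $i\le d_N+p$. (For small $i$ the alternating sum is automatically finite since $\dim A_l=0$ for $l<0$.)

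Next I would pin down the symmetry $\dim C_{2d_N-i}=\dim C_i$. Since $A$ is a graded Artinian complete intersection, it is Gorenstein with socle degree $N(p-1)+1$, and $x_0^p$ is a nonzerodivisor? — no, it is not, but $C=A/(I:x_0^p)$ is the image of multiplication by $x_0^p$, hence $C\cong x_0^pA\subset A$ as graded modules up to the shift by $p$; being a nonzero principal ideal generated by a single element in a Gorenstein Artinian ring, $x_0^pA$ is itself Gorenstein, with socle degree $N(p-1)+1-p=n(p-1)=2d_N$ (the socle of $A$ survives in $x_0^pA$ precisely because $x_0^p\cdot(\text{socle generator})\neq0$ up to the structure — more carefully, $C$ is Gorenstein of socle degree $2d_N$ because it is the canonical module of $A/(0:x_0^p)$ suitably shifted, as already asserted in the text right before the Proposition). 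Granting that $C$ is $0$-dimensional Gorenstein with top degree $2d_N$, Poincaré duality for the socle pairing yields $\dim C_{2d_N-i}=\dim C_i$ for all $i$.

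Then $\dim B_{2d_N-i}=\dim C_{2d_N-i}=\dim C_i=\dim B_i$ for $2d_N-i\le d_N$, i.e.\ for $i\ge d_N$, which (combined with the first formula) covers $i\ge d_N+p$ and in particular shows $B_i=C_i$ there. The equality $B_i=C_i$ for $i\le d_N$ is exactly Proposition \ref{C=B}. Finally, for the support statement: $C_i\neq0$ iff $0\le i\le 2d_N$ follows from the Gorenstein symmetry together with $C_i=0$ for $i<0$ and $C_i\subset A_{i+p}$ forcing $C_i=0$ for $i>n(p-1)=2d_N$ (already noted in the text), plus the fact that a Gorenstein Artinian graded ring has $C_i\neq0$ in the whole range between $0$ and its socle degree (the socle pairing $C_i\times C_{2d_N-i}\to C_{2d_N}$ is perfect, so neither factor can vanish); and $B_i\neq0$ in the same range because $B_i=C_i$ outside the middle band $d_N<i<d_N+p$ while inside that band $\dim B_i=\sum_{j\ge0}(-1)^j\dim A_{i-jp}$, which one checks is positive — e.g.\ because $\dim B_i\ge\dim B_{d_N}=\dim C_{d_N}>0$ by unimodality of the truncated alternating sums, or simply because $B$ surjects onto $C$ in every degree so $\dim B_i\ge\dim C_i>0$.

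The main obstacle I anticipate is not any single hard computation but rather being careful about \emph{why} $C$ is Gorenstein with the stated socle degree and \emph{why} the alternating-sum formula for $\dim B_i$ is valid exactly up to $i=d_N+p$ and not beyond — the inductive unwinding of $\dim B_i=\dim A_i-\dim C_{i-p}$ only produces the clean alternating sum while every intermediate index is in the range controlled by Proposition \ref{C=B}. Once the Gorenstein property of $C$ (equivalently, of the principal ideal $x_0^pA$ in the Gorenstein Artinian ring $A$) is granted — which the text has essentially already asserted — the rest is bookkeeping with the exact sequence and the socle symmetry.
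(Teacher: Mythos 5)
Your setup — the exact sequence $0\to C(-p)\to A\to B\to 0$ given by multiplication by $x_0^p$, the iteration of $\dim B_i=\dim A_i-\dim C_{i-p}$ via Proposition \ref{C=B} to obtain the alternating sum for $i\le d_N+p$, and the Gorenstein symmetry of $C$ about degree $d_N$ — is essentially the paper's argument, and your identification of the socle degree of $C$ via the minimal nonzero ideal of the Artinian Gorenstein ring $A$ is a legitimate (arguably cleaner) substitute for the paper's monotonicity argument. However, there is a genuine gap where you pass to the range $i\ge d_N+p$. The chain $\dim B_{2d_N-i}=\dim C_{2d_N-i}=\dim C_i=\dim B_i$ ``for $i\ge d_N$'' invokes $\dim C_i=\dim B_i$ in that range, which is precisely one of the assertions to be proved — and it is \emph{false} for $d_N<i<d_N+p$: by the computation in the proof of Lemma \ref{sum-B}, $\dim B_{d_N+l_0}-\dim C_{d_N+l_0}=\sum_{j\in\ZZ}(-1)^j\dim A_{d_N+l_0-jp}=2^N\gamma_N(l_0)>0$ for $0<l_0<p$. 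Nothing established earlier in your argument yields $B_i=C_i$ for $i\ge d_N+p$; the surjection $B\to C$ only gives the inequality $\dim B_i\ge\dim C_i$.

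The missing ingredient is the Poincar\'e duality of $A$ itself: $A$ is a graded Artinian complete intersection with socle degree $N(p-1)+1=2d_N+p$, so $\dim A_l=\dim A_{2d_N+p-l}$. With this, for $i\ge d_N$ one gets
$$\dim B_{i+p}=\dim A_{i+p}-\dim C_i=\dim A_{2d_N-i}-\dim B_{2d_N-i}=\dim C_{2d_N-i-p}=\dim B_{2d_N-(i+p)},$$
where the second equality uses $\dim C_i=\dim C_{2d_N-i}=\dim B_{2d_N-i}$ (valid since $2d_N-i\le d_N$), the third uses the exact sequence in degree $2d_N-i$, and the last uses Proposition \ref{C=B} again (valid since $2d_N-i-p\le d_N$). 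This gives $\dim B_i=\dim B_{2d_N-i}$ for $i\ge d_N+p$, and then $\dim B_i=\dim B_{2d_N-i}=\dim C_{2d_N-i}=\dim C_i$ together with the surjection $B_i\to C_i$ gives $B_i=C_i$ there. Once this is in place, your treatment of the support statement goes through.
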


\begin{proof}
Since we have short exact sequences
$$0\to C_{i}\mathop{\to}^{x_0^p} A_{i+p}\to B_{i+p}\to 0,$$
Proposition \ref{C=B} implies that
$$\dim {B_{i+p}}-\dim {A_{i+p}}= -\dim C_i= -\dim B_i $$ for $i\le
d_N$. This allows us to compute $\dim B_i$ for $i\le d_N+p$ and $\dim
C_i$ for $i\le d_N$.  An easy calculation implies that $\dim C_i$ is
increasing for $i\le d_N$. Let us recall that $C$ is Gorenstein and
hence the Hilbert function of $C$ is symmetric.  Hence the above
remark together with $C_{2d_N+1}=0$ imply that $\dim C_i=\dim
C_{2d_N-i}$.

Assume that $i\ge d_N$. Then using once more the above short exact
sequence we get
$$\dim {B_{i+p}}-\dim {A_{i+p}}= -\dim C_i= -\dim B_{2d_N-i}.$$
Finally, we can use the duality of the graded Gorenstein ring $A$ to
get
$$\dim B_{i+p}= \dim A_{i+p}-\dim B_{2d_N-i}=\dim A_{2d_N-i}-\dim
B_{2d_N-i}=\dim B_{2d_N-i-p}.$$
The remaining part of the lemma follows from Proposition \ref{C=B} and
equality of dimensions $\dim B_i=\dim C_i$ for $i\ge d_N+p$.
\end{proof}

\medskip

\begin{Lemma}\label{sum-B}
Let $l$ be an integer. Then
$$\sum _{i\in \ZZ} \dim B_{l+ip}=p^n+2^n\gamma_{N}(l_0),$$
where $l_0$ is the unique integer such that
$0\le l_0<p$ and $l\equiv d_N+l_0 \pmod{p}$,
and  $\gamma _N (\cdot )$ is as in Lemma  \ref{Bl-Cl}.
\end{Lemma}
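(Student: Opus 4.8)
The plan is to relate the quantity $\sum_{i\in\ZZ}\dim B_{l+ip}$ to the already-computed quantities $\sum_{i\in\ZZ}\dim A_{l+ip}$ (from Lemma \ref{p^n}, which gives $2q^n=2p^n$, though here I only need the $s=1$ case) and $\gamma_N(\cdot)$ (from Lemma \ref{Bl-Cl}), by exploiting the short exact sequence $0\to C_i\mathop{\to}^{x_0^p} A_{i+p}\to B_{i+p}\to 0$ together with the identification $B_i=C_i$ for $i\le d_N$ or $i\ge d_N+p$ (Proposition \ref{C=B} and Lemma \ref{Hilb-B}). First I would fix a residue class $l_0$ with $0\le l_0<p$ and $l\equiv d_N+l_0\pmod p$, and observe that the full congruence class $\{l+ip:i\in\ZZ\}$ is exactly $\{d_N+l_0+jp:j\in\ZZ\}$; so it suffices to compute $\Sigma_{l_0}:=\sum_{j\in\ZZ}\dim B_{d_N+l_0+jp}$.

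The main computational idea is to subtract two telescoping-type sums. Summing the exact sequence over the appropriate arithmetic progression gives, formally, $\sum_j \dim A_{d_N+l_0+jp} = \sum_j \dim C_{d_N+l_0+(j-1)p} + \sum_j \dim B_{d_N+l_0+jp}$, so that $\sum_j\dim B_{d_N+l_0+jp} = \sum_j\dim A_{d_N+l_0+jp} - \sum_j\dim C_{d_N+l_0+jp}$. Since all these modules are finite-dimensional and vanish outside a bounded range, the reindexing is legitimate. By Lemma \ref{p^n} the first sum equals $p^n$ (the $s=1$ instance: $\sum_{j\in\ZZ}\dim A_{i+jp}=2p^n$ — wait, one must be careful here). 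The point is that the sum over \emph{one} residue class mod $p$ of $\dim A$ need not be $p^n$ in general; rather, Lemma \ref{p^n} says $\sum_{j\in\ZZ}\dim A_{i+jq}=2q^n$ for \emph{every} $i$, so in our $q=p$ situation $\sum_j \dim A_{d_N+l_0+jp}=2p^n$. Thus $\Sigma_{l_0}=2p^n-\sum_j\dim C_{d_N+l_0+jp}$.

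So the crux is to show $\sum_{j\in\ZZ}\dim C_{d_N+l_0+jp} = p^n - 2^n\gamma_N(l_0)$. Here I would split the sum $\sum_j \dim C_{d_N+l_0+jp}$ into the part coming from indices $\le d_N$ (equivalently $\ge d_N$, by the symmetry $\dim C_i=\dim C_{2d_N-i}$ from Lemma \ref{Hilb-B}) versus the ``middle'' index, and compare with the analogous decomposition of $\sum_j\dim B_{d_N+l_0+jp}$ using $B_i=C_i$ outside the band $(d_N,d_N+p)$. Concretely, $\dim B$ and $\dim C$ differ only on the single index in the class that lies strictly between $d_N$ and $d_N+p$ — namely $d_N+l_0$ if $l_0>0$, and there is no such index if $l_0=0$ — so $\Sigma_{l_0}-\sum_j\dim C_{d_N+l_0+jp}$ equals $\dim B_{d_N+l_0}-\dim C_{d_N+l_0}$ (or $0$ when $l_0=0$). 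Then I would invoke the formula $\dim B_{d_N+l_0}=\sum_{j\ge0}(-1)^j\dim A_{d_N+l_0-jp}$ from Lemma \ref{Hilb-B}, and compare it against the alternating-sum definition $\gamma_N(l_0)=\frac1{2^N}\sum_{j\in\ZZ}(-1)^j\dim A_{d_N+l_0+jp}$; the relation $\dim C_{d_N+l_0}=\dim C_{d_N-l_0}=\dim B_{d_N-l_0}$ handles the other half of the alternating sum, and assembling the two halves recovers $2^n\gamma_N(l_0)$ up to the bookkeeping of the overall factor $2^N=2\cdot 2^n$ and the $(-1)^j$ signs. The main obstacle will be getting this index bookkeeping exactly right — matching the one-sided alternating sum for $\dim B$ against the two-sided one in the definition of $\gamma_N$, tracking the sign and the constant $2^n$ versus $2^N$, and correctly treating the boundary case $l_0=0$ (where $\gamma_N(0)=0$ and indeed $\Sigma_0=p^n$, consistent with $C_i=B_i$ throughout that class). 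Once the arithmetic-progression sum of $\dim C$ is pinned down, the stated formula $\Sigma_{l_0}=p^n+2^n\gamma_N(l_0)$ follows immediately.
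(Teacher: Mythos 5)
Your proposal is correct and follows essentially the same route as the paper: sum the exact sequence $0\to C_i\to A_{i+p}\to B_{i+p}\to 0$ over the arithmetic progression, use Lemma \ref{p^n} to evaluate the $A$-sum as $2p^n$, observe that $\dim B$ and $\dim C$ agree on the progression except possibly at the single index $l_1=d_N+l_0$, and identify $\dim B_{l_1}-\dim C_{l_1}$ with the two-sided alternating sum $\sum_{j\in\ZZ}(-1)^j\dim A_{l_1-jp}=2^N\gamma_N(l_0)$ via the one-sided formula for $\dim B$ and the Gorenstein symmetry of $C$. The index and sign bookkeeping you flag as the remaining obstacle works out exactly as you describe, including the degenerate case $l_0=0$.
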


\begin{proof}  Let us set $l_1=d_N+l_0$.
By Lemma \ref{Hilb-B}
we have
\begin{eqnarray*}
&\sum _{i\in \ZZ } \dim A_{l+i p}= \sum _{i\in \ZZ} (\dim B_{l+ip}+
\dim C_{l+ip-p})= 2 \sum _{i\in  \ZZ} \dim B_{l+ip}
-(\dim B_{l_1}-\dim C_{l_1}).\\
\end{eqnarray*}
But
\begin{eqnarray*}
&\dim B_{l_1}-\dim C_{l_1}=\sum_{j\ge 0}(-1)^j\dim
A_{l_1-jp}+\sum_{j\ge 0}(-1)^{j+1}\dim A_{2d_N-l_1-jp}\\ &=\sum_{j\ge
0}(-1)^j\dim A_{l_1-jp}+\sum_{j\le -1}(-1)^{j}\dim A_{l_1-jp}=
\sum_{j\in \ZZ}(-1)^j\dim A_{l_1-jp}.\\
\end{eqnarray*}
Therefore by Lemma \ref{p^n}
$$\sum _{i\in \ZZ} \dim B_{l+ip}=p^n+\frac{1}{2}\sum_{j\in \ZZ}(-1)^j\dim A_{l_1-jp},$$
which together with Lemma \ref{Bl-Cl} proves the required equality.
\end{proof}

\section{Vanishing and non-vanishing theorems}

In this section we prove some basic vanishing and non-vanishing
theorems for cohomology of twisted Frobenius pull-backs of spinor
bundles.

Let us set $\psi_1= \Omega_{\PP^N}(1)|_{Q_n}$.

\begin{Proposition} \label{psixspin}
For any spinor bundle $\Sigma$ on $Q_n$ we have
$$h^1(Q_n, \psi_1\otimes \Sigma (t))=\left\{
\begin{array}{cl}
0\quad&\hbox{if $t\ne 0$,}\\
2\rk \Sigma=2^{{[\frac{N}{2}]}}\quad&\hbox{if $t= 0$.}\\
\end{array}
\right.$$
\end{Proposition}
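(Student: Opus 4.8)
The plan is to use the Euler sequence $0\to\psi_1\to\O_{\PP^N}(1)^{N+1}|_{Q_n}\to T_{\PP^N}(-1)\otimextstyle\cdots$ — more precisely, to restrict the dual Euler sequence $0\to\Omega_{\PP^N}(1)\to\O_{\PP^N}^{N+1}\to\O_{\PP^N}(1)\to 0$ to $Q_n$, getting the exact sequence $0\to\psi_1\to\O_{Q_n}^{N+1}\to\O_{Q_n}(1)\to 0$, and tensor it with $\Sigma(t)$. Since $\Sigma$ is ACM by Theorem~\ref{coh-spin}, we have $H^i(Q_n,\Sigma(t'))=0$ for $0<i<n$ and all $t'$, and (again by Theorem~\ref{coh-spin}) $H^0(Q_n,\Sigma(t'))$ vanishes for $t'\le 0$ and has dimension $2^{[\frac N2]}\binom{t'+n-1}{n}$ for $t'\ge 1$; moreover $n\ge 3$ so $1$ lies strictly between $0$ and $n$. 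Hence the long exact cohomology sequence of
$$0\to\psi_1\otimes\Sigma(t)\to\Sigma(t)^{N+1}\to\Sigma(t+1)\to 0$$
immediately gives, for $n\ge 4$ (so that $H^1(Q_n,\Sigma(t)^{N+1})=0=H^1(Q_n,\Sigma(t+1))$),
$$h^1(Q_n,\psi_1\otimes\Sigma(t))=\dim\coker\bigl(H^0(Q_n,\Sigma(t))^{N+1}\to H^0(Q_n,\Sigma(t+1))\bigr),$$
and $H^0$ of $\psi_1\otimes\Sigma(t)$ is just the kernel of this map.

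First I would dispose of the range $t\le -1$: there $H^0(Q_n,\Sigma(t+1))=0$ by Theorem~\ref{coh-spin}, so the cokernel is $0$ and $h^1=0$. For $t\ge 0$ the map in question is the multiplication/evaluation map $H^0(Q_n,\Sigma(t))\otimes H^0(Q_n,\O_{Q_n}(1))'\to\cdots$; wait — more honestly, the map $H^0(\Sigma(t))^{N+1}\to H^0(\Sigma(t+1))$ is induced by the $N+1$ linear forms $x_0,\dots,x_N$, i.e.\ it is the natural map $H^0(\O_{Q_n}(1))\otimes H^0(\Sigma(t))\to H^0(\Sigma(t+1))$ composed with the inclusion $H^0(\O_{Q_n}(1))\hookrightarrow H^0(\O_{\PP^N}(1))=k^{N+1}$ (which is an equality for $n\ge 2$). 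So I need to compute the cokernel of the module-multiplication map $S_1\otimes M_t\to M_{t+1}$, where $M=\bigoplus_t H^0(Q_n,\Sigma(t))$ is the graded maximal Cohen–Macaulay $S/(q)$-module attached to $\Sigma$ by Theorem~\ref{coh-spin}, $S=k[x_0,\dots,x_N]$, $q=\sum x_i^2$. The key point is that $M$ is generated in a single degree: by the matrix factorization description (sequences~(\ref{(i')}), (\ref{(ii')}), (\ref{(iii')})) the module $M$ (up to twist) is the cokernel of a matrix of linear forms $\varphi$ (or $\varphi'$, $\psi'$), so it is generated in degree $1$ with $2^{[\frac N2]}$ generators and linear first syzygies. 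Thus for $t\ge 1$ the multiplication map $S_1\otimes M_t\to M_{t+1}$ is surjective, forcing $h^1(\psi_1\otimes\Sigma(t))=0$ for $t\ge 1$.

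It remains to treat $t=0$. Here $H^0(Q_n,\Sigma)=0$, so the map $H^0(\Sigma)^{N+1}\to H^0(\Sigma(1))$ is zero, hence $h^0(\psi_1\otimes\Sigma)=0$ and $h^1(\psi_1\otimes\Sigma)=h^0(Q_n,\Sigma(1))=2^{[\frac N2]}\binom{n}{n}=2^{[\frac N2]}=2\rk\Sigma$, using $\rk\Sigma=2^{[\frac N2]-1}$ (for $n$ odd, $N=n+1$ even, $\rk\Sigma=2^{(N-2)/2}=2^{N/2-1}$; for $n$ even, $\rk\Sigma_{\pm}=2^{(n-2)/2}=2^{(N-3)/2}$, and $[\frac N2]=\frac{N-1}{2}$, so $2^{[\frac N2]}=2\cdot 2^{(N-3)/2}=2\rk\Sigma$). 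Finally I must handle $n=3$ separately, since then $H^1(Q_3,\Sigma(t)^{N+1})$ and $H^1(Q_3,\Sigma(t+1))$ are middle cohomology and vanish by the ACM property anyway, so the same argument goes through verbatim — the ACM hypothesis is exactly what makes $n=3$ pose no extra difficulty.

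The main obstacle is the surjectivity of the multiplication map $S_1\otimes M_t\to M_{t+1}$ for $t\ge 1$, i.e.\ the fact that $M$ has no generators beyond degree $1$; I expect to justify this cleanly from the explicit presentation $\O_{\PP^N}^{r}(-1)\xrightarrow{\varphi}\O_{\PP^N}^{r}\to i_*\Sigma(1)\to 0$ in~(\ref{(i')})--(\ref{(iii')}), which exhibits $\bigoplus_t H^0(\Sigma(t+1))$ as the cokernel of a matrix of linear forms over $S/(q)$ and hence as a module generated in degree $0$ with linear relations — so $\Sigma(t)$ is globally generated with vanishing $H^1(\psi_1\otimes\Sigma(t))$ for all $t\ge 1$ by the standard fact that a linearly-presented module has surjective multiplication by $S_1$ in all positive degrees. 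Alternatively one can invoke Theorem~\ref{coh-spin}'s stated $h^0$ formula together with a dimension count via Lemma~\ref{spin-duality}-type Serre duality to pin down the one nonvanishing value at $t=0$, but the presentation argument is more direct.
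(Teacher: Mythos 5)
Your argument is correct, but it takes a genuinely different route from the paper. The paper tensors the two-term linear resolution $0\to \O_{\PP^N}^{2^{[N/2]}}(-1)\to \O_{\PP^N}^{2^{[N/2]}}\to i_*\Sigma(1)\to 0$ of (\ref{(i')})--(\ref{(iii')}) with $\Omega^1_{\PP^N}(t)$ and reads off everything from the Bott formulas on $\PP^N$: the single nonvanishing $H^1(\PP^N,\Omega^1_{\PP^N})=k$ immediately produces the value $2^{[N/2]}$ at $t=0$ as the rank of the middle term, and all other twists vanish at once. You instead stay on $Q_n$, tensor the restricted Euler sequence with $\Sigma(t)$, and reduce via the ACM property to the cokernel of the multiplication map $H^0(\Sigma(t))\otimes S_1\to H^0(\Sigma(t+1))$; the cases $t\le 0$ then follow from the $h^0$ formula of Theorem \ref{coh-spin}, and the case $t\ge 1$ requires the extra input that the section module of $\Sigma(1)$ is generated in degree $0$ --- which you correctly extract from the same matrix factorization presentation, since taking sections of all twists of (\ref{(i')})--(\ref{(iii')}) exhibits $\bigoplus_t H^0(\Sigma(t+1))$ as $\coker(S(-1)^r\to S^r)$. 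The two proofs are in effect dual uses of the same linear presentation; yours is longer but makes transparent why the answer at $t=0$ is exactly $h^0(\Sigma(1))$, while the paper's is a two-line Bott computation. Two minor points: your opening sentence is garbled before you restate the correct sequence, and your case split at $n\ge 4$ versus $n=3$ is unnecessary, since $H^1$ is intermediate cohomology (so killed by ACM) for every $n\ge 2$.
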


\begin{proof}
Every spinor bundle $\Sigma$ is ACM and so it fits into the following
short exact sequence of sheaves on $\PP^N$
\begin{equation}
0\to \O_{\PP^N}^{2^{[N/2]}}(-1)\to \O_{\PP^N}^{2^{[N/2]}}\to
i_*\Sigma(1) \to 0,\label{(*)}
\end{equation}
where $i: Q_n\hookrightarrow \PP^N$ is the embedding (see (\ref{(i')}), (\ref{(ii')}) and (\ref{(iii')})).
Tensoring this sequence with $\Omega^1_{\PP^N}(t)$ and using standard Bott formulas
for cohomology of twists of $\Omega^1_{\PP^N}$ on $\PP^N$ we get the
result.
\end{proof}

\medskip

\begin{Corollary}
If $E$ is arithmetically Cohen-Macaulay on $Q_n$ then it is a direct
sum of line bundles if and only if
$$\sum _{t\in \ZZ} h^1(E\otimes \psi_1(t))=\rk E.$$
\end{Corollary}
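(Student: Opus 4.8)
If $E$ is an ACM bundle on $Q_n$, then $E$ is a direct sum of line bundles if and only if $\sum_{t\in\ZZ} h^1(E\otimes\psi_1(t))=\rk E$.

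The plan is to reduce everything to a count of the multiplicities of spinor summands, using Theorem~\ref{ACM-Q} as the starting point. First I would write $E\simeq \bigoplus \O_{Q_n}(a_i)\oplus\bigoplus\Sigma_j(b_j)$ by Theorem~\ref{ACM-Q}. Since cohomology is additive over direct sums, it suffices to compute $\sum_t h^1(E\otimes\psi_1(t))$ on each summand. For a line bundle summand $\O_{Q_n}(a_i)$, the relevant quantity is $\sum_t h^1(\psi_1(t))$; I would evaluate this directly from the defining sequence $0\to\O_{\PP^N}(-1)\to\O_{\PP^N}^{N+1}\to i_*\psi_1(1)\to 0$ (the restricted Euler/cotangent sequence twist) together with the Bott formulas already invoked in the proof of Proposition~\ref{psixspin}. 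The key arithmetic fact I expect to need is that $\sum_t h^1(Q_n,\psi_1(t))=1$: the only intermediate cohomology of $\psi_1=\Omega_{\PP^N}(1)|_{Q_n}$ in degree $1$ sits in a single twist (this is the $t=0$ term in the spirit of Proposition~\ref{psixspin}, but now with the trivial bundle in place of $\Sigma$), contributing exactly $1$. For a spinor summand $\Sigma_j(b_j)$, Proposition~\ref{psixspin} gives $\sum_t h^1(\psi_1\otimes\Sigma_j(t))=2\rk\Sigma_j$, and this is strictly greater than $\rk\bigl(\Sigma_j(b_j)\bigr)=\rk\Sigma_j$.

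Assembling these: if $E=\bigoplus\O_{Q_n}(a_i)$ with $r$ summands (so $\rk E=r$), then $\sum_t h^1(E\otimes\psi_1(t))=r\cdot 1=r=\rk E$, proving the ``only if'' direction. Conversely, for general ACM $E$ with $\ell$ line-bundle summands and spinor summands $\Sigma_1(b_1),\dots,\Sigma_k(b_k)$, additivity gives
$$\sum_t h^1(E\otimes\psi_1(t))=\ell+\sum_{j=1}^k 2\rk\Sigma_j=\rk E+\sum_{j=1}^k\rk\Sigma_j\ge \rk E,$$
with equality if and only if there are no spinor summands, i.e.\ $E$ is a direct sum of line bundles. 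This proves the ``if'' direction.

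The main obstacle is the bookkeeping fact $\sum_t h^1(Q_n,\psi_1(t))=1$; everything else is formal additivity plus Proposition~\ref{psixspin}. I would establish it exactly as Proposition~\ref{psixspin} is proved: restrict the Euler sequence on $\PP^N$ to $Q_n$, twist, and read off cohomology from Bott's formulas, being careful that $H^1(Q_n,\Omega^1_{\PP^N}(t)|_{Q_n})$ receives contributions only from $H^1(\PP^N,\Omega^1_{\PP^N}(t))$ (nonzero exactly for $t=0$, dimension $1$) and from $H^2(\PP^N,\Omega^1_{\PP^N}(t-2))$ (which vanishes for $N=n+1\ge 4$ in all twists), so the sum telescopes to a single $1$. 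For small $n$ one checks the at-most-one ambiguous twist by hand. Once this is in place, the corollary follows immediately.
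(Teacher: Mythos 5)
Your proof is correct and follows essentially the same route as the paper: decompose $E$ via Theorem \ref{ACM-Q}, use Proposition \ref{psixspin} to see that each twisted spinor summand contributes $2\rk\Sigma$ to the sum, and use the Bott-type computation $\sum_{t}h^1(Q_n,\psi_1(t))=1$ (the single nonvanishing twist, which in the paper's normalization is $t=-1$) so that line bundle summands contribute exactly their rank. The only difference is that you spell out the computation of $\sum_t h^1(\psi_1(t))$ in more detail than the paper does.
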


\begin{proof}
By Theorem \ref{ACM-Q} any ACM bundle on $Q_n$ is isomorphic to a
direct sum of line bundles $\O_{Q_n}(i)$ and spinor bundles twisted by
some $\O _{Q_n}(i)$. By Proposition \ref{psixspin} we see that
$$\sum _{t\in \ZZ} h^1(\Sigma (i) \otimes \psi_1(t))=2 \rk \Sigma.$$
On the other hand
$$h^1(Q_n, \psi_1 (t))=\left\{
\begin{array}{cl}
0\quad&\hbox{if $t\ne -1$,}\\
1\quad&\hbox{if $t=-1$,}\\
\end{array}
\right. $$ so if $\sum _{t\in \ZZ} h^1(E\otimes \psi_1(t))=\rk E$
then $E$ cannot contain any twists of spinor bundles as direct summands.
\end{proof}

\medskip

Let us recall that $d_N=(N-1)\frac{p-1}{2}=n\frac{p-1}{2}$.

\begin{Corollary} \label{dir-sum-lb}
$F_*(\O_{Q_n}(t))$ is a direct sum of line bundles if and only if
$t-d_{N}$ is divisible by $p$.
\end{Corollary}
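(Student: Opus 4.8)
The plan is to apply the criterion of the preceding corollary. First, $F_*(\O_{Q_n}(t))$ is ACM: by the projection formula and the finiteness of $F$, for $0<i<n$ one has $H^i(Q_n,F_*(\O_{Q_n}(t))\otimes\O_{Q_n}(l))\simeq H^i(Q_n,\O_{Q_n}(t+pl))=0$. Moreover $F_*(\O_{Q_n}(t))$ is locally free of rank $p^n$. So it is a direct sum of line bundles if and only if $\sum_{l\in\ZZ}h^1(Q_n,F_*(\O_{Q_n}(t))\otimes\psi_1(l))=p^n$, and the whole task is to evaluate this sum.

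By the projection formula again, $F_*(\O_{Q_n}(t))\otimes\psi_1(l)\simeq F_*\bigl(F^*\psi_1(t+pl)\bigr)$, and since $F$ is affine this yields $h^1(Q_n,F_*(\O_{Q_n}(t))\otimes\psi_1(l))=h^1(Q_n,F^*\psi_1(t+pl))$. Hence the sum above equals $\sum_{m\equiv t\,(p)}h^1(Q_n,F^*\psi_1(m))$. I would compute each term from the Frobenius pull-back of the (restricted and twisted) Euler sequence on $Q_n$,
$$0\to F^*\psi_1(m)\to\O_{Q_n}(m)^{N+1}\to\O_{Q_n}(m+p)\to 0,$$
whose last nonzero map is $(x_0^p,\dots,x_N^p)$. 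Passing to cohomology and using $H^1(\O_{Q_n}(m))=0$ together with the projective normality of $Q_n$ (so that $H^0(\O_{Q_n}(m))=R_m$, where $R=S/(\sum_{i=0}^Nx_i^2)$ is the homogeneous coordinate ring of $Q_n$), one identifies $H^1(Q_n,F^*\psi_1(m))$ with the cokernel of the multiplication map $R_m^{N+1}\to R_{m+p}$, i.e. with $\bigl(R/(x_0^p,\dots,x_N^p)\bigr)_{m+p}=\bigl(S/(\sum_{i=0}^Nx_i^2,x_0^p,\dots,x_N^p)\bigr)_{m+p}$. For $q=p$ the latter ring is exactly $B$, so $h^1(Q_n,F^*\psi_1(m))=\dim B_{m+p}$.

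Combining these, $\sum_{l\in\ZZ}h^1(Q_n,F_*(\O_{Q_n}(t))\otimes\psi_1(l))=\sum_{j\equiv t\,(p)}\dim B_j$, which by Lemma \ref{sum-B} equals $p^n+2^n\gamma_N(l_0)$, where $l_0$ is the unique integer with $0\le l_0<p$ and $t\equiv d_N+l_0\pmod p$. By Lemma \ref{Bl-Cl}, $\gamma_N(l_0)=0$ exactly when $l_0=0$ --- i.e. when $p\mid t-d_N$ --- and $\gamma_N(l_0)>0$ otherwise. Therefore the sum equals $p^n$ if and only if $p\mid t-d_N$, and the preceding corollary gives the assertion. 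The only genuinely new input is the sheaf-theoretic identity $h^1(Q_n,F^*\psi_1(m))=\dim B_{m+p}$; all of the quantitative content is already contained in Lemma \ref{sum-B} (which rests on Propositions \ref{diff-new}, \ref{diff} and \ref{C=B}), so beyond the bookkeeping with the Euler sequence I do not anticipate a serious obstacle.
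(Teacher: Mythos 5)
Your proof is correct and follows essentially the same route as the paper: reduce to the criterion $\sum_{l}h^1(E\otimes\psi_1(l))=\rk E$ from the preceding corollary, use the projection formula to rewrite the sum as $\sum_{m\equiv t\,(p)}h^1(Q_n,F^*\psi_1(m))$, and evaluate it via Lemma \ref{sum-B} and the positivity statement of Lemma \ref{Bl-Cl}. The only difference is that you derive the key identification $h^1(Q_n,F^*\psi_1(m))=\dim B_{m+p}$ directly from the Frobenius pull-back of the restricted Euler sequence, whereas the paper cites \cite[Theorem 4]{BM} for exactly this fact; your derivation is correct and makes the argument self-contained.
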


\begin{proof}
By \cite[Theorem 4]{BM} the graded $S$-module $\bigoplus _{t\in \ZZ}
H^1(Q_n, F^*\psi_1(t-p))$ is isomorphic as a graded $S$-module to
$B$. But $H^1(Q_n, F^*\psi_1(t+ip))=H^1(Q_n, F_*(\O_{Q_n}(t))\otimes
\psi_1(i)),$ so by the above corollary the fact that
$F_*(\O_{Q_n}(t))$ is a direct sum of line bundles is equivalent to
equality
$$\sum _{i\in \ZZ} \dim B_{t+ip}=p^n.$$
Now the required assertion follows from Lemma \ref{sum-B}.
\end{proof}

The following vanishing theorem allows to compute the decomposition of
Frobenius push-forwards of line bundles:

\begin{Theorem} \label{vanishing}
Let $\Sigma$ be a spinor bundle on $Q_n$. Then for $0<i<n$ we have
$H^i(Q_n, F^*\Sigma (t)) =0$ if $t\le d_N-ip$ or $t\ge d_N-(i-1)p$.
\end{Theorem}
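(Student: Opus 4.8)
The plan is to reduce the cohomology vanishing on $Q_n$ to the purely algebraic statements about ideals established in Section 2, and to run an induction on $n$ by restricting to hyperplane sections.

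First I would use the matrix-factorization presentation of spinor bundles. By sequences (\ref{(i')}), (\ref{(ii')}), (\ref{(iii')}), a spinor bundle $\Sigma$ fits into $0\to\O_{\PP^N}^{2^{[N/2]}}(-1)\to\O_{\PP^N}^{2^{[N/2]}}\to i_*\Sigma(1)\to 0$ on $\PP^N$; pulling back by $F$ and twisting, one reads the cohomology $H^i(Q_n, F^*\Sigma(t))$ off the cokernel of an explicit map of free $\O_{\PP^N}$-modules whose entries are the $p$-th powers $x_0^p,\dots,x_N^p$ (the Frobenius pull-back of the linear matrix factorization has entries $x_k^p$). Concretely, for $1\le i<n$ the intermediate cohomology $H^i$ of a twist of $F^*\Sigma$ is computed by the Koszul-type complex attached to the regular sequence $x_0^p,\dots,x_N^p$ on $S$ modulo the quadric, i.e. it is expressible through the graded pieces of the algebra $A=S/(x_0^2+\dots+x_N^2, x_1^p,\dots,x_N^p)$ and the colon ideals $((x_0^p,\dots,x_N^p):(\sum x_i^2)^e)$ that appear in Propositions \ref{diff-new} and \ref{diff}. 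The upshot I would aim for: $H^i(Q_n, F^*\Sigma(t))$ is (a subquotient related to) the degree-$(t+\text{shift})$ part of such a colon ideal modulo $(x_0^p,\dots,x_N^p,(\sum x_i^2)^{p-e})$, where the exponent $e$ is determined by $i$, and the vanishing range $t\le d_N-ip$ or $t\ge d_N-(i-1)p$ is exactly the range where Proposition \ref{diff} forces that graded piece to vanish (the bound $d\le (N+1)\frac{p-1}{2}-e$ in Proposition \ref{diff} matches $d_N=n\frac{p-1}{2}$ after accounting for the twist).

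Next I would set up the induction on $n$. Restricting to a smooth hyperplane section $Q_{n-1}\subset Q_n$ and using the exact sequence $0\to F^*\Sigma(t-1)\to F^*\Sigma(t)\to (F^*\Sigma(t))|_{Q_{n-1}}\to 0$ together with the fact (Theorem \ref{ACM-Q}, and the compatibility of spinor bundles under hyperplane restriction — a spinor bundle on $Q_n$ restricts to a spinor bundle, resp. a sum of the two spinor bundles, on $Q_{n-1}$) lets me propagate vanishing from dimension $n-1$ to dimension $n$ for the range $0<i<n-1$. The boundary cases $i=1$ and $i=n-1$ need to be handled directly: for $i=n-1$ I would invoke Serre duality and the duality isomorphisms of Lemma \ref{spin-duality}, which convert $H^{n-1}$ back to $H^1$ of another spinor bundle with the reflected twist, so it suffices to treat $i=1$; and for $i=1$ the statement is precisely the assertion that $H^1(Q_n, F^*\Sigma(t))$ — which by (\ref{(i)})--(\ref{(iii)}) and Theorem \ref{coh-spin} is a quotient of $H^0$ of a twist of another spinor bundle, i.e. of a graded piece of the coordinate ring of the quadric cone — vanishes outside the predicted window, and this is what the Hilbert-function computation (Lemmas \ref{Hilb-B}, \ref{sum-B}, combined with Proposition \ref{C=B}) pins down. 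The base case $n=3$ (where only $i=1,2$ occur) is then immediate from the $i=1$ case and Serre duality.

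The main obstacle I expect is the bookkeeping in the first step: translating "intermediate cohomology of $F^*\Sigma(t)$" into the exact graded module over $S$ that appears in Section 2, keeping the twist shifts (the $d_N$, the $-ip$ versus $-(i-1)p$, and the $\Sigma(1)$ normalization in the matrix-factorization sequences) consistent, and verifying that the relevant piece is genuinely governed by the colon ideal $((x_0^p,\dots,x_N^p):(\sum x_i^2)^e)$ with the correct $e=e(i)$ rather than something larger. Once that dictionary is set up correctly, Proposition \ref{diff} does all the real work and the induction is routine. A secondary subtlety is checking that hyperplane restriction does not lose the needed vanishing at the endpoints of the induction, which is why the $i=1$ case must be proved by hand rather than inherited; but that case is exactly the content of the Hilbert-series analysis already carried out, so no new difficulty arises there.
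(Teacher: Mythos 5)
Your overall architecture (reduce to one side of the window by the duality of Lemma \ref{spin-duality}, induct on $n$ via hyperplane restriction, feed in the algebra of Section 2) matches the paper's, but the step that carries all the weight is missing, and the substitute you propose for it does not work. The colon ideals of Propositions \ref{diff-new} and \ref{diff} do \emph{not} compute $H^i(Q_n,F^*\Sigma(t))$: they enter only through Proposition \ref{C=B} and Lemma \ref{sum-B}, which concern the algebra $B\simeq\bigoplus_t H^1(Q_n,F^*\psi_1(t-p))$ with $\psi_1=\Omega_{\PP^N}(1)|_{Q_n}$ (via [BM]), i.e.\ the restricted cotangent bundle, not a spinor bundle. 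Pulling back the matrix factorization by Frobenius gives a factorization of $f^p$, not of $f$, so the cokernel is not an MCM module over the quadric cone and its intermediate cohomology is not "a graded piece of $((x_0^p,\dots,x_N^p):(\sum x_i^2)^e)$" in any form you have justified. The actual bridge in the paper is different and indirect: Lemma \ref{sum-B} yields Corollary \ref{dir-sum-lb} (that $F_*\O_{Q_n}(d_N)$ is a direct sum of line bundles), and then the projection formula together with the ACM property of $\Sigma$ (Theorem \ref{coh-spin}) gives the single anchor vanishing $H^i(Q_n,F^*\Sigma(d_N+tp))=H^i(Q_n,F_*\O_{Q_n}(d_N)\otimes\Sigma(t))=0$, in particular at $t=d_N-(i-1)p$.

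Without that anchor your induction also fails to close. Vanishing of $H^i$ on the hyperplane section $Q_{n-1}$ only produces surjections $H^i(Q_n,F^*\Sigma(t))\twoheadrightarrow H^i(Q_n,F^*\Sigma(t+1))$ in the relevant range; these propagate vanishing rightward from a point where it is already known, they do not create it. Likewise your $i=1$ argument is not sound as stated: exhibiting $H^1(F^*\Sigma(t))$ as a quotient (in fact a cokernel of an explicit map from $H^0(\O(t))^{\oplus 2^{[N/2]}}$ to $H^0(F^*\Sigma(t+p))$) proves nothing about its vanishing, and Lemmas \ref{Hilb-B} and \ref{sum-B} compute dimensions of $B$, not $h^0$ or $h^1$ of twists of $F^*\Sigma$. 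So the "routine bookkeeping" you defer is precisely the missing content; the correct resolution is the anchor-plus-propagation mechanism above, not a direct identification of spinor-bundle cohomology with the colon ideals.
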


\begin{proof}
By Lemma \ref{spin-duality} it is sufficient to prove vanishing of
$H^i(Q_n, F^*\Sigma (t))$ for $t\ge d_N-(i-1)p$. By Corollary
\ref{dir-sum-lb} and the projection formula we have
$$H^i(Q_n, F^*\Sigma (d_N+tp))=H^i(Q_n, F_*(\O_{Q_n}(d_N))\otimes \Sigma
(t))=0$$ for any integer $t$. In particular, $H^i(Q_n, F^*\Sigma
(d_N-(i-1)p))=0$.  Now the proof is by induction on $n$.

For $n=2$, $\Sigma=\O_{\PP^1 \times \PP^1}(-1,0)$ or
$\Sigma=\O_{\PP^1\times \PP^1}(0,-1)$ and in both cases it is easy
to check the required assertion. Assume that the theorem holds for
quadrics of dimension less than $n$.
Let us recall that the restriction of the spin representation of
$\Spin (2m+1)$ to $\Spin (2m)$ is the sum of the two spin
representations of $\Spin (2m)$.  Similarly, the restriction of either
spin representation of $\Spin (2m)$ to $\Spin (2m-1)$ is the spin
representation. Therefore the restriction of a spinor bundle to a
hypersurface quadric is either a spinor bundle or a direct sum of two
spinor bundles.  Using the long cohomology sequence for the short
exact sequence
$$0\to F^*\Sigma (t)\to F^*\Sigma (t+1)\to F^*\Sigma (t+1)|_{Q_{n-1}}\to 0$$
and the induction assumption we see that for $t\ge d_n-(i-1)p-1$ we have
a surjection
$$H^i(Q_n, F^*\Sigma (t))\to H^i(Q_n, F^*\Sigma (t+1))\to 0.$$
In particular, vanishing of $H^i(Q_n, F^*\Sigma (d_N-(i-1)p))$ implies vanishing
of $H^i(Q_n, F^*\Sigma (t))$ for $t\ge d_N-(i-1)p$.
\end{proof}

The above vanishing theorem implies that the Frobenius pull-back of
the spinor bundle on $Q_3$ is very similar to an instanton
bundle. More precisely, we have the following proposition:

\begin{Proposition}
Let us set $E=F^*\Sigma(\frac{p-1}{2})$ on $Q_3$. Then $E$ is the
cohomology of the monad
$$ 0\to \O (-1)^b\to \Sigma ^{b+1}\to \O ^b\to 0,$$
where $b=h^1(E)=-\chi (E).$
\end{Proposition}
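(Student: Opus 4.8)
The plan is to identify $E=F^*\Sigma(\tfrac{p-1}{2})$ on $Q_3$ as the cohomology of a monad of the indicated shape by exhibiting the three terms one at a time, using the vanishing theorems proved above. First I would record the numerics: on $Q_3$ we have $N=4$ and $d_N=3\tfrac{p-1}{2}$, so the twist $\tfrac{p-1}{2}=d_N-p\cdot? $ — more usefully, $E=F^*\Sigma(t)$ with $t=\tfrac{p-1}{2}$, and one computes $d_N-2p\le t$ while $t\le d_N-p$, so Theorem \ref{vanishing} (with $i=1$ and $i=2$, $n=3$, so $0<i<n$ means $i\in\{1,2\}$) gives $H^1(Q_3,F^*\Sigma(t))$ possibly nonzero but $H^2(Q_3,F^*\Sigma(t'))=0$ for the relevant range, and more importantly it pins down where $H^1$ and $H^2$ of $E$ and its twists by $\O(\pm 1)$ vanish. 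Concretely I want: $H^2(E)=0$, $H^1(E(-1))=0$, and $H^2(E(-1))=0$ or the Serre-dual statement $H^1(E(\ast))=0$; together with $H^0(E)=0$ (which follows from $H^0(F^*\Sigma(t))=H^0(\O_{Q_3}(pt)\otimes\Sigma)$ type reasoning, or from strong stability of $\Sigma$ as in Theorem \ref{coh-spin}, since $pt<p$ and $\Sigma$ has no sections in nonpositive twists) and the Serre-dual $H^3(E)=0$.

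Next I would set $b=h^1(E)$ and verify $b=-\chi(E)$: this is just the statement that $h^i(E)=0$ for $i\ne 1$, which is exactly the collection of vanishings above. Then the standard instanton-monad construction goes through. For the display (Beilinson-type or direct): from $h^1(E)=b$ and $h^1(E(-1))=0$ one builds the map $\O(-1)^b\to \Sigma^{b+1}$ or rather uses that $\ext^1(\O,E)$ has dimension $b$ to get an extension, and the self-duality $E^*\simeq E$ up to twist (coming from $\Sigma^*\simeq\Sigma(1)$ on $Q_3$, i.e.\ $n=2m+1$ with $m=1$, so $\Sigma^*\simeq\Sigma(1)$, hence $E^*=F^*\Sigma^*(-\tfrac{p-1}{2})=F^*\Sigma(p-\tfrac{p-1}{2})=F^*\Sigma(\tfrac{p+1}{2})=E(1)$) makes the monad self-dual, forcing the shape $\O(-1)^b\to\Sigma^{b+1}\to\O^b$. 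I would check that the composite $\O(-1)^b\to\O^b$ is zero (automatic from the grading/twist, since $\hom(\O(-1),\O)$ maps compatibly) and that the maps have maximal rank at every point — injectivity of $\O(-1)^b\to\Sigma^{b+1}$ as a bundle map and surjectivity of $\Sigma^{b+1}\to\O^b$ — by a dimension count at a point together with stability of $\Sigma$, which prevents $\Sigma$ from having $\O(-1)$ or $\O$ as a sub/quotient in the wrong way.

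The remaining arithmetic is to confirm that the cohomology of such a monad is exactly $E$: the monad $M^\bullet:\ \O(-1)^b\to\Sigma^{b+1}\to\O^b$ has $\H^{-1}=0$, $\H^0=E'$, $\H^1=0$, and one computes $\chi(E')$, $h^0(E'(-1))=h^3(E'(1))=0$, etc., matching $E$; then one invokes Beilinson-type uniqueness (or simply that an ACM bundle on $Q_3$ with the prescribed intermediate cohomology and Chern classes is unique, using Theorem \ref{ACM-Q}: $E'$ is a direct sum of line bundles and twisted spinor bundles, and the cohomology constraints force $E'\simeq E$).

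I expect the main obstacle to be the pointwise maximal-rank (monad nondegeneracy) step: showing the bundle map $\O(-1)^b\hookrightarrow\Sigma^{b+1}$ is nowhere-degenerate and dually that $\Sigma^{b+1}\twoheadrightarrow\O^b$ is surjective. This needs more than cohomology vanishing — it needs either restriction to lines/hyperplane quadrics (as in the proofs of Proposition \ref{quasi-aff} and Theorem \ref{vanishing}) combined with stability of $\Sigma$ to rule out jumping loci, or an explicit use of the matrix-factorization presentation (\ref{(i')}) of $\Sigma$ on $\PP^4$ pulled back by Frobenius. Everything else — the vanishing of $H^0,H^2,H^3$ of the relevant twists, the identification $b=-\chi(E)$, and the final uniqueness — is a direct consequence of Theorem \ref{vanishing}, Theorem \ref{coh-spin}, Serre duality, and Theorem \ref{ACM-Q}, so I would state those quickly and spend the bulk of the argument on nondegeneracy.
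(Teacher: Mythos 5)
The paper itself only says that the proof is an application of Horrocks' killing technique, similar to \cite[Proposition 1.1]{OS}, and leaves the details to the reader, so any complete argument here goes beyond the text. Your sketch is in the right spirit but has one genuine gap: the final identification step. You propose to build an abstract monad $\O(-1)^b\to\Sigma^{b+1}\to\O^b$, check nondegeneracy pointwise, and then identify its cohomology $E'$ with $E$ ``using Theorem \ref{ACM-Q}: $E'$ is a direct sum of line bundles and twisted spinor bundles, and the cohomology constraints force $E'\simeq E$.'' But $E=F^*\Sigma(\frac{p-1}{2})$ is \emph{not} ACM: by Theorem \ref{vanishing} and Corollary \ref{non-vanishing} one has $H^1(E(j))\ne 0$ exactly for $0\le j\le p-2$ (indeed $b=h^1(E)>0$ is the whole point of the statement), and the same is true of the cohomology of any such monad. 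So Theorem \ref{ACM-Q} does not apply, matching Chern classes and a few cohomology groups does not pin the bundle down, and this uniqueness step fails as stated. (A smaller slip: your inequality $t\le d_N-p$ is false for $t=\frac{p-1}{2}$; what is true, and what you need, is $t-1\le d_N-p$ and $t\ge d_N-p$, giving $H^1(E(-1))=0$ and $H^2(E)=0$.)

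The fix is to run Horrocks' construction \emph{forward from} $E$, as in \cite{OS}: using $\ext^1(\O^b,E)\simeq H^1(E)^b$ form the universal extension $0\to E\to Q\to\O^b\to 0$ (and, dually via $E^*\simeq E(1)$ and $\Sigma^*\simeq\Sigma(1)$, the corresponding extension by $\O(-1)^b$), then use the vanishing results (Theorem \ref{vanishing}, Lemma \ref{spin-duality}, Theorem \ref{coh-spin}) to show the resulting rank-$(2b+2)$ bundle has no intermediate cohomology in any twist; only then does Theorem \ref{ACM-Q} apply, splitting it into line bundles and twists of $\Sigma$, and an $h^0$/rank count eliminates the line-bundle summands and yields $\Sigma^{b+1}$. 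Built this way, the monad maps are by construction a subbundle inclusion and a bundle surjection, so the pointwise maximal-rank issue you flag as the main obstacle does not arise at all; the real work is the intermediate-cohomology computation for $Q$, which your sketch does not carry out.
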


\begin{proof}
The proof is an application of Horrock's killing technique and it is
quite similar to the proof of \cite[Proposition 1.1]{OS}.
We leave the details to the reader.
\end{proof}

\begin{Corollary} \label{non-vanishing}
Let $\Sigma$ be a spinor bundle on $Q_n$. Then for $0<i<n$ we have
$H^i(Q_n, F^*\Sigma (t)) \ne 0$ if $d_N-ip <t< d_N-(i-1)p$.
\end{Corollary}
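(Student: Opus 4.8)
The plan is to prove Corollary \ref{non-vanishing} directly, by induction on $n$, using the restriction–to–a–hyperplane–section sequences that already appear in the proof of Theorem \ref{vanishing}, but now to propagate non-vanishing upward in dimension rather than vanishing. (A naive Euler characteristic argument is not enough here, since in the relevant range Theorem \ref{vanishing} controls only $H^{i'}$ for $0<i'<n$ and says nothing about $H^0$ or $H^n$.) The base case $n=2$ is explicit: $Q_2\cong\PP^1\times\PP^1$ embedded by $\O(1,1)$, the two spinor bundles are $\O(-1,0)$ and $\O(0,-1)$, $d_N=p-1$, $i=1$, and $F^*\Sigma(t)\cong\O(t-p,t)$ (or with the factors swapped), so a one-line K\"unneth computation gives $h^1=(p-t-1)(t+1)>0$ precisely for $0<t<p-1$, as required.

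For the inductive step fix $0<i<n$ and a spinor bundle $\Sigma$ on $Q_n$, and recall from the proof of Theorem \ref{vanishing} that the restriction $\Sigma|_{Q_{n-1}}$ is a spinor bundle (if $n$ is even) or a direct sum of the two spinor bundles (if $n$ is odd), that $(F^*\Sigma(t))|_{Q_{n-1}}\cong F^*(\Sigma|_{Q_{n-1}})(t)$, and that the analogous constant for $Q_{n-1}\subset\PP^{N-1}$ equals $d_N-\tfrac{p-1}{2}$. Writing $t=d_N-ip+r$, I would feed the exact sequences
$$0\to F^*\Sigma(t-1)\to F^*\Sigma(t)\to F^*(\Sigma|_{Q_{n-1}})(t)\to 0$$
$$0\to F^*\Sigma(t)\to F^*\Sigma(t+1)\to F^*(\Sigma|_{Q_{n-1}})(t+1)\to 0$$
into cohomology. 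In the first sequence $t-1=d_N-ip+(r-1)\ge d_N-ip$ once $r\ge1$, so Theorem \ref{vanishing} forces $H^{i+1}(Q_n,F^*\Sigma(t-1))=0$ as soon as $i+1<n$; hence $H^i(Q_n,F^*\Sigma(t))$ surjects onto $H^i(Q_{n-1},F^*(\Sigma|_{Q_{n-1}})(t))$, and for $1\le r\le\tfrac{p-1}{2}$ one checks that $t$ falls inside the interval $\bigl(d_N-\tfrac{p-1}{2}-ip,\ d_N-\tfrac{p-1}{2}-(i-1)p\bigr)$, so the target is nonzero by induction (summand by summand). Symmetrically, in the second sequence $t+1\le d_N-(i-1)p$ once $r\le p-1$, so Theorem \ref{vanishing} forces $H^{i-1}(Q_n,F^*\Sigma(t+1))=0$ as soon as $i-1>0$; hence the connecting map $H^{i-1}(Q_{n-1},F^*(\Sigma|_{Q_{n-1}})(t+1))\hookrightarrow H^i(Q_n,F^*\Sigma(t))$ is injective, and for $\tfrac{p+1}{2}\le r\le p-1$ one checks that $t+1$ lies in the interval where this $Q_{n-1}$-group is nonzero by induction. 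Together these cover the whole range $1\le r\le p-1$ for all $2\le i\le n-2$.

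The remaining levels $i=1$ and $i=n-1$ are the only subtle ones, because there the argument above would ask for vanishing of $H^0$ or $H^n$ on $Q_n$, which Theorem \ref{vanishing} does not supply. For $i=1$ the lower-half ($r\le\tfrac{p-1}{2}$) argument still goes through (it only needs $H^2(Q_n,\cdot)=0$, valid since $n\ge3$), and the upper half then follows from Lemma \ref{spin-duality} with $s=1$: it identifies $H^1(Q_n,F^*\Sigma(t))$ with the dual of $H^1(Q_n,F^*\tilde\Sigma(t'))$ for another spinor bundle $\tilde\Sigma$, and the substitution sends the parameter $r$ to $p-r$, exchanging the two halves of the interval. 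Finally $i=n-1$ reduces to $i=1$ by Serre duality: $H^{n-1}(Q_n,F^*\Sigma(t))\cong H^1(Q_n,F^*\Sigma'(p-n-t))^*$ for a suitable spinor bundle $\Sigma'$ (using $\Sigma^*\cong\Sigma(1)$, resp.\ $\Sigma_\mp(1)$, together with $\omega_{Q_n}\cong\O_{Q_n}(-n)$ and $(F^*\Sigma)^*\cong F^*(\Sigma^*)$), and one checks that $d_N-(n-1)p<t<d_N-(n-2)p$ corresponds under $t\mapsto p-n-t$ exactly to the level-$1$ range.

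I expect the only real obstacle to be the index bookkeeping created by the shift $d_N=\bigl(d_N-\tfrac{p-1}{2}\bigr)+\tfrac{p-1}{2}$ between the constant for $Q_n$ and that for $Q_{n-1}$: it is the reason the interval of length $p$ on $Q_n$ cannot be handled by a single restriction sequence but must be split into a lower half (which matches level $i$ on $Q_{n-1}$) and an upper half (which matches level $i-1$), and it is also what makes the extreme levels $i=1,n-1$ — where Theorem \ref{vanishing} says nothing about $H^0$ or $H^n$ — require the detour through Lemma \ref{spin-duality} and Serre duality. Once the correct half-intervals and duality substitutions are pinned down, all the remaining verifications are elementary inequalities.
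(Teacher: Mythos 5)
Your proposal is correct, but it takes a genuinely different route from the paper. The paper argues indirectly: if $H^i(Q_n,F^*\Sigma(t))$ vanished for a single $t$ strictly inside the interval, Theorem \ref{vanishing} would force $H^i(Q_n,F_*(\O_{Q_n}(t))\otimes\Sigma(j))=0$ for \emph{all} $j$, the automorphism of $Q_n$ swapping $\Sigma_{\pm}$ extends this to both spinor bundles, and Corollary \ref{dir-sum1} then says $F_*(\O_{Q_n}(t))$ is a direct sum of line bundles — contradicting Corollary \ref{dir-sum-lb}, since $t\not\equiv d_N\pmod p$. So in the paper the non-vanishing is ultimately extracted from the Hilbert-function computation of Section 2 (the positivity of $\gamma_N$ in Lemmas \ref{Bl-Cl} and \ref{sum-B}), packaged through the ACM splitting criterion. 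You instead run a direct induction on $n$ with an explicit $\PP^1\times\PP^1$ base case, propagating non-vanishing upward through the two hyperplane-restriction sequences (lower half of the interval via surjectivity onto $H^i$ of $Q_{n-1}$, upper half via injectivity of the connecting map from $H^{i-1}$ of $Q_{n-1}$), and handling the boundary levels $i=1$ and $i=n-1$ by Lemma \ref{spin-duality} and Serre duality; I have checked the half-interval bookkeeping ($r\le\frac{p-1}{2}$ versus $r\ge\frac{p+1}{2}$, the shift $d_{N-1}=d_N-\frac{p-1}{2}$, and the substitutions $r\mapsto p-r$ and $t\mapsto p-n-t$) and it is consistent. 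Your approach is longer but self-contained relative to Theorem \ref{vanishing} and the dualities of Section 1 — it bypasses Section 2 and Corollaries \ref{dir-sum1}, \ref{dir-sum-lb} entirely — and it is exactly the style the paper itself uses for Corollary \ref{2-non-vanishing}; the paper's argument buys brevity from machinery it needs anyway for Theorem \ref{decomposition}. One trivial slip: in the base case $(p-t-1)(t+1)>0$ precisely for $-1<t<p-1$, not $0<t<p-1$; you do need (and do have) non-vanishing at $t=0$.
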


\begin{proof}
If $H^i(Q_n, F^*\Sigma (t)) = 0$ for some $d_N-ip <t< d_N-(i-1)p$, then
by  Theorem \ref{vanishing}
$$H^i(Q_n, F_*(\O_{Q_n} (t))\otimes \Sigma (j))=H^i(Q_n, F^*\Sigma
(t+jp)) = 0$$ for all integers $j$.

Let us note that if $n$ is even then there exists an automorphism $a:
Q_n\to Q_n$ such that $a^*\Sigma_{\pm}\simeq \Sigma_{\mp}$. Therefore
cohomology groups of $F^*\Sigma_{+}(t+jp)$ and $F^*\Sigma_{-}(t+jp)$ are the
same.  In particular, the above vanishing holds for all spinor bundles
on $Q_n$ and we can apply Corollary \ref{dir-sum1}.  But then we get
contradiction with Corollary \ref{dir-sum-lb}.
\end{proof}

\begin{Theorem} \label{2-vanishing}
Let $\Sigma_1, \Sigma_2$ be spinor bundles on $Q_{n}$, $n\ge 2$.  Then
for any $0<i<n$ we have $H^i(Q_n, \Sigma _1\otimes F^*\Sigma _2(t))=0$ if
$t\le d_N-ip$ or $ t\ge d_N-(i-1)p+1$.
\end{Theorem}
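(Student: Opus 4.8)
The plan is to mimic closely the strategy used in the proof of Theorem \ref{vanishing}, but now carrying along an extra, non-Frobenius-twisted spinor factor $\Sigma_1$. First I would reduce to the case $t\ge d_N-(i-1)p+1$ using Lemma \ref{2-spin-duality}: that duality exchanges $H^i(Q_n,\Sigma_1\otimes (F^s)^*\Sigma_2(d_{n,s}-(i-1)q+j))$ with its dual after $j\mapsto -j$, so (for $s=1$) vanishing in the range $t\ge d_N-(i-1)p+1$ is equivalent to vanishing in the range $t\le d_N-(i-1)p-1 = d_N-ip+(p-1)$... here one has to be slightly careful: the stated range $t\le d_N-ip$ is what the duality pairs with $t\ge d_N-(i-1)p+1$ shifted appropriately, so I would first check the arithmetic of the duality to confirm the two stated ranges are genuinely swapped, and then it suffices to handle one of them, say the upper range.

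Next, for the upper range I would first establish the ``anchor'' vanishing at (and beyond, by periodicity mod $p$) the critical twist. Using the projection formula, $H^i(Q_n,\Sigma_1\otimes F^*\Sigma_2(d_N+tp))=H^i(Q_n, \Sigma_1\otimes F_*(\O_{Q_n}(d_N))\otimes\Sigma_2(t))$. By Corollary \ref{dir-sum-lb}, $F_*(\O_{Q_n}(d_N))$ is a direct sum of line bundles, so this reduces to $H^i(Q_n,\Sigma_1\otimes\Sigma_2(t'))$ for various $t'$, and by Lemma \ref{tensor-spin} (together with sequences (\ref{(i)})--(\ref{(iii)}) and Theorem \ref{coh-spin} as in the proof of Corollary \ref{dir-sum1}) these intermediate-cohomology groups all vanish for $0<i<n$. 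That gives $H^i(Q_n,\Sigma_1\otimes F^*\Sigma_2(t))=0$ whenever $t\equiv d_N\pmod p$, in particular at $t=d_N-(i-1)p+1-1=d_N-(i-1)p$... wait, $d_N-(i-1)p\equiv d_N\pmod p$, good, so the anchor at $t=d_N-(i-1)p$ is covered, and I need vanishing for all $t\ge d_N-(i-1)p+1$.

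Then I would run the induction on $n$ exactly as in Theorem \ref{vanishing}. The base case $n=2$ is a direct check on $\PP^1\times\PP^1$. For the inductive step I restrict to a smooth hyperplane section quadric $Q_{n-1}\subset Q_n$: the restriction of a spinor bundle is again a spinor bundle or a sum of two spinor bundles (restriction of spin representations), so both $\Sigma_1|_{Q_{n-1}}$ and $\Sigma_2|_{Q_{n-1}}$ are of this form, and $F^*$ commutes with restriction. From the short exact sequence
$$0\to \Sigma_1\otimes F^*\Sigma_2(t)\to \Sigma_1\otimes F^*\Sigma_2(t+1)\to \bigl(\Sigma_1\otimes F^*\Sigma_2(t+1)\bigr)|_{Q_{n-1}}\to 0$$
and the induction hypothesis applied on $Q_{n-1}$ (with $d_{N}$ replaced by $d_{N-1}=(n-1)\frac{p-1}{2}$; one must check the range $t\ge d_{N}-(i-1)p-1$ implies $t+1\ge d_{N-1}-(i-1)p+1$, i.e.\ a comparison $d_N - d_{N-1} = \frac{p-1}{2}\ge 2$, valid since $p>2$ gives $p\ge 3$ — so actually for $p=3$ this is $d_N-d_{N-1}=1$ and the inequality is tight, needs checking), one gets surjections $H^i(Q_n,\Sigma_1\otimes F^*\Sigma_2(t))\twoheadrightarrow H^i(Q_n,\Sigma_1\otimes F^*\Sigma_2(t+1))$ for $t$ in the relevant range. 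Combined with the anchor vanishing at $t=d_N-(i-1)p$, this propagates vanishing to all $t\ge d_N-(i-1)p+1$ (note the off-by-one: here the relevant vanishing region is shifted by $+1$ compared to Theorem \ref{vanishing} because of the extra $\Sigma_1$, which is exactly the discrepancy between $d_N-(i-1)p$ and $d_N-(i-1)p+1$ in the two statements).

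The main obstacle I anticipate is \emph{not} the induction itself but getting all the off-by-one bookkeeping right: the extra untwisted factor $\Sigma_1$ shifts the vanishing window by one unit relative to Theorem \ref{vanishing}, and one must verify (a) that Lemma \ref{2-spin-duality}'s reflection really pairs the two stated ranges $t\le d_N-ip$ and $t\ge d_N-(i-1)p+1$ rather than off-by-one versions of them, and (b) that the inductive comparison of dimensions $n$ and $n-1$ still closes up the window — in particular the tight case $p=3$, where $d_N-d_{N-1}=1$, must be checked to ensure the hyperplane-restriction sequence still gives the needed surjectivity on $H^i$ throughout the claimed range. A secondary point is verifying that the anchor step genuinely covers the twist $t=d_N-(i-1)p$ (it does, since that twist is $\equiv d_N\bmod p$), so that the surjections have something nonzero-free to propagate from.
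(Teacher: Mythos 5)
Your overall architecture --- anchor at a twist congruent to $d_N$ modulo $p$, propagate by hyperplane-restriction surjections, then dualize --- does not work here, and the failure is not the off-by-one bookkeeping you flag but the anchor itself. First, the projection formula is misapplied: $\Sigma_1$ is not a Frobenius pull-back, so $H^i(Q_n,\Sigma_1\otimes F^*\Sigma_2(d_N+tp))$ equals $H^i(Q_n,F_*(\Sigma_1(d_N))\otimes \Sigma_2(t))$, not $H^i(Q_n,\Sigma_1\otimes F_*(\O_{Q_n}(d_N))\otimes\Sigma_2(t))$, and $F_*(\Sigma_1(d_N))$ is not a direct sum of line bundles (Proposition \ref{splitting2}(2)). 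Second, and decisively, the groups you reduce to, $H^i(Q_n,\Sigma_1\otimes\Sigma_2(t'))$, do \emph{not} all vanish for $0<i<n$: Lemma \ref{tensor-spin} computes $h^1(\Sigma_1\otimes\Sigma_2)$ to be $1$ in several cases and explicitly records that $\ext^i(\Sigma_1(i),\Sigma_2)\ne 0$ for suitable spinor bundles and every $0<i<n$. In fact your proposed anchor $t=d_N-(i-1)p$ is precisely a point of \emph{non}-vanishing: Corollary \ref{2-non-vanishing} asserts $H^i(Q_n,S_n\otimes F^*S_n(t))\ne 0$ for $d_N-ip+1\le t\le d_N-(i-1)p$. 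So there is no vanishing point in the congruence class of $d_N$ modulo $p$ from which your surjections could propagate; the non-vanishing window $[d_N-ip+1,\,d_N-(i-1)p]$ is exactly why the statement is shifted by one relative to Theorem \ref{vanishing}.

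The paper's actual argument is different in kind. After reducing to $i=1$ (via the chain of isomorphisms in the proof of Lemma \ref{2-spin-duality}), it tensors the Frobenius pull-back of the spinor sequence and the untwisted spinor sequence against $\Sigma$, obtains a four-term exact sequence ending in $H^1(\Sigma\otimes F^*\Sigma(t-p))$, derives from it a recurrence for $h^0(\Sigma\otimes F^*\Sigma(t))$, and proves by induction on $t$ the identity $h^0(\Sigma\otimes F^*\Sigma(t))+h^0(\Sigma\otimes F^*\Sigma(t+1))=2^{m+1}h^0(F^*\Sigma(t))$ for $t\le d_N-1$; this dimension count forces the $H^1$ at the end of the four-term sequence to vanish for $t\le d_N-p$, and only then is Lemma \ref{2-spin-duality} invoked to transfer this to $t\ge d_N+1$. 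On that last point, note the duality is centered at $d_{n,s}=(n-1)\frac{p-1}{2}$, not at $d_N=n\frac{p-1}{2}$, which is exactly what makes the two asymmetric-looking ranges $t\le d_N-ip$ and $t\ge d_N-(i-1)p+1$ swap; your reflection about $d_N-(i-1)p$ is off by $\frac{p-1}{2}$. Induction on the dimension via hyperplane sections is used in this part of the paper only for the non-vanishing statement (Corollary \ref{2-non-vanishing}), not for the vanishing theorem you are trying to prove.
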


\begin{proof}
For simplicity of notation let us consider only odd dimensional
quadrics $Q_n$, $n=2m+1$ (the proof in the even dimensional case is
essentially the same). As before we can easily reduce to the case
$i=1$ (the proof of Lemma \ref{2-spin-duality} gives vanishing of
higher intermediate cohomology groups). Let us note the following
short exact sequences:
\begin{equation}\label{seq1}
0\to F^*\Sigma (t-p)\to \O _{Q_n} (t-p)^{2^{m+1}}\to F^*\Sigma (t)\to 0
\end{equation}
and
\begin{equation}\label{seq2}
0\to \Sigma (t-p)\to \O _{Q_n} (t-p)^{2^{m+1}}\to \Sigma (t-p+1)\to 0.
\end{equation}
 Using the long cohomology sequnces for appropriate twists we get
the following exact sequences:
$$0\to H^0(\Sigma \otimes F^*\Sigma (t-p))\to H^0(\Sigma (t-p))^{\oplus 2^{m+1}}\to
H^0(\Sigma \otimes F^*\Sigma (t))\to H^1(\Sigma \otimes F^*\Sigma (t-p)) \to 0
$$
for all $t$, and by Theorem \ref{vanishing}
\begin{equation}\label{seq3}
0\to H^0(\Sigma \otimes F^*\Sigma (t-p))\to H^0(F^*\Sigma (t-p))^{\oplus 2^{m+1}} \to
H^0(\Sigma \otimes F^*\Sigma (t-p+1))\to H^1(\Sigma \otimes F^*\Sigma (t-p)) \to 0
\end{equation}
for $t\le d_N$.  Using these sequences we get the following
recurrence equation
\begin{equation}\label{recur}
h^0(\Sigma \otimes F^*\Sigma (t))=2^{m+1}(h^0(\Sigma
(t-p))-h^0(F^*\Sigma (t-p))) +h^0(\Sigma \otimes F^*\Sigma (t-p+1))
\end{equation}
for $t\le d_N$. Let us first prove that
\begin{equation}\label{seq4}
h^0(\Sigma \otimes F^*\Sigma (t))+h^0(\Sigma \otimes F^*\Sigma
(t+1))=2^{m+1}h^0(F^*\Sigma (t)) \label{h+h=h}
\end{equation}
for $t\le d_N-1$.  We prove it by induction on $t$ starting with very
negative $t$ for which the equality is obvious. By (\ref{recur}) and
the induction assumption for $t\le d_N-1$ the left hand side of (\ref{h+h=h}) is equal
to
$$
\begin{array}{rl}
&2^{m+1}(h^0(\Sigma (t-p))-h^0(F^*\Sigma
(t-p))+h^0(\Sigma(t-p+1))-h^0(F^*\Sigma (t-p+1)))\\ &+h^0(\Sigma
\otimes F^*\Sigma (t-p+1))+h^0(\Sigma \otimes F^*\Sigma (t-p+2))=\\
=&2^{m+1}(h^0(\Sigma (t-p))-h^0(F^*\Sigma (t-p))+h^0(\Sigma
(t-p+1))).\\
\end{array}
$$
But sequences (\ref{seq1}) and (\ref{seq2}) imply that for $t\le d_N$
$$  \begin{array}{rl}
h^0(F^*\Sigma (t-p))+h^0(F^*\Sigma (t))&=2^{m+1} h^0(\O_{Q_n}(t-p))=
h^0(\Sigma (t-p))+h^0(\Sigma (t-p+1)),\\
\end{array}$$
which proves the required assertion.

Now let us note that (\ref{seq3}) and (\ref{seq4}) imply vanishing of
$H^1(\Sigma \otimes F^*\Sigma (t))$ for $t\le d_N-p$.  By Lemma
\ref{2-spin-duality} this implies vanishing $H^1(\Sigma \otimes
F^*\Sigma (t))$ for $t\ge d_N+1$.
\end{proof}

\begin{Corollary} \label{2-non-vanishing}
Let us set $S_n=\Sigma$ if $n$ is odd and $S_n=\Sigma_{+}\oplus \Sigma_{-}$ if
$n$ is even.  Then for any $0<i<n$ we have $H^i(Q_n, S_n \otimes F^*S_n
(t))\ne 0$ if $d_N-ip+1\le t\le d_N-(i-1)p$.
\end{Corollary}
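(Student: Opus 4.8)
The plan is to prove the statement by contradiction, in the spirit of Corollary \ref{non-vanishing}, after first reducing everything to non-vanishing of $H^1$. Assume $n\ge 3$ (the case $n=2$ is an explicit computation on $\PP^1\times\PP^1$, as in the proof of Theorem \ref{vanishing}). Tensoring the Frobenius pull-backs of (\ref{(i)})--(\ref{(iii)}) — sequences of the shape (\ref{seq1}) — with a spinor bundle $\Sigma_1$ and running the long exact cohomology sequence, while using that spinor bundles are ACM (Theorem \ref{coh-spin}), the connecting maps give isomorphisms
$$H^i(Q_n,\Sigma_1\otimes F^*\Sigma_2(t))\simeq H^{i-1}(Q_n,\Sigma_1\otimes F^*\Sigma_2(t+p))\quad(2\le i\le n-1,\ d_N-ip+1\le t\le d_N-(i-1)p-1)$$
and $H^i(Q_n,\Sigma_1\otimes F^*\Sigma_2(d_N-(i-1)p))\simeq H^{i+1}(Q_n,\Sigma_1\otimes F^*\Sigma_2(d_N-ip))$ for $1\le i\le n-2$. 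Iterating the first isomorphism carries the ``interior'' twists at level $i$ down to level $1$; iterating the second carries the ``endpoint'' twist $t=d_N-(i-1)p$ up to $H^{n-1}(Q_n,\Sigma_1\otimes F^*\Sigma_2(d_N-(n-2)p))$, which Serre duality (using $\Sigma^*\simeq\ti\Sigma(1)$ and $\omega_{Q_n}=\O(-n)$) identifies with the dual of $H^1(Q_n,\ti\Sigma_1\otimes F^*\ti\Sigma_2(d_N-p+1))$. Summing over all pairs of spinor bundles, it is enough to prove that $H^1(Q_n,S_n\otimes F^*S_n(t))\ne 0$ for every $t$ with $d_N-p+1\le t\le d_N-1$.

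So suppose $H^1(Q_n,S_n\otimes F^*S_n(t_0))=0$ for some such $t_0$. Since $S_n\otimes F^*S_n$ is the direct sum of the bundles $\Sigma_1\otimes F^*\Sigma_2$ over all pairs of spinor bundles, this forces $H^1(Q_n,\Sigma_1\otimes F^*\Sigma_2(t_0))=0$ for each pair. As $t_0+jp\notin[d_N-p+1,d_N]$ for $j\ne 0$, Theorem \ref{2-vanishing} gives $H^1(Q_n,\Sigma_1\otimes F^*\Sigma_2(t_0+jp))=0$ for all $j\in\ZZ$; by the projection formula $H^1(Q_n,F_*(S_n(t_0))\otimes\Sigma_2(j))=0$ for all $j$ and all spinor $\Sigma_2$. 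Since $F_*$ of an ACM bundle is again ACM (by the projection formula, $H^i(Q_n,F_*(S_n(t_0))(j))=H^i(Q_n,S_n(t_0+pj))$, which vanishes for $0<i<n$ by Theorem \ref{coh-spin}), Corollary \ref{dir-sum1} forces $F_*(S_n(t_0))$ to be a direct sum of line bundles.

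It thus remains to show that $F_*(S_n(m))$ is never a direct sum of line bundles, for any $m\in\ZZ$; this is where the real difficulty lies. The route I would take mirrors Corollary \ref{dir-sum-lb}: the Euler sequence on $\PP^N$ restricted to $Q_n$, pulled back by $F$ and tensored with $S_n(u)$, yields $H^1(Q_n,S_n\otimes F^*\psi_1(u))=\coker\!\big(H^0(Q_n,S_n(u))^{N+1}\to H^0(Q_n,S_n(u+p))\big)$, so that $\bigoplus_u H^1(Q_n,S_n\otimes F^*\psi_1(u))$ is an explicit finite-dimensional graded module — a ``spinor counterpart'' of the module $B$ — whose Hilbert function is determined by $h^0(Q_n,S_n(u))=2^{[N/2]}\binom{u+n-1}{n}$ for $u\ge 1$ (Theorem \ref{coh-spin}). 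One then computes $\sum_u h^1(Q_n,F_*(S_n(m))\otimes\psi_1(u))=\sum_u h^1(Q_n,S_n\otimes F^*\psi_1(u+mp))$ via the Hilbert-function estimates of Section 2 (in the style of Lemmas \ref{sum-B} and \ref{Bl-Cl}) and compares it with $\rk F_*(S_n(m))=p^n\cdot\rk S_n$, the value that a direct sum of line bundles would force by the corollary to Proposition \ref{psixspin}; these two numbers never coincide, which is the desired contradiction. (Alternatively one may invoke the counit $F^*F_*(S_n(m))\to S_n(m)$ — surjective since $F$ is finite and flat — which, if $F_*(S_n(m))=\bigoplus_k\O(a_k)$, presents $S_n$ as a quotient of $\bigoplus_k\O(pa_k-m)$; combining this with the sequences (\ref{(i)})--(\ref{(iii)}) and Corollary \ref{dir-sum-lb} leads to a contradiction as well, but the Hilbert-function argument seems cleaner.) Establishing that push-forwards of twisted spinor bundles always retain a spinor summand is the step I expect to demand the most care; everything before it is a routine diagram chase.
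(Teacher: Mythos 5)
Your reduction to $i=1$ (via the twisted pull-backs of (\ref{(i)})--(\ref{(iii)}) and Serre duality) is sound and matches the paper's one-line ``as before it suffices to treat $i=1$''. Your contradiction setup is also valid: if $H^1(S_n\otimes F^*S_n(t_0))=0$ for some $t_0\in[d_N-p+1,d_N]$, then Theorem \ref{2-vanishing} kills all the other twists $t_0+jp$, the projection formula converts this into $H^1(F_*(S_n(t_0))\otimes\Sigma(j))=0$ for all $\Sigma$ and $j$, and Corollary \ref{dir-sum1} forces $F_*(S_n(t_0))$ to split into line bundles. The problem is that you have reduced the corollary to the assertion that $F_*(\Sigma(m))$ is \emph{never} a direct sum of line bundles, and this assertion is exactly the hard content: in the paper it appears as Proposition \ref{splitting2}(2) and is \emph{deduced from} Corollary \ref{2-non-vanishing}, not used to prove it. Your proposed route to it --- building a ``spinor analogue'' of the module $B$ and showing $\sum_u h^1(F_*(S_n(m))\otimes\psi_1(u))>p^n\rk S_n$ --- is only a sketch. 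The dimension of $\coker\bigl(H^0(S_n(u))^{N+1}\to H^0(S_n(u+p))\bigr)$ is not an Euler-characteristic count; for the line-bundle module $B$ the corresponding computation occupied all of Section 2 and hinged on the delicate Propositions \ref{diff-new}, \ref{diff} and \ref{C=B}, none of which you have analogues of for the spinor module (nor is there an analogue of \cite[Theorem 4]{BM} identifying your graded module with something computable). ``These two numbers never coincide'' is the theorem, not an observation.

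For comparison, the paper closes the argument quite differently: by induction on $n$, restricting to a hyperplane quadric $Q_{n-1}$ (where $S_n$ restricts to a sum of spinor bundles). The restriction sequence gives
$$H^1(S_n\otimes F^*S_n(t-1))\to H^1(S_n\otimes F^*S_n(t))\to H^1\bigl(S_n\otimes F^*S_n(t)|_{Q_{n-1}}\bigr)\to H^2(S_n\otimes F^*S_n(t-1)),$$
and Theorem \ref{2-vanishing} plus Lemma \ref{2-spin-duality} reduce everything to non-vanishing at the single twist $t=d_N$; taking $t=d_N-p+1$ in the sequence, the first term vanishes, the last term is identified by duality with $H^1(S_n\otimes F^*S_n(d_N))$, and the middle term is non-zero by the induction hypothesis on $Q_{n-1}$, which yields the claim. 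The base case $n=2$ is an explicit computation on $\PP^1\times\PP^1$. If you want to salvage your approach you must either supply the full Hilbert-function computation for the spinor module or replace your final step with an inductive restriction argument of this kind.
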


\begin{proof}
As before it is sufficient to prove the corollary for $i=1$.  The
proof is by induction on $n$. For $n=2$ the group $H^1(S_2 \otimes
F^*S_2(t))$ contains $H^1(\O_{{\PP^1}\times {\PP^1}}(t, t-p-1))$ as a
direct summand and this cohomology group is non-zero if $0\le t\le
p-1$.

Assume we know the statement in dimensions less than $n$.  Then we
have an exact sequence
$$H^1(S_n\otimes F^*S_n(t-1))\to H^1(S_n\otimes F^*S_n(t))\to
H^1(S_n\otimes F^*S_n(t)|_{Q_{n-1}})\to H^2(S_n\otimes F^*S_n(t-1)).$$
This sequence and Theorem \ref{2-vanishing} imply that
$h^1(S_n\otimes F^*S_n(t-1)) \le h^1(S_n\otimes F^*S_n(t))$
if $t\ge d_{n}+1$, so by Lemma \ref{2-spin-duality} it is sufficient to prove
that $H^1(S_n\otimes F^*S_n(d_N))\ne 0$.
But Lemma \ref{2-spin-duality} implies also that
$$H^2(S_n\otimes F^*S_n(d_N-p))\simeq H^1(S_n\otimes F^*S_n(d_N))
\simeq H^1(S_n\otimes F^*S_n(d_N-p+1))^*,$$ so the above sequence
applied for $t=d_N-p+1$ gives by the induction assumption
non-vanishing of $H^1(S_n\otimes F^*S_n(d_N))$.
\end{proof}

\section{Decomposition of Frobenius push-forwards}

If $E$ is an ACM bundle then $F_*E$ is also an ACM bundle. In
particular, if $n\ge 3$ then Frobenius push-forwards of line bundles
and twisted spinor bundles split into a direct sum of line bundles and
twisted spinor bundles. In this section we study the corresponding
decompositions.

Let $S_n$ be as in Corollary \ref{2-non-vanishing}. Let $q=p^s$ for
some non-negative integer $s$.

\begin{Proposition} \label{splitting}
\begin{enumerate}
\item If $F^s_*(\O_{Q_n}(j))$ contains $\O_{Q_n}(-t)$ as a direct summand then
$0\le tq+j\le n(q-1)$.
\item If $F^s_*(S_n(j))$ contains $\O_{Q_n} (-t)$ as a direct summand
then  $1\le tq+j\le n(q-1)$.
\end{enumerate}
\end{Proposition}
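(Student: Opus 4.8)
The plan is to prove both statements together by adjointness, using the vanishing theorems for Frobenius pull-backs of spinor bundles and line bundles proven in Section 3. The key observation is that a direct summand of $F^s_*(E)$ corresponds, under the adjunction $\hom_{Q_n}(L, F^s_*E) \simeq \hom_{Q_n}((F^s)^*L, E)$, to a nonzero map $(F^s)^*\O_{Q_n}(-t) = \O_{Q_n}(-tq) \to E(-j)$, i.e. a nonzero section of $E(tq+j)$; and dually, a surjection from $F^s_*(E)$ onto $\O_{Q_n}(-t)$ corresponds to a nonzero map $E(-j) \to \O_{Q_n}(-tq)$, i.e. a nonzero section of $E^*(-tq-j)$.

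For part (1), with $E = \O_{Q_n}(j)$: if $\O_{Q_n}(-t)$ is a direct summand of $F^s_*(\O_{Q_n}(j))$, then both the inclusion and the projection are nonzero, so $H^0(Q_n, \O_{Q_n}(tq+j)) \ne 0$ and $H^0(Q_n, \O_{Q_n}(-tq-j+ (\text{twist from } \omega \text{ or directly}))) \ne 0$; more simply, the projection gives a nonzero map $\O_{Q_n}(j) \to F^s_*\O_{Q_n}(-t)$ whence $(F^s)^*$ adjoint gives a nonzero section of $\O_{Q_n}(-tq-j+\text{nothing})$... Let me instead run it cleanly: the inclusion $\O_{Q_n}(-t)\hookrightarrow F^s_*(\O_{Q_n}(j))$ adjoints to a nonzero map $\O_{Q_n}(-tq)\to \O_{Q_n}(j)$, so $tq+j\ge 0$. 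The splitting also yields a surjection $F^s_*(\O_{Q_n}(j))\to \O_{Q_n}(-t)$, which adjoints to a nonzero map $\O_{Q_n}(j)\to \O_{Q_n}(-tq)$ after applying $(F^s)^*$ to the source... that gives $tq+j\le 0$, which is wrong. The correct upper bound must come from Serre duality: $F^s_*$ has a right adjoint $F^{s!}(-) = (F^s)^*(-)\otimes \omega_{Q_n}^{?}$, or more directly one uses that $F^s_*(\O_{Q_n}(j))^* \otimes \omega$ relates to $F^s_*$ of a twist. So I would instead argue: the surjection dualizes to an inclusion $\O_{Q_n}(t)\hookrightarrow F^s_*(\O_{Q_n}(j))^*$, and by relative Serre duality $F^s_*(\O_{Q_n}(j))^* \simeq F^s_*(\O_{Q_n}(-j)\otimes \omega_{Q_n}\otimes (F^s)^*\omega_{Q_n}^{-1})$; since $\omega_{Q_n} = \O_{Q_n}(-n-1+1)=\O_{Q_n}(-n)$, the twist is $\O_{Q_n}(-j - n + nq) = \O_{Q_n}(-j+n(q-1))$. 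Applying the already-proven bound (the lower bound, part analogous to above) to this summand gives $tq + (-j+n(q-1)) \ge 0$, i.e. $tq+j \le n(q-1)$. This is exactly the claimed range.

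For part (2), with $E = S_n(j)$, I would run the same adjunction argument but now using Theorem \ref{coh-spin}: a spinor bundle $\Sigma$ has $h^0(Q_n,\Sigma(t))=0$ for $t\le 0$. The inclusion $\O_{Q_n}(-t)\hookrightarrow F^s_*(S_n(j))$ adjoints to a nonzero map $\O_{Q_n}(-tq)\to S_n(j)$, i.e. a nonzero section of $S_n(tq+j) = \Sigma_{\pm}(tq+j)$; by Theorem \ref{coh-spin} this forces $tq+j\ge 1$. For the upper bound I would again dualize the projection: the surjection $F^s_*(S_n(j))\to \O_{Q_n}(-t)$ dualizes to $\O_{Q_n}(t)\hookrightarrow F^s_*(S_n(j))^*$, and $S_n^*\simeq S_n(1)$ (up to swapping $\Sigma_\pm$, which $S_n$ absorbs), so by relative Serre duality $F^s_*(S_n(j))^* \simeq F^s_*(S_n(1-j)\otimes \omega_{Q_n}\otimes(F^s)^*\omega_{Q_n}^{-1}) = F^s_*(S_n(1-j-n+nq))$; the lower bound just proven applied here gives $tq + (1-j+n(q-1)) \ge 1$, i.e. $tq+j\le n(q-1)$.

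The main obstacle I expect is getting the relative Serre duality bookkeeping exactly right, in particular the precise twist $\omega_{Q_n}\otimes (F^s)^*\omega_{Q_n}^{-1} = \O_{Q_n}(n(q-1))$ and the identification $S_n^*\simeq S_n(1)$ in a way that is uniform over the parity of $n$ and over the factors $\Sigma_{\pm}$ — this is why it is convenient that $S_n$ was defined as $\Sigma_+\oplus\Sigma_-$ in the even case, since $\Sigma_\pm^*\simeq \Sigma_\mp(1)$ or $\Sigma_\pm(1)$ depending on $n\bmod 4$, but in either case $S_n^*\simeq S_n(1)$. Everything else is a formal consequence of the adjunction $(F^s)^*\dashv F^s_*$ together with $H^0$-vanishing statements that are already available (trivially for line bundles, and from Theorem \ref{coh-spin} for spinor bundles).
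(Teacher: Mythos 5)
Your argument is correct and is essentially the paper's: the lower bounds come from the adjunction/projection formula reducing to non-vanishing of $H^0(\O_{Q_n}(tq+j))$ resp.\ $H^0(S_n(tq+j))$ (with Theorem \ref{coh-spin} giving $tq+j\ge 1$ in the spinor case), and the upper bounds come from duality with the twist $n(q-1)$ --- the paper phrases this as Serre duality applied to $H^n(F^s_*(E)\otimes\O_{Q_n}(t-n))$, which is the same computation as your relative duality $F^s_*(E)^*\simeq F^s_*(E^*(n(q-1)))$. One small slip: in your upper-bound step the dualized summand is $\O_{Q_n}(+t)=\O_{Q_n}(-(-t))$, so the already-proven lower bound applied to it reads $-tq+j'\ge 0$ (resp.\ $-tq+j'\ge 1$) rather than $tq+j'\ge 0$; with that sign corrected your stated conclusions $tq+j\le n(q-1)$ do follow.
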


\begin{proof}

(1) If $F^s_*(\O_{Q_n}(j))$ contains $\O_{Q_n}(-t)$ as a direct summand then
$$0\ne H^0(\O_{Q_n})\subset H^0(F^s_*(\O_{Q_n}(j))\otimes
\O_{Q_n}(t))=H^0(\O_{Q_n}(j+tq)),$$ which implies that $tq+j\ge
0$. Similarly,
$$0\ne H^n(\omega_{Q_n})\subset H^n(F^s_*(\O_{Q_n}(j))\otimes
\O_{Q_n}(t-n))=H^n(\O_{Q_n}(j+(t-n)q))$$ which implies that $tq+j\le
n(q-1)$.

(2) If $F^s_*(S_n(j))$ contains $\O_{Q_n}(-t)$ as a direct summand then
$$0\ne H^0(\O_{Q_n})\subset H^0(F^s_*(S_n(j))\otimes \O_{Q_n}(t))=H^0(S_n(j+tq)),$$
which implies that $tq+j\ge 1$. Similarly, by the Serre duality
$$0\ne H^n(\omega_{Q_n})\subset H^n(F^s_*(S_n(j))\otimes
\O_{Q_n}(t-n))=H^n(S_n(j+(t-n)q))=(H^0(S_n^*(n(q-1)-tq-j)))^*$$
which implies that $tq+j\le n(q-1)$.
\end{proof}

\begin{Proposition} \label{splitting2}
\begin{enumerate}
\item $F_*(\O_{Q_n}(j))$ contains $\Sigma(-t)$ as a direct summand if and
only if $d_N-p+1\le tp+j\le d_N-1$.  In this case, $F_*(\O_{Q_n}(j))$
contains $S_n(-t)$.  In particular,  $F_*(\O_{Q_n}(j))$ contains
at most one twist of a spinor bundle.
\item $F_*(S_n(j))$ contains $\Sigma (-t)$ as a direct summand if and
only if $d_N-p+1\le tp+j\le d_N$. In this case, $F_*(S_n(j))$
contains also $S_n(-t)$.  In particular,  $F_*(S_n(j))$ contains
exactly one twist of a spinor bundle.
\end{enumerate}
\end{Proposition}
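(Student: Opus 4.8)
The plan is to reduce everything to the cohomological data already established in Sections 1--3, via the standard dictionary between direct summands of an ACM bundle and intermediate cohomology of its twists by spinor bundles. Concretely, $\Sigma(-t)$ is a direct summand of $F_*(\O_{Q_n}(j))$ if and only if the natural evaluation/coevaluation maps split, and since $\Sigma$ is strongly stable and ACM (Theorem \ref{coh-spin}), the multiplicity of $\Sigma(-t)$ inside the direct sum decomposition supplied by Theorem \ref{ACM-Q} can be read off from $\hom(\Sigma(-t), F_*(\O_{Q_n}(j)))$ and $\hom(F_*(\O_{Q_n}(j)), \Sigma(-t))$, or more usefully from a single intermediate cohomology group. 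Using Corollary \ref{dir-sum1} together with Lemma \ref{tensor-spin}, the presence of a spinor summand is detected by non-vanishing of $H^i(Q_n, F_*(\O_{Q_n}(j))\otimes \Sigma(\ell))$ for some $0<i<n$, which by the projection formula equals $H^i(Q_n, F^*\Sigma(j+\ell p))$ (for part (1)) or $H^i(Q_n, \Sigma \otimes F^*\Sigma(j+\ell p))$-type groups (for part (2)).

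First I would treat part (1). The key point is that $F_*(\O_{Q_n}(j))$ is ACM (as noted at the start of Section 4), so by Theorem \ref{ACM-Q} it decomposes into line bundles and twisted spinor bundles; moreover, tensoring with $\Sigma(\ell)$ and taking $H^1$, the line-bundle summands contribute nothing new beyond what Lemma \ref{tensor-spin} controls, so that $F_*(\O_{Q_n}(j))$ contains a twist of a spinor bundle if and only if $H^1(Q_n, F^*\Sigma(j+\ell p))\neq 0$ for some $\ell$ and some spinor bundle $\Sigma$. Theorem \ref{vanishing} and Corollary \ref{non-vanishing} together say precisely that $H^1(Q_n, F^*\Sigma(t))\neq 0$ exactly when $d_N - p < t < d_N$, i.e. $d_N - p + 1 \le t \le d_N - 1$; writing $t = j + \ell p$ and setting the summand to be $\Sigma(-t')$ with $t' = -\ell$, this translates to $d_N - p + 1 \le t'p + j \le d_N - 1$ after matching the twist — I would be careful here to reconcile the bookkeeping between the ``internal'' twist $\ell$ appearing from the projection formula and the twist $-t$ of the summand, using that $H^1(F^*\Sigma(t))$ being a one-dimensional-per-spinor contribution (Lemma \ref{tensor-spin}) forces the multiplicity to be exactly one. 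To see that it is $S_n(-t)$ that appears (not just one of the two spinor bundles in even dimension), I would invoke the automorphism $a: Q_n\to Q_n$ with $a^*\Sigma_\pm \simeq \Sigma_\mp$ used in the proof of Corollary \ref{non-vanishing}, together with the fact that $F_*(\O_{Q_n}(j))$ is $a$-invariant (since $\O_{Q_n}(j)$ is and $F$ commutes with $a$), so both $\Sigma_+(-t)$ and $\Sigma_-(-t)$ appear with equal multiplicity.

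For part (2), the argument is parallel but uses Theorem \ref{2-vanishing} and Corollary \ref{2-non-vanishing} in place of Theorem \ref{vanishing} and Corollary \ref{non-vanishing}. Here $F_*(S_n(j))$ is again ACM, and a spinor summand $\Sigma(-t)$ is detected by non-vanishing of $H^1(Q_n, S_n \otimes F^*\Sigma(j+\ell p))$; Theorem \ref{2-vanishing} gives vanishing for $j + \ell p \le d_N - p$ or $j + \ell p \ge d_N + 1$, while Corollary \ref{2-non-vanishing} gives non-vanishing for $d_N - p + 1 \le j + \ell p \le d_N$, yielding the stated range $d_N - p + 1 \le tp + j \le d_N$. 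The $a$-invariance argument again upgrades ``$\Sigma(-t)$ is a summand'' to ``$S_n(-t)$ is a summand''. The main obstacle I anticipate is not any single deep input — those are all in hand — but rather the precise multiplicity bookkeeping: one must check that the relevant $H^1$ groups being nonzero corresponds to a summand appearing with multiplicity \emph{exactly} one (so that ``at most one twist'' / ``exactly one twist'' holds), which requires combining the one-dimensionality statements in Lemma \ref{tensor-spin} with a rank/Euler-characteristic count, and one must make sure the shift by $\ell$ in the projection formula is aligned with the sign convention $\Sigma(-t)$ so the inequalities come out with the stated endpoints rather than shifted by $1$ or by $p$.
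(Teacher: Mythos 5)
Your proposal follows essentially the same route as the paper: detect spinor summands of the (ACM) push-forward via $H^1\bigl(F_*(\O_{Q_n}(j))\otimes \Sigma(t)\bigr)=H^1\bigl(F^*\Sigma(j+tp)\bigr)$ (projection formula plus Lemma \ref{tensor-spin}, which kills the contribution of line-bundle summands), read off the range of non-vanishing from Theorem \ref{vanishing} and Corollary \ref{non-vanishing} (resp.\ Theorem \ref{2-vanishing} and Corollary \ref{2-non-vanishing} for part (2)), and upgrade $\Sigma(-t)$ to $S_n(-t)$ by the automorphism exchanging $\Sigma_+$ and $\Sigma_-$. This is exactly the paper's argument and the core of your proof is correct.

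One point where your bookkeeping goes astray: you propose to establish the ``at most one twist'' / ``exactly one twist'' clauses by arguing that the relevant summand occurs with \emph{multiplicity} exactly one, ``forced'' by the one-dimensionality in Lemma \ref{tensor-spin}. That is not what the clause asserts, and the multiplicity-one claim is in fact false: by Theorem \ref{decomposition} the spinor summand of $F_*(\O_{Q_n}(d_N+j))$ occurs with multiplicity $b=2^{[N/2]}\gamma_N(j)$, which is typically large (the dimension of $H^1(F^*\Sigma(j+tp))$ equals this multiplicity, not $1$). What ``at most one twist'' means is that at most one value of $t$ can occur, and this is immediate from the if-and-only-if condition: the interval $d_N-p+1\le tp+j\le d_N-1$ has length $p-1$, so it contains at most one integer congruent to $j$ modulo $p$; the interval in part (2) has length $p$, so it contains exactly one such integer, whence ``exactly one twist''. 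No Euler-characteristic or rank count is needed. With that correction the proof is complete and coincides with the paper's.
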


\begin{proof}

(1) If $F_*(\O_{Q_n}(j))$ contains $\Sigma(-t)$ as a direct summand
then by symmetry (see the proof of Corollary \ref{non-vanishing}) it
also contains $S_n(-t)$. By Lemma \ref{tensor-spin} this happens if
and only if $H^1(Q_n, F^*\Sigma (j+tp))=H^1(F_*(\O_{Q_n}(j)) \otimes
\Sigma (t))\ne 0$ so the assertion follows from Theorem
\ref{vanishing} and Corollary \ref{non-vanishing}.

(2) If $F_*(S_n(j))$ contains $\Sigma(-t)$ as a direct summand
then by symmetry it also contains $S_n(-t)$. By Lemma
\ref{tensor-spin} this happens if and only if $H^1(Q_n, S_n\otimes F^*S_n
(tp+j))=H^1(F_*(S_n(j)) \otimes S_n (t))\ne 0$ so the assertion follows from
Theorem \ref{2-vanishing} and Corollary \ref{2-non-vanishing}.
\end{proof}

\begin{Corollary}\label{quasi-exc}
For any line bundle $\L$ on $Q_n$, $n\ge 3$, the bundle $F_*\L$ is
quasi-exceptional.
\end{Corollary}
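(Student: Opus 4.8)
For any line bundle $\L$ on $Q_n$, $n\ge 3$, the bundle $F_*\L$ is quasi-exceptional.

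The plan is to reduce everything to the explicit decomposition of $F_*\L$. Since $n\ge 3$ we have $\L\simeq\O_{Q_n}(j)$ for some $j\in\ZZ$, and, as observed at the start of this section, $F_*\L$ is an ACM bundle; hence by Theorem \ref{ACM-Q} we may write $F_*\L\simeq V\oplus W$, where $V$ is a direct sum of line bundles $\O_{Q_n}(-t)$ and $W$ is a direct sum of twisted spinor bundles. By Proposition \ref{splitting2}(1) all spinor summands are twisted by one and the same $\O_{Q_n}(-t_0)$. To prove quasi-exceptionality it suffices to show that $\ext^i$ vanishes for every $i>0$ between any two of these summands, and I would organise this into the four blocks $\ext^i(V,V)$, $\ext^i(V,W)$, $\ext^i(W,V)$, $\ext^i(W,W)$.

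For $0<i<n$ the first three blocks are direct sums of groups of the form $H^i(Q_n,\O_{Q_n}(m))$, $H^i(Q_n,\Sigma(m))$ or $H^i(Q_n,\Sigma^*(m))$ for line bundles and twisted spinor bundles, and these all vanish because line bundles and (twisted) spinor bundles on $Q_n$ are ACM (Theorem \ref{coh-spin}). In the block $\ext^i(W,W)$ the twists by $\O_{Q_n}(-t_0)$ cancel, leaving the groups $H^i(Q_n,\Sigma_k^*\otimes\Sigma_l)$ for spinor bundles $\Sigma_k,\Sigma_l$; writing $\Sigma_k^*\simeq\tilde\Sigma_k(1)$ and tensoring the sequences (\ref{(i)}), (\ref{(ii)}), (\ref{(iii)}) by the ACM bundle $\Sigma_k^*$, exactly as in the proof of Corollary \ref{dir-sum1}, one gets $H^i(Q_n,\Sigma_k^*\otimes\Sigma_l)\simeq H^1(Q_n,\tilde\Sigma_k\otimes\Sigma'(i))$ for some spinor bundle $\Sigma'$, which vanishes by Lemma \ref{tensor-spin} since $i\ge 1$. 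Thus $\ext^i(F_*\L,F_*\L)=0$ for $0<i<n$.

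For $i=n$ I would use Serre duality, $\ext^n(F_*\L,F_*\L)\simeq\hom(F_*\L,F_*\L(-n))^*$, and again argue block by block. Between two spinor summands one gets $\hom(\Sigma_k,\Sigma_l(-n))$, which vanishes because spinor bundles are slope stable of equal slope and the twist is negative (as already used in this paper). For the other pairs, Serre duality together with Theorem \ref{coh-spin} identifies the relevant $\hom$ with $H^0$ of a twist of $\O_{Q_n}$ or of a spinor bundle; between two line summands $\O_{Q_n}(-t)$, $\O_{Q_n}(-t')$ it is nonzero only if $t'-t\ge n$, between a line summand and a spinor summand only if $t_0-t\ge n$, and between a spinor summand and a line summand only if $t-t_0\ge n+1$. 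But by Proposition \ref{splitting}(1) every line twist satisfies $0\le tp+j\le n(p-1)$, and by Proposition \ref{splitting2}(1) the spinor twist satisfies $d_N-p+1\le t_0p+j\le d_N-1$ with $d_N=n\frac{p-1}{2}$; subtracting these inequalities and dividing by $p$ yields $|t-t'|\le n-1$ and $-\frac{n}{2}<t-t_0<\frac{n}{2}+1$, so (using $n\ge 2$) none of the three non-vanishing conditions can hold. Hence $\ext^n(F_*\L,F_*\L)=0$, and $F_*\L$ is quasi-exceptional.

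I expect no serious obstacle here; the two points that need a little care are that $\Sigma_k^*\otimes\Sigma_l$ is \emph{not} ACM (it carries a nonzero $H^1$ at the twist $-1$), so the vanishing of its intermediate cohomology at twist $0$ has to be deduced from the sequences (\ref{(i)})--(\ref{(iii)}) together with Lemma \ref{tensor-spin} rather than by quoting ACM-ness, and the elementary arithmetic of the twist bounds in the case $i=n$.
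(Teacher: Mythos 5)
Your proof is correct and is essentially the argument the paper intends: its one-line proof cites exactly Lemma \ref{tensor-spin} and Propositions \ref{splitting} and \ref{splitting2}, which are the same ingredients you use to control the spinor--spinor $\ext$ groups and the admissible twists, with Theorem \ref{ACM-Q} and Theorem \ref{coh-spin} supplying the intermediate-cohomology vanishing. Your careful unpacking of the $i=n$ case via Serre duality and the arithmetic of the twist bounds is exactly what the paper leaves implicit.
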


\begin{proof}
The assertion follows from Lemma \ref{tensor-spin} and Propositions \ref{splitting}
and \ref{splitting2}.
\end{proof}

\medskip

Let us fix an integer $0\le j<p$. By Propositions \ref{splitting} and
\ref{splitting2} we can write $$F_*(\O_{Q_n}(d_N+j))=\bigoplus \O
_{Q_n}(-t)^{a_t}\oplus S_n (1)^b,$$ where $b=0$ if $j=0$ and $a_t=0$
if $|tp+j|>d_N$.

\begin{Theorem} \label{decomposition}
If $|tp+j|\le d_N$ then $0<a_t=\dim C_{d_N+tp+j}$.
Moreover,  $b=2^{[N/2]}\gamma _N(j)$, where $\gamma _N(\cdot )$ is as in
Lemma \ref{Bl-Cl}.
\end{Theorem}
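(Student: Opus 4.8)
The plan is to extract the multiplicities $a_t$ and $b$ from cohomology data that has already been computed. First I would recall the setup: from Corollary \ref{dir-sum-lb} and the preceding corollary, the graded $S$-module $\bigoplus_{t}H^1(Q_n,F^*\psi_1(t-p))$ is isomorphic to $B$, and $H^1(Q_n,F_*(\O_{Q_n}(d_N+j))\otimes\psi_1(i))=H^1(Q_n,F^*\psi_1(d_N+j+ip))$. Since $\psi_1\otimes\psi_1$-type cohomology of spinor bundles against $\psi_1$ is controlled by Proposition \ref{psixspin} ($h^1(\psi_1\otimes\Sigma(t))=2\rk\Sigma$, concentrated in $t=0$) while $h^1(\psi_1(t))=1$ only at $t=-1$, tensoring the decomposition $F_*(\O_{Q_n}(d_N+j))=\bigoplus\O_{Q_n}(-t)^{a_t}\oplus S_n(1)^b$ with $\psi_1(i)$ and summing $h^1$ over all $i$ gives
$$\sum_{i\in\ZZ}\dim B_{d_N+j+ip}=\sum_t a_t + 2^{[N/2]}\cdot 2\cdot\frac{b}{?}\,,$$
more precisely $\sum_t a_t + b\cdot\big(\text{contribution of }S_n\big)$; here I must be careful whether $S_n$ is one spinor bundle or a sum of two, but in either case the $\psi_1$-contribution of $S_n(1)^b$ is $2^{[N/2]+1}$ times an appropriate count — I would pin this down using that $h^1(\psi_1\otimes S_n(t))$ is supported only at $t=0$. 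Meanwhile Lemma \ref{sum-B} tells me this sum equals $p^n+2^n\gamma_N(j)$ (with $l_0=j$ since $l=d_N+j$). So the total count $\sum_t a_t$ (of line-bundle summands) plus the spinor contribution is determined.

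Next I would separately count $\sum_t a_t$. The number of line-bundle summands of $F_*(\O_{Q_n}(d_N+j))$ equals $h^0(F_*(\O_{Q_n}(d_N+j)))$ minus... no — better: $\sum_t a_t$ is the total rank contributed by line bundles, and $\rk F_*(\O_{Q_n}(d_N+j))=p^n$, while the spinor part $S_n(1)^b$ has rank $b\cdot\rk S_n$. Hence $\sum_t a_t = p^n - b\cdot\rk S_n$. Comparing with the previous paragraph isolates $b$: the $p^n$ cancels and one is left with $2^n\gamma_N(j)$ equated to (spinor $\psi_1$-contribution minus spinor rank contribution), which after bookkeeping yields $b\cdot 2^{[N/2]} = 2^n\gamma_N(j)/(\text{const})$ — I expect it to come out to $b=2^{[N/2]}\gamma_N(j)$ once the constant is correctly $2^{n-[N/2]}$... hmm, with $N=n+1$ this needs $n-[N/2]=[N/2]$ which holds for odd $n$ but not even; so the cleaner route is to use Proposition \ref{C=B} and Lemma \ref{Hilb-B} directly. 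Specifically, I would argue that $H^1(Q_n,F_*(\O_{Q_n}(d_N+j))\otimes\Sigma(t))=H^1(Q_n,F^*\Sigma(d_N+j+tp))$, and by Theorem \ref{vanishing}/Corollary \ref{non-vanishing} this is nonzero for exactly one value of $t$ (when $j\ne0$), matching $b>0$ iff $j\ne0$; and $h^1$ of that group, computed via the $S$-module structure and Lemma \ref{Bl-Cl}, equals $2^{[N/2]}\gamma_N(j)$ — since by \cite[Theorem 4]{BM}-type reasoning the relevant graded module for $F^*\Sigma$ cohomology is built from $B$ and $C$ with the symmetry $\dim C_{2d_N-i}=\dim C_i$.

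For the multiplicities $a_t$, I would use the short exact sequences (\ref{(i)})–(\ref{(iii)}) of the spinor bundles themselves: tensoring $F_*(\O_{Q_n}(d_N+j))$ with the sequence $0\to\Sigma\to\O_{Q_n}^{2^{m+1}}\to\Sigma(1)\to0$ and taking $H^0$, the line-bundle summand $\O_{Q_n}(-t)$ contributes to $h^0(F_*(\O_{Q_n}(d_N+j))(t'))$ for $t'\ge t$, and these jumps are governed by $\dim A$ versus $\dim B$, $\dim C$. Concretely $a_t = h^0(F_*(\O_{Q_n}(d_N+j))\otimes\O_{Q_n}(t)) - h^0(F_*(\O_{Q_n}(d_N+j))\otimes\O_{Q_n}(t-1)) - (\text{spinor contribution to this difference})$, and $h^0(F_*(\O_{Q_n}(d_N+j))(t))=h^0(\O_{Q_n}(d_N+j+tp))=\dim A_{d_N+j+tp}$ by the identification of $A$ with the homogeneous coordinate ring truncation. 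After subtracting the spinor term (supported, by the above, at one value $t=t_0$, contributing via $h^0(S_n(1+t))$), the first differences of $\dim A_i$ along the arithmetic progression collapse — using the exact sequence $0\to C_{i}\to A_{i+p}\to B_{i+p}\to 0$ and Proposition \ref{C=B} — to exactly $\dim C_{d_N+tp+j}$. Positivity of $a_t$ for $|tp+j|\le d_N$ then follows from the last sentence of Lemma \ref{Hilb-B}: $C_i\ne0$ iff $0\le i\le 2d_N$, i.e. iff $|tp+j|\le d_N$.

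I expect the main obstacle to be the bookkeeping that separates the spinor contribution from the line-bundle contribution cleanly — in particular handling the even-dimensional case where $S_n=\Sigma_+\oplus\Sigma_-$ has rank $2^{[N/2]}$ (matching the odd case numerically, since $\rk\Sigma_\pm=2^{m-1}$ and $\rk\Sigma=2^m$ with the right $N$), so that the formula $b=2^{[N/2]}\gamma_N(j)$ is uniform. The identity ingredient is that $\dim C_{d_N+i}$, summed against $(-1)$'s over residue classes mod $p$, reproduces $2^N\gamma_N(\cdot)$ up to the symmetrization already built into Lemma \ref{Bl-Cl} and Lemma \ref{sum-B}; everything else is a matter of chasing the three exact sequences (\ref{(i)})–(\ref{(iii)}) and the ses $0\to C_i\to A_{i+p}\to B_{i+p}\to0$ through $H^0$ and $H^1$.
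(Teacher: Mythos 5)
Your overall strategy --- pairing the putative decomposition against $\psi_1$, using Proposition \ref{psixspin}, the identification $\bigoplus_t H^1(F^*\psi_1(t-p))\simeq B$ and Lemma \ref{sum-B} --- is exactly the paper's, but you derail yourself at the two decisive points. First, the rank-versus-$\psi_1$ comparison for $b$ that you set up actually closes: the exponent you need is $n-[n/2]$, not $n-[N/2]$, because $\rk S_n=2^{[n/2]}$ in \emph{both} parities (it is $2\cdot 2^{m-1}$ for $n=2m$ and $2^m$ for $n=2m+1$), while the $\psi_1$-contribution of one copy of $S_n(1)$ is $2\rk S_n=2^{[n/2]+1}$. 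Subtracting the rank identity $p^n=\sum_t a_t+b\rk S_n$ from the summed $\psi_1$ identity $\sum_i\dim B_{d_N+j+ip}=\sum_t a_t+2b\rk S_n$ and invoking Lemma \ref{sum-B} gives $b\,2^{[n/2]}=2^n\gamma_N(j)$, and $n-[n/2]=[(n+1)/2]=[N/2]$ for every $n$, so $b=2^{[N/2]}\gamma_N(j)$. The ``cleaner route'' you substitute --- computing $h^1(F^*\Sigma(\cdot))$ from a graded module ``built from $B$ and $C$'' --- has no basis in the paper (no such module structure is established for spinor bundles) and would itself require a new argument.

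Second, the $h^0$ first-difference scheme for $a_t$ fails as written: $h^0(\O_{Q_n}(d))$ is a graded piece of $S/(\sum x_i^2)$, not of the Artinian ring $A$, and the first difference of $t\mapsto\sum_{t'\le t}a_{t'}h^0(\O_{Q_n}(t-t'))$ is $\sum_{t'\le t}a_{t'}h^0(\O_{Q_{n-1}}(t-t'))$, which does not isolate $a_t$. You already hold the correct tool but only use it in summed form: the \emph{individual} identity $h^1(F_*(\O_{Q_n}(d_N+j))\otimes\psi_1(t-1))=\dim B_{d_N+tp+j}$, combined with the fact that $\psi_1$ detects a summand $\O_{Q_n}(-t')$ only at $t=t'$ (with multiplicity one) and the spinor summand only at $t=0$, gives $a_t=\dim B_{d_N+tp+j}$ for $t\ne 0$, which equals $\dim C_{d_N+tp+j}$ by Lemma \ref{Hilb-B} since $d_N+tp+j$ then lies outside $(d_N,d_N+p)$. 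For $t=0$ one gets $a_0=\dim B_{d_N+j}-2^{[n/2]+1}b=\dim B_{d_N+j}-2^{N}\gamma_N(j)=\dim C_{d_N+j}$ by the displayed identity $\dim B_{l_1}-\dim C_{l_1}=\sum_{j\in\ZZ}(-1)^j\dim A_{l_1-jp}$ in the proof of Lemma \ref{sum-B}; this step is missing from your sketch. Positivity is then indeed the last sentence of Lemma \ref{Hilb-B}, as you say.
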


\begin{proof} By Proposition \ref{psixspin} we have
$$h^1(F_*(\O_{Q_n}(d_N+j))\otimes \psi_1(t-1))=\left\{
\begin{array}{cl}
a_{0}+2^{[n/2]+1}b&\quad \hbox{ if $t=0$,}\\
a_t&\quad \hbox{ if $t\ne 0$.}\\
\end{array}
\right.$$
On the other hand, we have
$$h^1(F_*(\O_{Q_n}(d_N+j))\otimes
\psi_1(t-1))=h^1(F^*\psi_1(d_N+(t-1)p+j)) =\dim B_{d_N+tp+j},$$ so
$a_t=\dim B_{d_N+tp+j}=\dim C_{d_N+tp+j}$ for $t\ne 0$.  Comparing
ranks in the decomposition we get $\sum _{t\in \ZZ}
a_t+2^{[n/2]}b=p^n$. Therefore
$$\sum \dim B_{d_N+ip+j}=\sum _{t\in \ZZ}
a_t+2^{[n/2]+1}b=2^{[n/2]}b+p^n$$ and Lemma \ref{sum-B} implies that
$b=2^{[N/2]}\gamma _N(j)$. By the proof of Lemma \ref{sum-B}
$$a_0=\dim B_{d_N+j}-2^N\gamma _N(j)=\dim B_{d_N+j}- (\dim
B_{d_N+j}-\dim C_{d_N+j})=\dim C_{d_N+j},$$
which finishes the proof.
\end{proof}

\medskip

Since $F_*(\O_{Q_n}(j+tp))\simeq F_*(\O_{Q_n}(j))\otimes \O_{Q_n}(t)$,
the above theorem gives the decomposition for all Frobenius push
forwards of line bundles on $Q_n$ ($n\ge 3$).

We can also compute the decomposition of $F_*(S_n(j))$ along the
following lines.  By (\ref{seq2}) and (\ref{seq4}) we have an exact
sequence
$$
0\to H^1(S_n\otimes F^*S_n (t))\to H^1(F^*S_n (t))^{\oplus 2^{[N/2]}} \to
H^1(\Sigma \otimes F^*S_n (t+1))\to H^2(S_n \otimes S_n (t))
$$
for $t\le d_N-1$. By Theorem \ref{2-vanishing} the last cohomology
group vanishes for $t\ge d_N-p+1$. By the same theorem and the proof
of Corollary \ref{2-non-vanishing} we also have an exact sequence
$$0\to H^1(S_n\otimes F^*S_n(d_N-p+1))\to H^1(S_n\otimes
F^*S_n(d_N-p+1)|_{Q_{n-1}})\to H^1(S_n\otimes F^*S_n(d_N-p+1))^*\to
0.$$ Together with Lemmas \ref{tensor-spin} and \ref{2-spin-duality}
this is sufficient to determine the required decomposition.  By
induction, this also gives decomposition of $F^s_*(\O_{Q_n}(j))$.  We
skip the actual computation as it is long and it will not be used in
the following.

\begin{Corollary} \label{strong-splitting}
$F^s_*(\O_{Q_n}(j))$ contains $\O_{Q_n}(-t)$ as a direct summand if
and only if $0\le tq+j\le n(q-1)$.
\end{Corollary}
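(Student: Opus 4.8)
The plan is to prove both implications by induction on $s$, using Proposition \ref{splitting}(1) for the "only if" direction and a cohomological non-vanishing argument built from Theorem \ref{decomposition} for the "if" direction. For $s=1$ the statement is essentially a reformulation of Theorem \ref{decomposition}: writing $j+tq = d_N + (tp+j')$ with $0 \le j' < p$, the summand $\O_{Q_n}(-t)$ appears with multiplicity $\dim C_{d_N + tp + j'}$, and by Lemma \ref{Hilb-B} this is nonzero precisely when $0 \le d_N + tp + j' \le 2d_N = n(p-1)$, i.e.\ when $0 \le tp + j \le n(p-1)$. So I would first dispose of $s=1$ by quoting Theorem \ref{decomposition} and Lemma \ref{Hilb-B}.

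For the inductive step, suppose the claim holds for $s$ and consider $F^{s+1}_* = F_* \circ F^s_*$ (using $F^{s+1}_*(\O_{Q_n}(j)) \simeq F_*(F^s_*(\O_{Q_n}(j)))$, after twisting to reduce to $0 \le j < p^{s+1}$). The "only if" direction is immediate from Proposition \ref{splitting}(1) with $q = p^{s+1}$. For "if", suppose $0 \le tp^{s+1} + j \le n(p^{s+1}-1)$; I want to produce $\O_{Q_n}(-t)$ inside $F^{s+1}_*(\O_{Q_n}(j))$. Write $F^s_*(\O_{Q_n}(j)) = \bigoplus \O_{Q_n}(a_i) \oplus \bigoplus S_n(b_i)$ (by Theorem \ref{ACM-Q}). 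Then $F_*(\O_{Q_n}(a_i))$ contributes line bundles $\O_{Q_n}(-t')$ for all $t'$ with $0 \le t'p - a_i \le n(p-1)$ (the $s=1$ case just proved), and $F_*(S_n(b_i))$ contributes line bundles $\O_{Q_n}(-t')$ for $1 \le t'p - b_i \le n(p-1)$ by Proposition \ref{splitting2}(2) together with the non-vanishing from Corollary \ref{2-non-vanishing}. So it suffices to show that the union of these ranges of $t'$, as $(a_i)$ and $(b_i)$ range over the actual summands of $F^s_*(\O_{Q_n}(j))$, covers every $t$ with $0 \le tp^{s+1}+j \le n(p^{s+1}-1)$.

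The cleanest way to see this coverage, and the step I expect to be the main obstacle, is to argue directly with cohomology rather than chasing the combinatorics of the $a_i, b_i$. Namely: $\O_{Q_n}(-t)$ is a direct summand of the ACM bundle $F^{s+1}_*(\O_{Q_n}(j))$ iff a certain cohomological "extremal slot" is nonzero — concretely, by the argument in Proposition \ref{splitting2} and Theorem \ref{decomposition}, one detects line-bundle summands via $H^1$ of twists by $\psi_1$ and via the graded module $B$ from Section 2 (applied at level $s+1$). So I would instead set up the analogue of Corollary \ref{dir-sum-lb} and Theorem \ref{decomposition} at level $s+1$: the multiplicity of $\O_{Q_n}(-t)$ in $F^{s+1}_*(\O_{Q_n}(j))$ equals $\dim C^{(s+1)}_{d_{N,s+1} + \ldots}$ for the corresponding level-$(s+1)$ algebra, and invoke that $C^{(s+1)}_i \ne 0$ iff $0 \le i \le n(p^{s+1}-1)$ — the level-$q$ analogue of the last assertion of Lemma \ref{Hilb-B}, which follows from the $s$-independent formulation of Lemmas \ref{p^n}, \ref{sum-B} and the fact that $\dim A_i \ne 0$ iff $0 \le i \le N(q-1)+1$. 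The delicate point is that Theorem \ref{decomposition} and Lemma \ref{Hilb-B} are stated only for $s=1$; so the real work is checking that the key input — Proposition \ref{C=B}, i.e.\ $C_d = B_d$ for $d \le d_N$ — either extends to general $q$ or can be bypassed because for the present statement we only need \emph{non-vanishing} of $\dim C_i$ on the full interval, which is governed by the Gorenstein symmetry of $A$ and the containment $C_i \subset A_{i+q}$ already noted before Proposition \ref{C=B}. I would therefore phrase the proof as: $\O_{Q_n}(-t)$ occurs in $F^{s+1}_*(\O_{Q_n}(j))$ iff the corresponding graded piece of $C$ (at level $s+1$) is nonzero, and that piece is nonzero on exactly the claimed range by the dimension count $\sum_{i} \dim A_{i+jq} $ and Gorenstein duality, with no need for the finer Hilbert-function identities.
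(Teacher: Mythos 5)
Your ``only if'' direction and the base case $s=1$ are exactly the paper's (Proposition \ref{splitting} plus Theorem \ref{decomposition}). The gap is in the inductive step. You correctly set up the combinatorial coverage argument---push the line-bundle summands of $F^s_*(\O_{Q_n}(j))$ forward once more and check that the resulting twists cover the required range---but then abandon it for a ``cohomological'' argument that does not go through with the tools available. Identifying the multiplicity of $\O_{Q_n}(-t)$ in $F^{s+1}_*(\O_{Q_n}(j))$ with a graded piece of a level-$(s+1)$ algebra $C$ is precisely the analogue of Theorem \ref{decomposition} for $q=p^{s+1}$, and its proof needs two inputs that are unavailable or false for $s+1\ge 2$: (i) Proposition \ref{C=B}, hence Proposition \ref{diff}, the hard combinatorial core of Section~2, proved only for $q=p$ (the paper explicitly restricts to $s=1$ just before Proposition \ref{C=B}); and (ii) the fact that at most one twist of a spinor bundle occurs in the decomposition, which genuinely fails for $s\ge 2$ (see the proof of Corollary \ref{t2}). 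Your proposed bypass---``we only need non-vanishing of $\dim C_i$''---does not rescue this: even granting the Br\"uckmann--M\"uller identification at level $s+1$, the $\psi_1$-twisted $H^1$ computation gives $a_t+2^{[n/2]+1}b_t=\dim B^{(s+1)}_{\bullet}$, where $b_t$ is the multiplicity of $S_n(-t)$; non-vanishing of the right-hand side shows only that $\O_{Q_n}(-t)$ \emph{or} $S_n(-t)$ occurs, and since several distinct spinor twists really do occur for $s\ge 2$, you cannot conclude $a_t>0$ this way.

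The fix is to finish the combinatorial route you started, which is what the paper does. Writing $F^{s+1}_*=F^s_*\circ F_*$ (the paper's order; your order $F_*\circ F^s_*$ works equally well), given $0\le tp^{s+1}+j\le n(p^{s+1}-1)$ one only has to produce a single integer $l$ with $0\le lp+j\le n(p-1)$, so that $\O_{Q_n}(-l)$ is a summand of $F_*(\O_{Q_n}(j))$ by the case $s=1$, and $0\le tp^{s}-l\le n(p^{s}-1)$, so that $\O_{Q_n}(-t)$ is a summand of $F^{s}_*(\O_{Q_n}(-l))$ by the induction hypothesis. This is an elementary verification that two overlapping intervals---one with integer endpoints, the other of length $n(p-1)/p\ge 2$---contain a common integer; no information about the spinor summands of the intermediate push-forward is needed at all. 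The step you flagged as ``the main obstacle'' is in fact the easy part, and the step you called ``cleanest'' is where the argument breaks.
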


\begin{proof}
By Proposition \ref{splitting} we need only to show that some line
bundles appear in the decomposition. The proof is by induction on
$m$. For $s=1$ the required assertion follows from Theorem
\ref{decomposition}.  Assume that $0\le tp^{s+1}+j\le n(p^{s+1}-1)$
for some $t$.  Then $0\le tp^{s}+j/p\le n(p^{s}-1)+n-n/p$. Therefore
there exist an integer $l$ such that $0\le tp^{s}-l\le n(p^{s}-1)$ and
$-j/p\le l\le n-(n+j)/p$.  Then $\O_{Q_n}(-t)$ is a direct summand of
$(F^m)_*(\O_{Q_n}(-l))$ and $\O_{Q_n}(-l)$ is a direct summand of
$F_*(\O_{Q_n}(j))$.
\end{proof}

\begin{Corollary}\label{t1}
$F_*\O_{Q_n}$ is a tilting bundle if and only if $p> n$.
\end{Corollary}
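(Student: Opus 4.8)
The plan is to deduce Corollary \ref{t1} from the structural results already assembled: Corollary \ref{quasi-exc} (that $F_*\O_{Q_n}$ is quasi-exceptional), the fact that $F_*\O_{Q_n}$ is an ACM bundle, so by Theorem \ref{ACM-Q} it is a direct sum of line bundles and twisted spinor bundles, and Theorem \ref{decomposition}, which describes that decomposition explicitly. Since $F_*\O_{Q_n}$ is always quasi-exceptional, the only question is whether it Karoubian-generates $D^b(Q_n)$ and whether its endomorphism algebra has finite global dimension. The endomorphism algebra of a direct sum of exceptional-type bundles on a quadric is known to have finite global dimension once the summands form (up to twist) a subset of a full exceptional collection, so the real content is the generation condition, which forces the decomposition to involve \emph{all} the line bundles $\O_{Q_n}, \O_{Q_n}(-1), \dots, \O_{Q_n}(-n+1)$ together with exactly one twist of the spinor bundle(s) — i.e.\ the summands must constitute Kapranov's full exceptional collection on $Q_n$.

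First I would apply Theorem \ref{decomposition} with $j=0$: then $b=0$, no spinor bundle appears, and $F_*\O_{Q_n}=\bigoplus_t \O_{Q_n}(-t)^{a_t}$ with $a_t=\dim C_{d_N+tp}>0$ exactly for $|tp|\le d_N=n\frac{p-1}{2}$. The range of $t$ with $a_t\neq 0$ is therefore $|t|\le \frac{n}{2}\cdot\frac{p-1}{p}<\frac n2$, so $t$ runs over the integers in $(-\frac n2,\frac n2)$. If $p>n$ then $F_*\O_{Q_n}$ is a direct sum of line bundles by Corollary \ref{dir-sum-lb} (since $t=0$ means $t-d_N\equiv -d_N$, and here $j=0$ so $d_N$ is divisible by $p$ precisely when... wait — more directly, the summands are $\O_{Q_n}(a)$ for $a$ ranging over an interval), and I must check this interval is exactly $\{0,-1,\dots,-(n-1)\}$ or a translate by a line bundle whose summands, suitably twisted, give Kapranov's collection; then generation and finite global dimension follow from the standard theory of full exceptional collections on quadrics (Kapranov). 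Conversely, if $p\le n$ I would show the collection of distinct summands has too few members — the number of distinct line-bundle twists appearing is at most $n$ (the integers in an interval of length $<n$), but Kapranov's full exceptional collection on $Q_n$ has $n+1$ objects for $n$ even and... actually $n$ for $n$ odd but including a spinor bundle — so when $p\le n$ the bundle $F_*\O_{Q_n}$ cannot Karoubian-generate $D^b(Q_n)$, hence is not tilting.

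The cleanest route for the converse direction is numerical: the classes of the indecomposable summands of $F_*\O_{Q_n}$ span a subgroup of $K_0(Q_n)$, and a tilting bundle must span all of $K_0(Q_n)\otimes\QQ$, which has rank $n+1$ if $n$ is even and $n+1$ as well if $n$ is odd (counting the spinor bundle class). When $j=0$ no spinor class appears, so we need at least $n+1$ distinct line bundles $\O_{Q_n}(-t)$; but $t$ ranges over at most the $n-1$ or fewer integers with $|t|<\frac n2$ — wait, that gives far fewer than $n+1$ — so in fact $F_*\O_{Q_n}$ with $s=1$ can span $K_0$ only when $p$ is large enough that we may instead twist: the point is $F_*\O_{Q_n}(d_N)$ is the right normalization. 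I would recompute: the summands of $F_*\O_{Q_n}$ itself are $\O_{Q_n}(-t)$ with $a_t=\dim C_{d_N+tp+j}$ where here we want $j$ chosen so $d_N+j\equiv 0$, i.e.\ the decomposition of $F_*\O_{Q_n}=F_*(\O_{Q_n}(d_N+j))\otimes\O_{Q_n}(-\lceil d_N/p\rceil)$-type bookkeeping. The main obstacle is precisely this indexing — translating Theorem \ref{decomposition}, which is stated for $F_*(\O_{Q_n}(d_N+j))$, into a statement about $F_*\O_{Q_n}$, and then matching the resulting multiset of summands against Kapranov's exceptional collection. Once the count of distinct summands is pinned down, the equivalence ``tilting $\iff p>n$'' is immediate: for $p>n$ the distinct summands are exactly (a twist of) $\{\O, \O(1),\dots,\O(n-1)\}$, possibly with one spinor twist, which is Kapranov's collection up to twist, giving generation and finite global dimension; for $p\le n$ there are strictly fewer than $n+1$ indecomposable summand-classes, so $K_0$ is not generated and the bundle fails to be tilting.

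Finally I would note the degenerate small-$n$ cases ($n=3,4$) are consistent with and recover \cite{Sa1} as remarked after Theorem \ref{main}, and that the inequality $p>n$ is sharp: at $p=n$ (when $n$ is prime; more generally $p\le n$) the push-forward still fails to generate. The write-up is short: invoke Theorem \ref{decomposition} for the explicit decomposition, invoke Corollary \ref{quasi-exc} for quasi-exceptionality, then argue generation via the $K_0$-rank / Kapranov-collection comparison in both directions, and cite the standard fact that the endomorphism algebra of (a twist of) Kapranov's full exceptional collection has finite global dimension. I expect the $K_0$-counting and the careful identification of the interval of twists to be where all the actual work lies.
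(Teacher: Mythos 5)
Your overall strategy is the paper's: read off the indecomposable summands of $F_*\O_{Q_n}$ from Theorem \ref{decomposition}, get quasi-exceptionality from Corollary \ref{quasi-exc}, and decide generation by comparing the set of summands with Kapranov's collection (equivalently, counting classes in $K_0$). But there is a genuine gap at exactly the point you flag as ``where all the actual work lies'': you never carry out the reindexing, and your one concrete attempt at it is wrong. Applying Theorem \ref{decomposition} with $j=0$ computes the decomposition of $F_*(\O_{Q_n}(d_N))$, not of $F_*\O_{Q_n}$; the conclusion you draw from it --- that $b=0$ and no spinor summand appears in $F_*\O_{Q_n}$ --- is false whenever $p\nmid n$, and in particular for every $p>n$, which is precisely the case where the spinor summand is needed for generation. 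The correct bookkeeping (this is the content of the paper's proof, and is also recorded as Corollary \ref{strong-splitting} for $s=1$) is: $\O_{Q_n}(-i)$ is a summand of $F_*\O_{Q_n}$ if and only if $0\le ip\le n(p-1)$, i.e.\ $0\le i\le n-n/p$, and by Proposition \ref{splitting2} at most one twist of $S_n$ occurs, occurring iff $p\nmid d_N$ iff $p\nmid n$. From this the three cases fall out: for $p>n$ one gets $\O_{Q_n},\dots,\O_{Q_n}(-n+1)$ plus one twist of $S_n$ (tilting); for $p=n$ one gets only line bundles (no generation); for $p<n$ the inequality $n(p-1)<p(n-1)$ forces $i\le n-2$, so at most $n-1$ line bundles plus one twist of $S_n$ (too few classes).

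Two further points. First, your $K_0$ count for even $n$ is off: $K_0(Q_n)\otimes\QQ$ has rank $n+2$ when $n$ is even (two spinor classes), not $n+1$; the converse count still closes ($(n-1)+2=n+1<n+2$), but as written your argument would not rule out generation in the even case. Second, your forward direction leans on ``the endomorphism algebra of a subset of a full exceptional collection has finite global dimension''; that is acceptable at the level of detail of the paper (which simply asserts ``so it is tilting''), but note that what you actually have is the $n$ line bundles together with $S_n(-t)$ for one \emph{specific} $t$, so you should at least remark that the exact sequences (\ref{(i)})--(\ref{(iii)}) let you trade $S_n(-t)$ for $S_n$ inside the subcategory Karoubian-generated by the line bundles and any one spinor twist.
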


\begin{proof}
By Theorem \ref{decomposition} $F_*\O_{Q_n}$ contains $\O _{Q_n}(-i)$
as a direct summand if and only if $0\le ip\le n(p-1)$. Moreover,
$F_*\O_{Q_n}$ contains at most one twist of $S_n$. If $p< n$ then
$i\le n-2$ as $n(p-1)<p(n-1)$. This implies that $F_*\O_{Q_n}$
Karoubian generates a proper subcategory of $D^b(Q_n)$ generated by at
most $(n-1)$ line bundles and one twist of spinor bundles.  If $p=n$
then $F_*\O_{Q_n}$ is a direct sum of line bundles so it does not
generate $D^b(Q_n)$. On the other hand, if $p> n$ then $F_*\O_{Q_n}$
contains as direct summands $\O_{Q_n},\dots , \O_{Q_n}(-n+1)$ and one
twist of $S_n$, so it is tilting.
\end{proof}

\begin{Corollary}\label{t2}
If $n=2m$, $m\ge 2$ and $s\ge 2$ then $F^s_*\O_{Q_n}$ is
quasi-exceptional only if $m=2$, $p=3$ and $s=2$. In this case
$F^s_*\O_{Q_n}$ is also tilting.
\end{Corollary}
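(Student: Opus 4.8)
The plan is to analyze precisely which line bundles and which twist of a spinor bundle occur as direct summands of $F^s_*\O_{Q_n}$, using the splitting results available for $s=1$ together with the iteration scheme indicated after Theorem \ref{decomposition}. First I would treat the case $s=1$ as covered by Corollary \ref{t1} (where $n$ even forces $p>n$, i.e.\ $p\ge n+1$), so the genuinely new content is $s\ge 2$. The key structural input is that $F^s_*\O_{Q_n}$ is ACM, hence by Theorem \ref{ACM-Q} decomposes as $\bigoplus \O_{Q_n}(-t)^{a_t}\oplus \bigoplus \Sigma_i(-t)^{b_{i,t}}$, and quasi-exceptionality is governed by vanishing of $\ext^{>0}$ among the summands. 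The obstruction to quasi-exceptionality can only come from spinor summands: by Lemma \ref{tensor-spin}, two twisted spinor summands $\Sigma_1(-t_1)$ and $\Sigma_2(-t_2)$ in the decomposition produce a nonzero $\ext^i$ as soon as some $H^1(\Sigma_1\otimes\Sigma_2(\ast))$ fails to vanish at the relevant twist, and more importantly a spinor summand at twist $-t$ together with a line-bundle summand $\O(-t')$ at an incompatible position, or a spinor summand combined with itself at two different twists, kills quasi-exceptionality via the last part of Lemma \ref{tensor-spin} (for $0<i<n$ there exist spinor bundles with $\ext^i(\Sigma_1(i),\Sigma_2)\ne 0$).

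The second step is to locate the spinor twists. By Proposition \ref{splitting2}(1), $F_*(\O_{Q_n}(j))$ contains exactly one twist $S_n(-t)$ of a spinor bundle, and it contains it precisely when $d_N-p+1\le tp+j\le d_N-1$; by Theorem \ref{decomposition} its multiplicity is $2^{[N/2]}\gamma_N(j')$ where $j'\equiv j\pmod p$, $0\le j'<p$, and this is nonzero exactly when $j'\not\equiv 0$. Iterating via the scheme sketched after Theorem \ref{decomposition}: $F^s_*\O_{Q_n}=F_*\big(F^{s-1}_*\O_{Q_n}\big)$, and $F^{s-1}_*\O_{Q_n}$ contains the line bundles $\O_{Q_n}(-l)$ for $0\le lp^{s-1}\le n(p^{s-1}-1)$ (Corollary \ref{strong-splitting}) together with one twist of $S_n$ provided $s-1\ge 2$ (or none if $s-1=1$ and the relevant $\gamma$ vanishes). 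Pushing forward, each $F_*(\O_{Q_n}(-l))$ contributes its own spinor twist whenever $-l$ sits in the prescribed window mod $p$, so for $s\ge 2$ one generically gets spinor summands at \emph{several} distinct twists $-t$, and then Lemma \ref{tensor-spin} gives $\ext^i\ne 0$ between two of them (or between a spinor and a line bundle) for some $0<i<n$. The task is to show this bad overlap is unavoidable \emph{unless} $n=4$, $p=3$, $s=2$.

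The third step is the exceptional case $n=4$, $p=3$, $s=2$. Here $N=5$, $d_N=n(p-1)/2=4$, $q=9$, and $n(q-1)=32$. I would compute the full decomposition of $F_*\O_{Q_4}$ explicitly from Theorem \ref{decomposition} (the relevant $\gamma_5(j)$ for $j=0,1,2$), then push forward each summand once more and check, summand by summand, that all spinor twists that appear coincide (landing at a single value of $-t$), that the line-bundle twists fill out exactly $\O_{Q_4},\dots,\O_{Q_4}(-3)$ with the one spinor twist completing a full exceptional collection, and that no bad $\ext^i$ arises — this also yields tilting, since a full strong exceptional collection of the expected length Karoubian generates $D^b(Q_4)$ and has finite global dimension. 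The main obstacle I anticipate is precisely the bookkeeping in step two: showing cleanly that for $s\ge2$ and $(n,p,s)\ne(4,3,2)$ the iterated push-forward always produces two spinor summands at different twists (or a spinor incompatible with a line bundle), which amounts to a careful analysis of the windows $d_N-p+1\le tp+j\le d_N-1$ modulo $p$ as $j$ ranges over the shifts coming from the line-bundle summands of $F^{s-1}_*\O_{Q_n}$, and using $n$ even (so $d_N=n(p-1)/2$ is a multiple of $(p-1)/2$ but the window has length $p-1<\,$enough room) to force the overlap. The evenness of $n$ enters through $\gamma_N$ being given by the $F_k$ (rather than $G_k$) formula of Lemma \ref{Bl-Cl}, which is where the arithmetic obstruction forcing $(4,3,2)$ ultimately comes from.
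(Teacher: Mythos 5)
Your proposal identifies the correct mechanism --- the only real obstruction to quasi-exceptionality is a pair of spinor summands $S_n(-t_1)$, $S_n(-t_2)$ with $t_1\ne t_2$, which yields a nonzero $\ext^{|t_1-t_2|}$ by Lemma \ref{tensor-spin} --- and the correct tools (Proposition \ref{splitting2}, Theorem \ref{decomposition}, Corollary \ref{strong-splitting}, and the iteration $F^s_*=F_*\circ F^{s-1}_*$). But the entire mathematical content of the corollary is the claim that such a pair is unavoidable for every $(n,p,s)$ with $n=2m$, $m\ge 2$, $s\ge 2$ except $(4,3,2)$, and this is precisely the step you defer as ``bookkeeping'' and assert only ``generically''. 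As written, the proposal does not prove the statement. The paper closes this gap with a short explicit argument: $F_*\O_{Q_n}$ contains $\O_{Q_n}$, $\O_{Q_n}(-1)$ and $\O_{Q_n}(-l_0)$ with $l_0=[n-n/p]$ as direct summands; one of $F_*\O_{Q_n}$, $F_*(\O_{Q_n}(-1))$ contributes $S_n(-m+\lceil m/p\rceil)$, while $F_*(\O_{Q_n}(-l_0))$ contributes $S_n(-m)$, the hypothesis $m>2$ or $p>3$ being exactly what guarantees $l_0>m$; hence $F^2_*\O_{Q_n}$, and a fortiori $F^s_*\O_{Q_n}$, carries two distinct spinor twists. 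The residual case $m=2$, $p=3$, $s\ge 3$ is then settled by the same device: $F^2_*\O_{Q_4}$ has only the summands $\O_{Q_4},\dots,\O_{Q_4}(-3)$ and $S_4(-1)$ (whence the tilting assertion for $s=2$), but $F_*(\O_{Q_4}(-3))$ contains $S_4(-2)$, so $F^3_*\O_{Q_4}$ again carries two distinct spinor twists. You would need to supply an argument of this concreteness for the proof to be complete.

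Two smaller inaccuracies in your outline. First, a spinor summand together with a line-bundle summand never obstructs quasi-exceptionality in degrees $0<i<n$, because spinor bundles are ACM (Theorem \ref{coh-spin}) and the possible twists are bounded by Proposition \ref{splitting}; the obstruction is purely spinor-versus-spinor. Second, the role of the parity of $n$ is not the $F_k$ versus $G_k$ dichotomy of Lemma \ref{Bl-Cl} --- that only governs the multiplicity $b$ of the spinor summand --- but the arithmetic of $d_N$ modulo $p$ in the window $d_N-p+1\le tp+j\le d_N-1$ of Proposition \ref{splitting2}: for odd $n$ with $p\ge n$ every relevant push-forward places its spinor summand at the same twist $-m$ (this is what makes Corollary \ref{t3} work), whereas for even $n$ the window forces the twist to move as $j$ ranges over the line-bundle summands of $F^{s-1}_*\O_{Q_n}$.
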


\begin{proof}
First, let us assume that $m>2$ or $p>3$. Note that $F_*\O_{Q_n}$ or
$F_*\O_{Q_n}(-1)$ contain $S_n(-m+\lceil m/p \rceil)$ as a direct summand. Set
$l_0=[n-n/p]$. By assumption $l_0>m$ so $F_*\O_{Q_n}(-l_0)$ contains
$S_n(-m)$ as a direct summand.  Note that $F_*\O_{Q_n}$ contains
$\O_{Q_n}$, $\O_{Q_n}(-1)$ and $\O_{Q_n}(-l_0)$ as direct summands, so
$F^2_*\O_{Q_n}$ contains as direct summands both $S_n(-m+\lceil m/p\rceil)$ and
$S_n(-m)$. Then Lemma \ref{tensor-spin} implies that $F^2_*\O_{Q_n}$ is
not quasi-exceptional. Since $F^s_*\O_{Q_n}$ contains as a direct
summand $F^2_*\O_{Q_n}$ it is also not quasi-exceptional.

Now assume that $m=2$ and $p=3$. Then $F^2_*\O_{Q_4}$ contains as
direct summands only $\O_{Q_4},\dots , \O_{Q_4}(-3)$ and $S_4(-1)$.
But $F_*(\O_{Q_4}(-3))$ contains $S_4(-2)$ as a direct summand and hence
$F^s_*\O_{Q_4}$ is not quasi-exceptional for $s\ge 3$.
\end{proof}

\begin{Corollary} \label{t3}
Assume that $n=2m+1$ and $s\ge 2$. If $p\ge n$ then $F^s_*\O_{Q_n}$
is a tilting bundle. If $p<n$ then $F^s_*\O_{Q_n}$ is not
quasi-exceptional.
\end{Corollary}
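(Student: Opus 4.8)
The plan is to read off the complete decomposition of $F^s_*\O_{Q_n}$ into line bundles and twisted spinor bundles in each of the two cases, using the $s=1$ results of Section~4 together with the identity $F^s_*\O_{Q_n}=F_*(F^{s-1}_*\O_{Q_n})$. Recall that $\O_{Q_n}$ is ACM, hence so is every $F^s_*\O_{Q_n}$, and that for $n$ odd there is a single spinor bundle $\Sigma$. By Corollary \ref{strong-splitting}, when $p\ge n$ (so $q=p^s>n$) the line-bundle part is exactly $\bigoplus_{t=0}^{n-1}\O_{Q_n}(-t)^{a_t}$ with all $a_t>0$, while when $p<n$ one still gets a block $\O_{Q_n},\dots,\O_{Q_n}(-l_0)$ with $l_0=[\,n-n/p\,]$.

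For $p\ge n$ I would prove, by induction on $s$, that $F^s_*\O_{Q_n}\simeq\bigoplus_{t=0}^{n-1}\O_{Q_n}(-t)^{a_t}\oplus\Sigma(-\frac{n-1}{2})^{c}$ with $c>0$ for $s\ge 2$. The base case $s=1$, $p>n$ is Theorem \ref{decomposition} (translating the normalisation $\O_{Q_n}(d_N+j)$ back to $\O_{Q_n}$ one finds that the single spinor summand is $\Sigma(-\frac{n-1}{2})$), and the case $p=n$, where $F_*\O_{Q_n}$ is a sum of line bundles, feeds into $s=2$. For the inductive step one applies $F_*$ to each summand of $F^{s-1}_*\O_{Q_n}$ and uses Proposition \ref{splitting2}: writing $r=\frac{p-n}{2}\ge 0$, the inequality of Proposition \ref{splitting2}(1) for the twist $\frac{n-1}{2}$ in $F_*\O_{Q_n}(-t)$ becomes $1\le r+t\le p-1$, and since $p\ge n$ one has $r+t\in\{0,1,\dots,p-1\}$ for all $t\in\{0,\dots,n-1\}$; hence the only spinor summand that can occur is $\Sigma(-\frac{n-1}{2})$ (and none occurs exactly when $p=n$ and $t=0$). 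A parallel computation via Proposition \ref{splitting2}(2) shows $F_*\Sigma(-\frac{n-1}{2})$ again contributes only $\Sigma(-\frac{n-1}{2})$. Once this decomposition is in hand, $\{\O_{Q_n},\O_{Q_n}(-1),\dots,\O_{Q_n}(-n+1),\Sigma(-\frac{n-1}{2})\}$ is a strong exceptional collection: the intermediate $\ext$'s vanish since all the bundles are ACM; the $\ext^1$'s involving two copies of $\Sigma$ vanish by $\Sigma^*\simeq\Sigma(1)$ and Lemma \ref{tensor-spin}; and the top ($\ext^n$) groups vanish because, by Serre duality and Theorem \ref{coh-spin}, they reduce to $h^0$ of a negative twist of $\Sigma$ or of $\O_{Q_n}$, or to a negative-twist $\hom$. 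It is moreover full: by Kapranov's description of $D^b(Q_n)$ for odd $n$ the $n$ consecutive line bundles span an admissible subcategory whose orthogonal complement is one--dimensional, and the indecomposable bundle $\Sigma(-\frac{n-1}{2})$, not lying in that subcategory, generates the complement. A full strong exceptional collection has endomorphism algebra of finite global dimension, so $F^s_*\O_{Q_n}$ is tilting.

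For $p<n$ I would mimic the proof of Corollary \ref{t2}. As $n$ and $p$ are both odd, $p<n$ forces $n\ge p+2$, hence $l_0\ge p$; by Corollary \ref{strong-splitting} the line bundles $\O_{Q_n}(-t)$ for all $0\le t\le l_0$ are summands of $F_*\O_{Q_n}$, so $F^2_*\O_{Q_n}$ contains $F_*\O_{Q_n}(-t)$ as a summand for each such $t$. Proposition \ref{splitting2}(1) attaches to $t$ a unique twist $b(t)$ with $b(t+p)=b(t)+1$, and $\Sigma(-b(t))$ is a summand of $F_*\O_{Q_n}(-t)$ precisely when $t\not\equiv -d_N\pmod p$. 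If $p\nmid n$, then $t=0$ and $t=p$ both satisfy this, giving $\Sigma(-b(0))$ and $\Sigma(-b(0)-1)$ as summands of $F^2_*\O_{Q_n}$; if $p\mid n$, then $n\ge 3p$, so $l_0\ge 2p$, and one uses $t=1,1+p$ to get $\Sigma(-b(1))$ and $\Sigma(-b(1)-1)$ instead. In either case one checks the two consecutive twists lie in $\{1,\dots,n-1\}$, so by Lemma \ref{tensor-spin} the $\ext^1$ between them is nonzero; hence $F^2_*\O_{Q_n}$ is not quasi-exceptional. Since $F^2_*\O_{Q_n}$ is a direct summand of $F^s_*\O_{Q_n}=F^2_*(F^{s-2}_*\O_{Q_n})$ for every $s\ge 2$ (as $\O_{Q_n}$ is a summand of $F^{s-2}_*\O_{Q_n}$), neither is $F^s_*\O_{Q_n}$.

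The main obstacle, exactly as in Corollaries \ref{t1}--\ref{t2}, is the combinatorial bookkeeping around Propositions \ref{splitting2} and \ref{decomposition}: one must identify the precise twist of $\Sigma$ occurring at each stage of the iteration — that it stays equal to $-\frac{n-1}{2}$ when $p\ge n$, but genuinely jumps by one (as the underlying line-bundle index runs through a full residue period $p$) when $p<n$ — and to handle cleanly the degenerate residue cases, in particular $p\mid n$, where $F_*\O_{Q_n}$ itself carries no spinor summand.
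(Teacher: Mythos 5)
Your proposal is correct and follows essentially the same route as the paper: for $p\ge n$ an induction via Proposition \ref{splitting2} showing that $\Sigma(-m)$ is the only spinor twist that can ever appear (plus the line bundles $\O_{Q_n},\dots,\O_{Q_n}(-n+1)$ from Corollary \ref{strong-splitting}), and for $p<n$ the production of two consecutive twists of $\Sigma$ inside $F^2_*\O_{Q_n}$, which kills quasi-exceptionality by Lemma \ref{tensor-spin}. The only immaterial difference is your choice of the pair $\O_{Q_n}(-t),\O_{Q_n}(-t-p)$ of line-bundle summands in place of the paper's $\O_{Q_n}(-m),\O_{Q_n}(-m+\frac{p+1}{2})$.
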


\begin{proof}
It is easy to see that if $p\ge n$ then $F_*(\O_{Q_n}(-j))$ for
$0\le j\le 2m$ contain as direct summands only line bundles and
$\Sigma (-m)$. Moreover, $F_*(\Sigma (-m))$ contains as direct
summands only line bundles and $\Sigma (-m)$. This allows easily to
check that $F^s_*\O_{Q_n}$ is a tilting bundle.\

Now assume that $p<n$. Let us note that by Corollary
\ref{strong-splitting} $F_*\O_{Q_n}$ always contains $\O_{Q_n}(-t)$
for $0\le t\le m$, as $0\le mp\le n(p-1)=mp+(m+1)(p-2)+1$. But by
Proposition \ref{splitting2} $F_*(\O_{Q_n}(-m))$ contains
$\Sigma (-m)$ as a direct summand and $F_*(\O_{Q_n}(-m+\frac
{p+1}{2}))$ contains $\Sigma (-m+1)$ as a direct summand.  This
implies that $F^2_*\O_{Q_n}$, and hence $F^s_*\O_{Q_n}$, are not
quasi-exceptional.
\end{proof}

\bigskip

{\bf Acknowledgements.}

The author would like to thank A. Samokhin for providing the original
motivation for this paper by writing \cite{Sa1} and for stimulating
discussions.

The paper was written while the author's stay in the Max Planck
Institute f\"ur Mathematik in Bonn. The author would like to thank
this institution for the hospitality.

In the preparation of this paper the author used the computer algebra
system {\sc Singular} by G.-M. Greuel, G. Pfister, and H. Sch\"onemann
to check some results (e.g., Theorem \ref{vanishing} in dimensions
$3,4,5$ and in low characteristic) and in some cases to guess the
results (see Proposition \ref{C=B}).  All traces of actual computer
calculations were carefully removed from the paper.

Finally, the author would like to thank his student, Piotr
Achinger, who pointed out a gap in the previous proof of
Proposition \ref{diff-new}.

\end{document}